\newtheorem{thm}{Theorem}[subsection]
\newtheorem{prop}[thm]{Proposition}
\newtheorem{cor}{Corollary}[thm]
\newtheorem{lem}[thm]{Lemma}
\theoremstyle{definition}
\newtheorem{prob}[thm]{Problem}
\newtheorem{defn}[thm]{Definition}
\newtheorem{ex}[thm]{Example}
\newtheorem{remk}[thm]{Remark}
\newcommand{\ol}[1]{\overline{#1}}
\newcommand{\wt}[1]{\widetilde{#1}}
\newcommand{\vocab}[1]{\textbf{#1}}
\newcommand{\bbr}{\mathbb R}
\newcommand{\bbz}{\mathbb Z}
\newcommand{\bbn}{\mathbb N}
\newcommand{\bbc}{\mathbb C}
\newcommand{\diff}{\backslash}
\newcommand{\tr}{\textnormal{tr}}
\newcommand{\sheaf}[1]{\mathcal{#1}}
\newcommand{\weyl}{\Omega}
\newcommand{\weyll}{{\widehat{\Omega}}}
\newcommand{\mweyl}{{M_N(\Omega)}}
\newcommand{\mweyll}{{M_N(\widehat{\Omega})}}
\newcommand{\mxx}[4]{\left(\begin{array}{cc} #1 & #2\\ #3 & #4\end{array}\right)}
\title{Elementary Examples of Solutions to Bochner's Problem for Matrix Differential Operators}
\author{W.R. Casper}
\begin{document}

\maketitle
\begin{abstract}
In this paper, we demonstrate an elementary method for constructing new solutions to Bochner's problem for matrix differential operators from known solutions.  We then describe a large family of solutions to Bochner's problem, obtained from classical solutions, which include several examples known from the literature.  By virtue of the method of construction, we show how one may explicitly identify a generating function for the associated sequence of monic $w$-orthogonal matrix polynomials $\{p(x,n)\}$, as well as the associated algebra $D(w)$ of all matrix differential operators for which the $\{p(x,n)\}$ are eigenfunctions.  We also include some general results on the structure of the algebra $D(w)$.
\end{abstract}

\section{Introduction}
A weight function is a nonzero measureable function $w: \bbr\rightarrow [0,\infty)$ satisfying the condition that the moments $\int w(x)x^n dx$ are all finite.  A weight function gives rise to an inner product on the space of polynomials in $x$, defined by
$$\langle f(x),g(x)\rangle_w := \int f(x)w(x)g(x)^*dx.$$
By Gram-Schmidt, we may obtain a sequence of pairwise orthogonal polynomials $\{p(x,n)\}$ such that $p(x,n)$ has degree $n$ for each integer $n\geq 0$.  Such a sequence is called a sequence of orthogonal polynomials for $w$, or if $w$ is implied, simply a sequence of orthogonal polynomials.

Bochner's problem, introduced in the paper \cite{bochner1929sturm}, is to determine for which weights $w$ the associated polynomials are a family of eigenfunctions of some second order differential operator.  Lucky for us, Bochner provides a solution to his problem in the very same paper.  Up to affine changes of coordinates, the only orthogonal polynomials satisfying this property are the classical families of Hermite, Laguerre, and Jacobi.

The question Bochner addresses generalizes naturally to operators of higher order.  Consider a weight function $w$ and a sequence of orthogonal polynomials $\{p(x,n)\}$ . The set $D(w)$ of all differential operators for which the sequence is a family of eigenfuntions forms an algebra.  Moreover, this algebra is independent of the choice of sequence of orthogonal polynomials for $w$, since after all the value of $p(x,n)$ is unique up to a scalar multiple for each $n$.  In terms of $D(w)$, Bochner's result tells us for which $w$ the algebra $D(w)$ contains an operator of order two.  However, the classification of weight matrices $w$ for which the algebra $D(w)$ contains an operator of higher order seems to be very hard.  The classification for operators of order $4$ was done by Krall \cite{krall1981orthogonal}, but for higher order is still open.

Bochner's problem and the notion of orthogonal polynomials also extends naturally to matrix-valued polynomials.  A weight matrix is a matrix-valued function $w(x): \bbr\rightarrow M_N(\bbc)$ which is sufficiently nice (explained below) so as to induce a nondegenerate matrix-valued inner product on the space of matrix-valued polynomials defined by
$$\langle f(x),g(x)\rangle_w := \int f(x)w(x)g(x)^*dx.$$
Here $g(x)^*$ refers to the Hermitian-conjugate of $g(x)$.  Again by a process identical to Gram-Schmidt, but with matrix values, we may obtain a sequence of polynomials $\{p(x,n)\}$ such that $p(x,n)$ is degree $n$ with \emph{nonsingular} leading coefficient for all integers $n\geq0$ and such that $\langle p(x,n),p(x,m)\rangle_w = 0I$ for $m\neq n$.  A sequence of matrix-valued polynomials satisfying these properties is called a sequence of orthogonal polynomials for $w$.

To extend Bochner's problem to matrix orthogonal polynomials, one should consider the algebra $M_N(\Omega)$ of matrix-valued differential operators, eg. operators of the form
$$\delta =  a_0(x) + \partial a_1(x) + \dots + \partial^\ell a_\ell(x),$$
where the $a_i(x)\in M_N(\bbc[[x]])$ and $\delta$ acts on $M_N(\bbc[[x]])$ by
$$f(x)\cdot\delta := f(x)a_0(x) + f'(x)a_1(x) + \dots + f^{(\ell)}(x)a_\ell(x).$$
In particular, our differential operators act on the right.  This is required because of the noncommutativity of the coefficients: the differential operators must act on the right in order to be compatible with the matrix-valued inner product \cite{duran1997matrix}.  A function $f(x)$ is called an eigenfunction of $\delta$ if there exists a matrix $\lambda\in M_N(\bbc)$ such that $f(x)\cdot\delta = \lambda f(x)$.  With this notion of matrix differential operators, we may state the matrix version of Bochner's problem.
\begin{prob}[Bochner's Problem for Matrix Differential Operators]
Let $w(x)$ be a weight matrix, and let $\{p(x,n)\}$ be a sequence of orthogonal matrix polynomials for $w(x)$.  When does there exists a matrix differential operator of order two for which $p(x,n)$ is an eigenfunction for every integer $n\geq 0$?
\end{prob}

Bochner's problem for matrix differential operators is considered in numerous papers, including many of the papers mentioned in the references below.  For a helpful survey, see \cite{duran2005survey}.  Unlike the scalar case, general classification results for Bochner's problem remain elusive, even for $2\times 2$ matrices.  Many papers therefore have focused instead on providing new examples of Bochner pairs $(w,\delta)$, eg. a weight matrix $w$ and a second order operator $\delta\in D(w)$.

More recent papers have explored the structure of the algebra $D(w)$ of all matrix differential operators for which the $p(x,n)$ are eigenfunctions.  In particular, \cite{tirao2011} \cite{castro2006} \cite{grunbaum2007} provide examples of generators and relations for $D(w)$ for various values of $w$.  Even more examples are \cite{duran2008some}\cite{grunbaum2008}\cite{pacharoni2008sequence}\cite{zurrian2015}\cite{zurrian2016algebra}.  These examples demonstrate that the structure of $D(w)$ can be nuanced and interesting, unlike in the scalar case.  However, despite an ever increasing list of examples, more general results regarding the structure of $D(w)$ remain a mystery.  In particular, current methods of finding generators and relations for $D(w)$ are often ad hoc, or based on computational evidence, and can involve extended calculation.
 
The purpose of the current paper is to demonstrate how under sufficiently nice conditions one may use Darboux transformations to create a new Bochner pair $(\wt w,\wt \delta)$ from a known Bochner pair $(w,\delta)$.  Moreover, we show that when $(\wt w,\wt \delta)$ arises from $(w,\delta)$ by a Darboux transformation then the algebras $D(\wt w)$ and $D(w)$ are closely related, so that knowledge of the structure of $D(w)$ leads to knowledge of the structure of $D(\wt w)$.  As a result, this paper leads both to new families of examples of Bochner pairs not currently in the literature, as well as very efficient derivations of the structure of the algebra $D(\wt w)$ of matrix differential operators for several examples of weights $\wt w$ already featured in the literature.  Lastly, we prove a couple general results regarding the structure of the algebra $D(w)$, not currently featured in the literature.  In particular, we prove that $D(w)$ must necessarily be finitely generated over its center, and that its center must be an affine curve.  We also prove that for $N = 2$, the center must be rational.

A Darboux transformation of a differential operator $\delta$ is a new differential operator $\wt\delta = \mu\nu$ obtained by means of a factorization of the original operator $\delta = \nu\mu$.  For a short survey of Darboux transformations, see \cite{rosu1998short}, and for a great read about the role of Darboux transformations in the context of orthogonal polynomials satisfying differential equations, see \cite{griinbaum1996orthogonal}.  Further background on the Darboux transformations relevant to this paper can be found in \cite{grunbaum2011darboux}\cite{etingof1997factorization}.  Finally, for a very recent article exploring Darboux transformations in a noncommutative context, applying in particular to matrix differential operators, see \cite{geiger2015noncommutative}.  This notion works equally as well for matrix differential operators as it does for ordinary differential operators.  However, given an arbitrary factorization of $\delta$, there's no reason to expect that the transformed operator $\wt\delta$ will be in $D(\wt w)$ for some new weight matrix $\wt w$.  In order to guarantee this, we must be more methodical about our choice of the factorization of $\delta$.

The method presented in this paper relies on the application of two different adjoints $*$ and $\dag$ on the algebra of differential operators.  The first adjoint $*$ is the standard notion of the adjoint, which extends the Hermitian-conjugate on matrix-valued functions and satisfies
$$(\partial^m)^* = (-1)^m\partial^m.$$
The second $\dag$ is the formal $w$-adjoint of $\delta$, and is defined in terms of the first by 
$$\delta^\dag := w(x)\delta^*w^{-1}(x),$$
in some neighborhood of $x=0$.  The requisite properties of $w$ are discussed further in the section on adjoints later in the paper.  For clarity, we provide a formula for the formal $w$-adjoint of a first order differential operator in the next example.
\begin{ex}
Consider the first-order matrix differential operator $\delta = a_0(x) + \partial a_1(x)$.  Then the standard adjoint of $\delta$ is given by
$$\delta^* = a_0(x)^* - a_1'(x)^* - \partial a_1(x)^*,$$
and the formal $w$-adjoint of $\delta$ is given by
$$\delta^\dag = w(x)[a_0(x)^* - a_1'(x)^* - w(x)^{-1}w'(x)a_1(x)^*]w(x)^{-1} - \partial w(x)a_1(x)^*w(x)^{-1}.$$
\end{ex}

Using the two adjoint definitions we have introduced, we are now able to state our Main Theorem, which provides a way of obtaining a Darboux transformation of a Bochner pair $(w,\delta)$ to a new Bochner pair $(\wt w,\wt \delta)$.
\begin{thm}[Main Theorem]\label{main theorem}
Let $(w,\delta)$ be a Bochner pair supported on $(x_0,x_1)$ with $\lim_{x\rightarrow x_i} (|x|^n + 1)w(x) = 0$ for $i=0,1$ and $n\geq 0$ and
$$\delta = a_0(x) + \partial a_1(x) + \partial^2 a_2(x)I,$$
with $a_0(x),a_1(x)$ matrix-valued functions satisfying $a_1(x)^\dag = a_1(x)$, with $a_2(x)$ a scalar-valued function which is nonzero on $(x_0,x_1)$, and with $\delta$ degree-preserving.  Suppose that there exist matrix polynomials $v_1(x),v_0(x)$ satisfying the following properties:
\begin{itemize}
\item  $\deg(v_i(x)) = i$, $\det(v_i(x))\neq 0$ on $(x_0,x_1)$ for $i=0,1$ and $(v_0v_1^{-1})^\dag = v_0v_1^{-1}$
\item  $\tr((v_i(x)a_2(x)v_i(x)^\dag)^{-1})\in L^2(\tr(w(x))dx)$ for $i=0,1$
\item  $(v_0(x)v_1(x)^{-1})^{2}a_2(x) + (v_0(x)v_1(x)^{-1})'a_2(x) + (v_0(x)v_1(x)^{-1})a_1(x) + a_0(x) = 0$
\end{itemize}
Then the following is true
\begin{enumerate}[(a)]
\item  there exists a smooth, matrix-valued function $f$ on $(x_0,x_1)$ satisfying $v_1f(v_1f)^\dag = a_2$
\item  $\wt w = fwf^*$ is a weight matrix
\item  $\nu := \partial v_1(x) - v_0(x)$ and $\mu := -f(x)f^\dag(x)\nu^\dag$ are degree-preserving and $\delta = \nu\mu$
\item  $\wt p(x,n) := p(x,n)\cdot\nu$ is a sequence of orthogonal matrix polynomials for $\wt w$
\item  $(\wt w,\wt \delta)$ is a Bochner pair for $\wt\delta := \mu\nu$
\item  $$D(\wt w)\supseteq (\nu^{-1}D(w)\nu)\cap M_N(\Omega) = \{\nu^{-1}\eta\nu: \eta\in D(w),\ \ker(\nu)\cdot\eta\subseteq\ker(\nu)\}.$$
\end{enumerate}
\end{thm}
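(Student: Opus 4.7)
The plan is to dispatch the claims in the order (c), (a)--(b), (d)--(e), (f), with (c) providing the algebraic heart of the argument.

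\textbf{Factorization (c) and construction of $f$ (a)--(b).} First I rewrite $\nu = (\partial - u)v_1$ where $u := v_0 v_1^{-1}$, so that $\nu^\dag = v_1^\dag(\partial - u)^\dag$; the hypothesis $u^\dag = u$ combined with a direct calculation of the $w$-adjoint gives $(\partial - u)^\dag = -\partial - w'w^{-1} - u$. Using (a), which says precisely $v_1 f f^\dag v_1^\dag = (v_1 f)(v_1 f)^\dag = a_2$, the product collapses to $\nu\mu = -(\partial - u) a_2 (\partial - u)^\dag$. Expanding in the standard form $b_0 + \partial b_1 + \partial^2 b_2$ and matching coefficients with $\delta$: the top coefficient is $a_2 I$ automatically; the $\partial$-coefficient is $a_2' + a_2 w'w^{-1}$, which must equal $a_1$ by self-adjointness of the Bochner $\delta$; and the constant coefficient becomes $-a_2 u^2 - a_2 u' - u(a_2' + a_2 w'w^{-1}) = -a_2 u^2 - a_2 u' - u a_1$, which equals $a_0$ by the hypothesis Riccati equation. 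To construct $f$, I unfold $\dag$ in $(v_1 f)(v_1 f)^\dag = a_2 I$ to obtain $fwf^* = a_2 v_1^{-1}w v_1^{-*}$; self-adjointness of $\delta$ forces $a_2$ to be real, hence sign-definite on $(x_0,x_1)$ by continuous nonvanishing, and after fixing signs the right-hand side is Hermitian positive-semidefinite, so a smooth $f$ solving the equation exists by a pointwise square-root argument. Part (b) then follows: $\wt w = fwf^*$ is non-negative Hermitian, and finite moments come from $\det v_1 \neq 0$ on $(x_0,x_1)$ together with the rapid decay of $w$ at the endpoints.

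\textbf{Orthogonality and Bochner property (d)--(e).} Degree-preservation of $\nu$: the leading coefficient on $p_n$ is $\ell_n(n \ell_{v_1} - v_0)$, nonsingular because the degree-preserving composition $\nu\mu = \delta$ forces each factor to be so. For orthogonality I transfer $\nu$ across the $\wt w$-inner product using $\mu^\dag = (-ff^\dag \nu^\dag)^\dag = -\nu ff^\dag$ together with $\wt w = fwf^*$: the defining $\dag$-identity $\langle f\cdot \delta, g\rangle_w = \langle f, g\cdot \delta^\dag\rangle_w$ (whose boundary terms are killed by the decay hypothesis on $w$), applied to $\mu$, yields
$$\langle \wt p_n, \wt p_m\rangle_{\wt w} = \langle p_n\cdot\nu, p_m\cdot\nu\rangle_{\wt w} = -\langle p_n\cdot\nu\mu, p_m\rangle_w = -\langle p_n\cdot\delta, p_m\rangle_w = -\Lambda_n\langle p_n, p_m\rangle_w,$$
which vanishes for $m \neq n$; combined with degree-preservation and nonsingular leading coefficients, this is (d). The eigenvalue identity $\wt p_n\cdot\wt\delta = p_n\cdot\nu\mu\nu = \Lambda_n \wt p_n$ then gives $\wt\delta \in D(\wt w)$, establishing (e).

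\textbf{Algebra inclusion (f).} For $\eta \in D(w)$ with $\ker(\nu)\cdot\eta \subseteq \ker(\nu)$, I need to show $\zeta := \nu^{-1}\eta\nu$ is a genuine element of $M_N(\weyl)$ and lies in $D(\wt w)$. The kernel condition ensures right-multiplication by $\eta$ descends from $M_N(\bbc[x])$ to $\img(\nu)$, so $\zeta$ is at least well-defined as a linear endomorphism of matrix polynomials; the remaining task is to upgrade this to a differential operator of bounded order via an algebraic argument in $M_N(\weyl)$, using the explicit form of $\nu$ and its near-invertibility off its finite-dimensional kernel. Once $\zeta \in M_N(\weyl)$ is known, the identity $\wt p_n\cdot\zeta = p_n\cdot\eta\cdot\nu = H_n\wt p_n$ (with $H_n$ the eigenvalue of $\eta$ on $p_n$) places $\zeta$ in $D(\wt w)$. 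The main obstacles are in (f), where the algebraic upgrade of $\nu^{-1}\eta\nu$ from a linear endomorphism to a bona fide differential operator requires genuine work, and in the orthogonality calculation in (d), where careful sign-tracking and use of the decay hypothesis on $w$ to annihilate boundary terms are essential.
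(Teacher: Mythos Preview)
Your factorization in (c) via $\nu\mu = -(\partial - u)\,a_2\,(\partial - u)^\dag$ is correct and arguably cleaner than the paper's approach, which instead computes $\mu = \partial v_1^{-1}a_2 - v_0^{-1}a_0$ explicitly and multiplies out $\nu\mu$. The orthogonality step in (d) is essentially the paper's as well. But three parts are genuinely incomplete.

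In (b) your justification is insufficient: ``$\det v_1 \neq 0$ on $(x_0,x_1)$'' together with ``rapid decay of $w$'' does not prevent $v_1^{-1}$ from blowing up at the endpoints faster than $w$ decays, so $\wt w = a_2\, v_1^{-1} w v_1^{-*}$ could still fail to have finite moments. The hypothesis $\tr((v_1 a_2 v_1^\dag)^{-1}) \in L^2(\tr(w)\,dx)$, which you never invoke, is precisely what closes this: the paper bounds $\tr(\wt w)$ pointwise by a product involving $\tr(w)$ via a trace inequality for positive-semidefinite matrices, and then applies H\"older against the $L^2$ hypothesis. In (e) you show $\wt\delta \in D(\wt w)$ but omit the verification that $\wt\delta$ is $\wt w$-symmetric, which the paper's definition of Bochner pair requires; the paper checks $\wt w(\wt\delta)^*\wt w^{-1} = \wt\delta$ by a direct unwinding of $\wt\delta = ff^\dag\nu^\dag\nu$.

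For (f) you correctly flag the obstacle but do not resolve it. That $\ker(\nu)\cdot\eta \subseteq \ker(\nu)$ forces $\nu^{-1}\eta\nu$ to be a genuine element of $M_N(\weyll)$ is the content of a separate lemma in the paper, proved by showing that monic operators over $M_N(\bbc[[x]])$ have kernels which are free left $M_N(\bbc)$-modules with invertible generators, hence factor into monic first-order pieces, after which left-divisibility reduces to kernel containment. Your picture of $\eta$ ``descending to $\img(\nu)$'' on $M_N(\bbc[x])$ is not the right one: the relevant kernel lives in $M_N(\bbc[[x]])$, not in polynomials, and the argument proceeds by producing an auxiliary operator $\nu'$ with $\nu'\nu$ monic so that the monic factorization theory applies.
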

The Main Theorem is not as general as possible.  The idea is to decompose $\delta$ as $\delta = -\nu ff^\dag\nu^\dag$ such that $\nu$ is degree-preserving and $w$-adjointable.  The Main Theorem simply provides a specific context where this holds.  Even more generally, we could find $\nu$ such that $\delta\nu = \nu\wt\delta$ for some differential operator $\wt\delta$, eg. a Darboux conjugation rather than a Darboux transformation.
It is also important to note that the function $f$ in the Main Theorem is not unique -- and in fact there are many such functions up to a choice of smooth, unitary matrix-valued function.  To find such an $f$, we split $w(x)$ as $u(x)u(x)^*$, which we may do since $w(x)$ is Hermitian.  Then $u^{-1}v^{-1}a_2 (v^\dag)^{-1} u$ is also Hermitian, and therefore may be split as $hh^*$ for some matrix-valued function $h$.  Then taking $f = uhu^{-1}$ completes the construction. 

Motivated by our Main Theorem, we make the following definition
\begin{defn}\label{Darboux transformation definition}
Let $(w,\delta)$ be a Bochner pair, and let $\{p(x,n)\}$ be a sequence of orthogonal matrix polynomials for $w$.  We call $(\wt w,\wt\delta)$ a \vocab{Darboux transformation} of $(w,\delta)$ if there exist matrix differential operators $\mu$ and $\nu$ such that $\delta = \nu\mu$, $\wt\delta = \mu\nu$, and $\wt p(x,n) := p(x,n)\cdot\nu$ defines a sequence of orthogonal matrix polynomials for $\wt w$.  We also say that $\nu$ is a Darboux transformation from the Bochner pair $(w,\delta)$ to the Bochner pair $(\wt w,\wt\delta)$.  We define the \vocab{associated subalgebra} $D(\wt w,\nu,w)$ of $D(\wt w)$ by
$$D(\wt w,\nu,w) := (\nu^{-1}D(w)\nu)\cap\mweyl = \{\nu^{-1}\delta\nu: \delta\in D(w),\ \ker(\nu)\cdot\delta\subseteq \ker(\nu)\}.$$
\end{defn}
In general, $D(\wt w,\nu,w)$ will be a proper subalgebra of $D(\wt w)$.  However, we should expect the subalgebra to contain \emph{most} of $D(\wt w)$.  In particular, we have the following proposition, the proof of which is found in § \ref{The Proof Section}.
\begin{prop}\label{subalgebra proposition}
Let $\nu$ be a Darboux transformation from the Bochner pair $(w,\delta)$ to the Bochner pair $(\wt w,\wt\delta)$.  Suppose $\wt\delta = \partial^2\wt a_2 + \partial \wt a_1 + \wt a_0$ with $\det(\wt a_2)\neq 0$ and that there exists a positive integer $\ell$ such that
$$\dim_\bbc( D(\wt w,\nu,w)_i/D(\wt w,\nu, w)_{i-1}) = \dim_\bbc( D(\wt w,\nu, w)_{i+2}/D(\wt w,\nu,w)_{i+1}),\ \forall i\geq\ell.$$
where here $D(\wt w,\nu,w)_i$ represents the linear subspace of $D(\wt w,\nu,w)$ of operators of order at most $i$.  Then $D(\wt w)$ is a finitely generated algebra over $D(\wt w, \nu, w)$, and is generated by elements of order less than $\ell$.
\end{prop}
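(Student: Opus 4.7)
The plan is to work at the level of associated graded algebras and exploit the fact that $\wt\delta$ itself lies in the subalgebra $D(\wt w,\nu,w)$: from $\nu\wt\delta = \nu\mu\nu = \delta\nu$ one obtains $\wt\delta = \nu^{-1}\delta\nu \in \nu^{-1}D(w)\nu$, and $\wt\delta \in M_N(\Omega)$ by construction. Set $V_i := D(\wt w)_i/D(\wt w)_{i-1}$ and $W_i := D(\wt w,\nu,w)_i/D(\wt w,\nu,w)_{i-1}$; these embed as graded components of subrings of the symbol algebra $M_N(\bbc[x])[\xi]$, with $W_i \subseteq V_i$. The space $D(\wt w)_{<\ell}$ is finite-dimensional by the standard bound $\deg a_j \leq j$ on the coefficients of operators in $D(\wt w)$, so a basis of $D(\wt w)_{<\ell}$ furnishes a finite set of candidate generators.

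Since $\wt a_2$ is invertible, right multiplication by the principal symbol $\sigma(\wt\delta) = \xi^2 \wt a_2$ is injective on every $V_i$ and restricts to an injection $W_i \hookrightarrow W_{i+2}$. The dimension hypothesis forces these latter injections to be isomorphisms, so $W_{i+2} = W_i \cdot \sigma(\wt\delta)$ for all $i \geq \ell$. Iterating, every element of $D(\wt w,\nu,w)$ of order $k \geq \ell+2$ equals $\eta_0 \wt\delta$ modulo strictly lower-order terms, for some $\eta_0$ again in $D(\wt w,\nu,w)$.

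Let $A$ denote the subalgebra of $D(\wt w)$ generated by $D(\wt w,\nu,w)$ together with a basis of $D(\wt w)_{<\ell}$. To prove $A = D(\wt w)$, I would induct on the order $k$ of $\eta \in D(\wt w)$: the base $k < \ell$ is immediate, and for $k \geq \ell$ one seeks $\eta' \in A \cap D(\wt w)_k$ with $\sigma(\eta') = \sigma(\eta)$, after which $\eta - \eta' \in D(\wt w)_{<k}$ falls to the induction hypothesis. When $\sigma(\eta)$ lies in $W_k$, the factorization from the previous paragraph produces the needed $\eta'$ in the form $\eta_0 \wt\delta + (\textnormal{lower})$ with $\eta_0 \in D(\wt w,\nu,w) \subseteq A$.

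The main obstacle will be to handle symbols $\sigma(\eta) \in V_k \setminus W_k$: the dimension hypothesis gives us control only over the subalgebra, and we need to show that every ``new'' content of $V_k$ arises from products of low-order elements with subalgebra elements. I would treat $V$ as a right $W$-module, using that right multiplication by $\sigma(\wt\delta)$ is injective on all of $V$ and bijective on $W$ in large degree: combined with the Bochner-type degree bound constraining each $V_j$ as a subspace of $M_N(\bbc[x])[\xi]$, this should force $V$ to be finitely generated as a right $W$-module by its components of degree less than $\ell$. Lifting this conclusion to the operator level closes the induction and yields finite generation of $D(\wt w)$ over $D(\wt w,\nu,w)$ by elements of order less than $\ell$.
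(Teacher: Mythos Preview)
Your symbol-level setup is fine, and your observation that $\wt\delta\in D(\wt w,\nu,w)$ is correct and useful. But the argument stalls exactly where you flag it: for $k\geq\ell$ you can only handle symbols in $W_k$, and your proposed fix (finite generation of $V$ as a right $W$-module from degree bounds) is too vague to close the gap. The degree bound $\deg a_j\leq j$ only tells you each $V_j$ sits in a space of dimension $N^2(j+1)$; it does not force $V$ to be generated over $W$ by low-degree pieces.

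What you are missing is a one-line strengthening of the fact you already proved: not merely $\wt\delta\in D(\wt w,\nu,w)$, but
\[
D(\wt w)\,\wt\delta\ \subseteq\ D(\wt w,\nu,w).
\]
The point is that for any $\wt\eta\in D(\wt w)$ one has $\nu(\wt\eta\wt\delta)\nu^{-1} = \nu\wt\eta\mu$, which is a genuine matrix differential operator (no $\nu^{-1}$ survives), and a direct check on the orthogonal polynomials shows it lies in $D(w)$; hence $\wt\eta\wt\delta\in\nu^{-1}D(w)\nu\cap M_N(\Omega)=D(\wt w,\nu,w)$. With this in hand, right multiplication by $\wt\delta$ gives an injection $V_i\hookrightarrow W_{i+2}$ (not just $W_i\hookrightarrow W_{i+2}$), so the sandwich
\[
\dim_\bbc W_i\ \leq\ \dim_\bbc V_i\ \leq\ \dim_\bbc W_{i+2}
\]
collapses to equalities for $i\geq\ell$. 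Thus $V_i=W_i$ for all $i\geq\ell$, and your induction on order goes through trivially: every $\sigma(\eta)$ with $\eta\in D(\wt w)_k$, $k\geq\ell$, already lies in $W_k$. No module-theoretic workaround is needed.
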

In practice, the previous proposition will allow us to determine the structure of $D(\wt w)$ from the structure of $D(w)$.

\subsection{A Brief Reading Guide}
Our Main Theorem simultaneously doubles as a source of new examples of weight matrices $w$ with nontrivial $D(w)$, and a means of obtaining explicitly the value of the algebra $D(w)$.  As a demonstration of this, we include explicit examples of calculations of $D(w)$ from the literature, as well as extensions to higher-dimensional analogues of these examples.  This paper itself attempts to present some very analytic ideas in an algebraic way, with the motivation of attracting the algebraically minded to the problem of studying the structure of $D(w)$ from the perspective of noncommutative algebra and algebraic geometry.  For this reason, the author includes a more in-depth background in sections § \ref{classical orthogonal polynomials} and § \ref{matrix orthogonal polynomials}.  The reader who is already acquainted with the basic ideas and notation of our topic, and who would appreciate a dessert-before-dinner approach to the results, is encouraged to skip forward to the examples in § \ref{example section}.   The reader who is less acquainted with the theory of classical orthogonal polynomials and the current theory of their matrix counterparts is encouraged to peruse the background sections § \ref{classical orthogonal polynomials} and § \ref{matrix orthogonal polynomials}.  Of course, we encourage all readers to glance at § \ref{terminology and notation} whenever they are uncertain about terminology or notation.

\subsection{Terminology and Notation}\label{terminology and notation}
As a basic notation used throughout the paper:
\begin{itemize}
\item  $I$ the identity matrix (with size determined by context)
\item  $\bbc[[x]]$ the algebra of power series in $x$
\item  $\weyl$ the Weyl algebra, ie. the algebra of differential operators with $\bbc[x]$-coefficients
\item  $\weyll$ the algebra of differential operators with $\bbc[[x]]$-coefficients
\item  $M_N(R)$ the algebra of $N\times N$-matrices with entries in $R$
\item  $D(w)$ the algebra of differential operators for weight matrix $w$ (see \ref{D(w) definition})
\item  $E(w)$ the algebra of eigenvalue sequences for weight matrix $w$ (see \ref{E(w) definition})
\item  $\ker(\delta)$ the kernel of a matrix differential operator $\delta\in\mweyl$ as a linear operator on the $\bbc$-vector space $M_N(\bbc[[x]])$
\item  $\nu^{-1}$ the inverse of $\nu\in\mweyl$ as a matrix pseudo-differential operator (if it exists)
\item  $1_{(x_0,x_1)}(x)$ the indicator function of the interval $(x_0,x_1)$
\end{itemize}

We recall the definition of the Weyl algebra $\weyl$ for this paper.
\begin{defn}\label{Weyl algebra definition}
The Weyl algebra $\weyl$ is defined to be the set
$$\weyl = \{a_0(x) + \partial a_1(x) + \dots + \partial^n a_n(x): n\in\bbn,\ a_1(x),\dots, a_n(x)\in \bbc[x]\}$$
where the products are defined by means of the fundamental commutation relation
$$x\partial - \partial x = 1.$$
\end{defn}
Note that this is slightly at odds with the typical definition of the Weyl algebra, whose fundamental commutation relation is $\partial x - x\partial  = 1$.  This is because, for reasons explained later, our matrix differential operators will act on functions on the \emph{right}.  As an explicit example, consider a second-order differential operator
$$\delta = a_0(x) + \partial a_1(x) + \partial^2 a_2(x)\in\weyl.$$
It will act on a smooth function $f(x)$ by
$$f(x)\cdot\delta = f(x)a_0(x) + f'(x)a_1(x) + f''(x)a_2(x).$$
This right action has the consequence of reversing the usual fundamental commutation relation of the Weyl algebra.

\section{Background}
\subsection{Classical Orthogonal Polynomials and Bochner's Problem}\label{classical orthogonal polynomials}
This section is intended to provide a reader with a quick recap of the basic points in the theory of orthogonal polynomials.  We begin with the definition of a weight function.  For simplicity, we dodge technical measure-theoretic details by working with smooth weights.

\begin{defn}
Let $-\infty\leq a<b\leq \infty$.  A \vocab{weight function} supported on $(x_0,x_1)$ is a nonnegative function $r:\bbr\rightarrow (0,\infty)$ which is $0$ off of $(x_0,x_1)$ and positive and smooth on $(x_0,x_1)$, satisfying the condition that the moments $\int_\bbr x^n r(x)dx <\infty$ for all integers $n\geq 0$.  The interval $(x_0,x_1)$ on which $r$ is nonzero is called the \vocab{support of $r$}.
\end{defn}

Given a weight function $r(x)$, we define an inner product on $\bbc[x]$ by
$$\langle p(x),q(x)\rangle_r := \int_\bbr p(x)r(x)\ol{q(x)}dx.$$
By the process of Gram-Schmidt elimination, we can construct a sequence of pairwise-orthogonal polynomials $p(x,0),p(x,1),p(x,2),\dots$ with $p(x,n)$ a polynomial of degree $n$.  If we impose the additional constraint that each $p(x,i)$ is monic, then this sequence of polynomials is unique.

\begin{defn}
Let $r(x)$ be a weight function.  A sequence of polynomials $\{p(x,n)\}_{n=0}^\infty$ is called a \vocab{sequence of orthogonal polynomials} for $r$ if $p(x,n)$ has degree $n$ for all integers $n\geq 0$ and $\langle p(x,n),p(x,m)\rangle_r = 0$ for all $m,n\geq 0$ with $n\neq m$.
\end{defn}

The sequence of monic orthogonal polynomials for a weight function $r(x)$ can in practice be calculated recursively.
\begin{prop}[Three-Term Recurrence Relation]
Let $r(x)$ be a weight function.   Then for all integers $n\geq 0$, there exist constants $s_n,t_n\in\bbc$ such that
\begin{equation}\label{recurrence relation}
xp(x,n) = p(x,n+1) + s_np(x,n) + t_np(x,n-1).
\end{equation}
\end{prop}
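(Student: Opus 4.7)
The plan is to expand $xp(x,n)$ in the orthogonal basis $\{p(x,0),\ldots,p(x,n+1)\}$ of the space of polynomials of degree at most $n+1$, and then use symmetry of the weighted inner product under multiplication by the real variable $x$ to force all but three of the coefficients to vanish.

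First, since each $p(x,k)$ is monic of degree exactly $k$, the collection $\{p(x,0),\ldots,p(x,n+1)\}$ is a basis for the polynomials of degree at most $n+1$, so we may write uniquely
$$xp(x,n) = \sum_{k=0}^{n+1} c_{n,k}\,p(x,k)$$
for constants $c_{n,k}\in\bbc$. Matching the coefficient of $x^{n+1}$ on the two sides (using that both $p(x,n)$ and $p(x,n+1)$ are monic) immediately gives $c_{n,n+1}=1$. By orthogonality, each of the remaining coefficients is
$$c_{n,k}=\frac{\langle xp(x,n),\,p(x,k)\rangle_r}{\langle p(x,k),\,p(x,k)\rangle_r}.$$
Since $x$ is real on the support and $r(x)$ is real-valued and nonnegative, the weighted inner product satisfies $\langle xf,g\rangle_r=\langle f,xg\rangle_r$, so for $k\leq n-2$ we obtain
$$\langle xp(x,n),p(x,k)\rangle_r=\langle p(x,n),xp(x,k)\rangle_r=0,$$
because $xp(x,k)$ has degree at most $n-1$ and $p(x,n)$ is orthogonal to every polynomial of lower degree. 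Thus $c_{n,k}=0$ for $k\leq n-2$, leaving contributions only from $k\in\{n-1,n,n+1\}$. Setting $s_n:=c_{n,n}$ and $t_n:=c_{n,n-1}$ (with the convention $p(x,-1)=0$ and $t_0$ chosen arbitrarily in the base case $n=0$) yields exactly the claimed identity.

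There is no real obstacle here beyond bookkeeping. The only substantive step is the symmetry identity $\langle xf,g\rangle_r=\langle f,xg\rangle_r$, which relies on $x$ being real on the support and $r$ being real-valued, and the only edge case is $n=0$, which is handled by the convention above. One could additionally extract $t_n=\langle p(x,n),p(x,n)\rangle_r/\langle p(x,n-1),p(x,n-1)\rangle_r$ by computing $\langle xp(x,n),p(x,n-1)\rangle_r=\langle p(x,n),xp(x,n-1)\rangle_r=\langle p(x,n),p(x,n)\rangle_r$, but this refinement is not needed for the statement.
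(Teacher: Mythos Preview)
Your argument is correct and is the standard proof of the three-term recurrence. Note, however, that the paper does not actually supply a proof of this proposition: it is stated as a background fact in the review section, followed only by the remark that $s_n$ and $t_n$ can be expressed in terms of the moments of $r(x)$. So there is no ``paper's own proof'' to compare against; your write-up simply fills in the omitted classical argument.
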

This The values of $s_n$ and $t_n$ may be defined explicitly in terms of the moments of $r(x)$.

Sequences of orthogonal polynomials that are simultaneously eigenfunctions of a second-order differential operator arise naturally in Sturm-Liouville theory.  Three families of weights whose corresponding orthogonal polynomias satisfy a second-order differential operator were known classically.  These weights are listed in the table in Figure \ref{cop table I}.  The sequences of orthogonal polynomials for these weights are referred to as the classical orthogonal polynomials.
\begin{figure}[h]
\begin{center}
\begin{tabular}{|l|c|}\hline
Family    & weight                     \\\hline
Hermite   & $e^{-x^2}$                 \\
Laguerre  & $x^be^{-x}1_{(0,\infty)}$, $b>-1$\\
Jacobi    & $(1-x)^a(1+x)^b1_{(-1,1)}$, $a,b>-1$ \\\hline
\end{tabular}
\caption{The Classical Orthogonal Polynomial Weights}
\label{cop table I}
\end{center}
\end{figure}

The importance and applicability of the classical orthogonal polynomials lead Bochner to ask and answer the following question:
\begin{prob}[Bochner's Problem]\label{Bochner's Problem}
For which weight functions $r(x)$ do there exists second-order differential operators $\delta\in \weyl$ for which the sequence of monic orthogonal matrix polynomials $p(x,n)$ are simultaneously eigenfunctions?
\end{prob}
With this question in mind, we make the following definition.
\begin{defn}\label{scalar Bochner pair definition}
The pair of data $(r,\epsilon)$ with $r$ a weight function and $\epsilon$ a differential operator is called a \vocab{Bochner pair} if the sequence of monic orthogonal polynomials for $r$ are all eigenfunctions for $\epsilon$.
\end{defn}
In other words, $(r,\epsilon)$ is a Bochner pair if and only if there exists a sequence of complex numbers $\{\lambda_n\}_{n=0}^\infty$ such that $p(x,n)\cdot \epsilon = \lambda_np(x,n)$ for all $n\geq 0$.  The next theorem characterizes solutions to Bochner's problem.  See \cite{totik2005orthogonal}.
\begin{thm}\label{history lesson}
Suppose that $(r,\epsilon)$ is a Bochner pair, with $r$ supported on $(x_0,x_1)$ for $-\infty\leq a<b\leq \infty$.  Without loss of generality, we may assume that $\epsilon = \partial^2q(x) + \partial(l(x) + q'(x))$ for some polynomials $l(x), q(x)$ of degree $1$ and $2$, respectively.  Then the following are true:
\begin{enumerate}[(i)]
\item  the weight function $r(x)$ satisfies the Pearson equation
\begin{equation}\label{pearson equation}
q(x)r'(x)r^{-1}(x) = l(x),\ \ x\in (x_0,x_1)
\end{equation}
\item  the polynomials $q(x)$ and $l(x)$ satisfy the boundary condition
$$\lim_{x\rightarrow a+} q(x)r(x) = \lim_{x\rightarrow b-} l(x)r(x) = 0.$$
\end{enumerate}
Conversely, any weight function $r(x)$ supported on an interval $(x_0,x_1)$ satisfying (i) and (ii) for some polynomials $l(x)$ and $q(x)$ of degree $1$ and $2$, respectively gives a solution $(r,\epsilon)$ to Bochner's problem.
\end{thm}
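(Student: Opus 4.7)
The plan is a classical two-stage argument: in the forward direction I use the eigenfunction/orthogonality data to force a formal symmetry of $\epsilon$ against the weighted inner product, then read off Pearson's equation as the only differential identity that makes this symmetry valid; in the converse I use Pearson to rewrite $\epsilon$ in Sturm--Liouville form, which makes symmetry manifest, and then recover the eigenfunction property from symmetry together with degree bookkeeping.

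I would first constrain the shape of $\epsilon = a_0(x)+\partial a_1(x)+\partial^2 a_2(x)$ by testing the eigenvalue equation at $p(x,0)=1$, $p(x,1)$, and $p(x,2)$: these force $a_0$ to be a constant (which I then absorb into the $\lambda_n$), $\deg a_1\le 1$, and $\deg a_2\le 2$, after which I rename $a_2=q$ and write $a_1=l+q'$. Combining $p(x,n)\cdot\epsilon=\lambda_n p(x,n)$ with pairwise $r$-orthogonality yields
\[
\langle f\cdot\epsilon,g\rangle_r \;=\; \langle f,g\cdot\epsilon\rangle_r \quad\text{for all }f,g\in\bbc[x],
\]
because for indices $n\ne m$ both inner products vanish by orthogonality, while for $n=m$ they are trivially equal; bilinearity then extends the identity from the basis $\{p(x,n)\}$ to all polynomial pairs.

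Testing this symmetry at $f=1$ collapses it to $\int_{x_0}^{x_1}r(x)\bigl[g''(x)q(x)+g'(x)(l(x)+q'(x))\bigr]\,dx=0$ for every polynomial $g$. One integration by parts turns the identity into
\[
\int_{x_0}^{x_1}\bigl[(qr)'(x)-r(x)(l(x)+q'(x))\bigr]g'(x)\,dx \;=\; \bigl[q(x)r(x)g'(x)\bigr]_{x_0}^{x_1},
\]
and letting $g$ range over $x,x^2,x^3,\dots$, while using finiteness of all moments of $r$ to control the endpoint behavior of $qr\cdot x^k$, simultaneously yields the interior ODE $(qr)'=r(l+q')$, equivalent to the Pearson identity $qr'/r=l$, and the boundary vanishing $\lim_{x\to x_i}q(x)r(x)=0$; the second limit in (ii) then follows because $lr=qr'$ on $(x_0,x_1)$. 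For the converse, assuming (i) and (ii), Pearson lets me recast $f\cdot\epsilon=\frac{1}{r}(rqf')'$; a single integration by parts, whose endpoint term is killed by (ii), gives $\langle f\cdot\epsilon,g\rangle_r=-\int_{x_0}^{x_1} rqf'g'\,dx=\langle f,g\cdot\epsilon\rangle_r$. A direct leading-coefficient calculation shows $\epsilon$ is degree-preserving on $\bbc[x]$, so symmetry together with $\langle p(x,n),p(x,k)\rangle_r=0$ for $k<n$ forces $p(x,n)\cdot\epsilon$ to be a scalar multiple of $p(x,n)$, and the ratio of leading coefficients identifies $\lambda_n$.

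The main obstacle is the boundary-term bookkeeping in the forward direction: extracting the endpoint vanishing $qr\to 0$ from the symmetry relation, using only smoothness of $r$ on $(x_0,x_1)$ and finiteness of all polynomial moments, is delicate, especially at infinite endpoints $x_0=-\infty$ or $x_1=\infty$ where one must leverage the fast decay of $r$ implicit in moment-finiteness. The algebraic parts of the argument — the reduction of the shape of $\epsilon$, the symmetry-from-orthogonality step, and the degree-bookkeeping in the converse — are routine in comparison.
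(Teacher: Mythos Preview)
The paper does not actually prove this theorem: it is stated as background and attributed to the survey \cite{totik2005orthogonal}, so there is no in-paper argument to compare against. Your proposal is the standard classical proof and is essentially correct in outline.

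One small gap worth flagging: in your symmetry step you assert that for $n=m$ the two inner products $\langle p(x,n)\cdot\epsilon,p(x,n)\rangle_r$ and $\langle p(x,n),p(x,n)\cdot\epsilon\rangle_r$ are ``trivially equal''. With the sesquilinear inner product $\langle f,g\rangle_r=\int f\,r\,\overline{g}\,dx$ these are $\lambda_n\|p(x,n)\|^2$ and $\overline{\lambda_n}\|p(x,n)\|^2$, so you need $\lambda_n\in\bbr$. This is easy to arrange---since the monic $p(x,n)$ have real coefficients, the conjugate operator $\overline{\epsilon}$ also has them as eigenfunctions, so you may replace $\epsilon$ by its real part without loss---but it should be said explicitly. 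A second minor point: in the converse you invoke (ii) to kill the boundary term $[q r f'\overline{g}]_{x_0}^{x_1}$, but (ii) as stated only gives $q(x)r(x)\to 0$, whereas $f'\overline{g}$ is a polynomial; at an infinite endpoint you need $q r\cdot x^k\to 0$, which follows from the finiteness of all moments of $r$ rather than from (ii) alone. You already anticipate exactly this issue in the forward direction, so the same remark applies here.
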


In the case that $(r,\epsilon)$ is a Bochner pair, we can obtain the sequence of monic orthogonal matrix polynomias for $r$ in a far sleeker way than the recurrence relation of Equation \eqref{recurrence relation} \cite{totik2005orthogonal}.
\begin{thm}
Let $r(x)$ be a weight function satisfying the assumptions of Theorem \ref{history lesson}, and let $q(x)$ and $l(x)$ be as in the statement of the theorem.  Then the sequence of monic orthogonal polynomials $p(x,n)$ may be obtained by the Rodrigues formula
\begin{equation}\label{rodrigues formula}
p(x,n) = c_n (q(x)^n r(x))^{(n)} r^{-1}(x)
\end{equation}
for some constants $c_n$.  Moreover, the generating function
$$F(x, t) = \sum_{n=0}^\infty \frac{p(x,n)}{n!c_n}t^n$$
is analytic in $(x_0,x_1)\times I$ for some open interval $I$ of $0$ and is given by
\begin{equation}
F(x,t) = \frac{r(\lambda)}{r(x)}(1-tq'(\lambda))^{-1},
\end{equation}
where $\lambda = \lambda(x,t)$ satisfies
$$\lambda-x-tq(\lambda) = 0.$$
\end{thm}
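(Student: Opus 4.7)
My plan is to establish the Rodrigues formula first and then derive the closed-form generating function from it via the Lagrange--B\"urmann inversion formula. For the Rodrigues part, set $\pi_n(x) := (q(x)^n r(x))^{(n)} r(x)^{-1}$. The key observation is that the Pearson equation $q r' = l r$ forces, by a straightforward induction on $m$, the identity $(q^n r)^{(m)} = q(x)^{n-m} P_{n,m}(x)\, r(x)$ for some polynomial $P_{n,m}$ and every $0 \le m \le n$; at each differentiation step one replaces $r'$ by $(l/q)\, r$ and notes that the explicit $q$-factor produced by the Leibniz rule cancels the denominator. In particular $\pi_n = P_{n,n}$ is a polynomial, and a parallel degree count (using $\deg q = 2$ and $\deg l \le 1$) gives $\deg P_{n,m} = 2n - m$, so $\deg \pi_n = n$. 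Orthogonality of $\pi_n$ to $x^k$ for $k < n$ follows from integrating by parts $n$ times,
\begin{equation*}
\int x^k (q^n r)^{(n)}\, dx \;=\; (-1)^n \int (x^k)^{(n)} q(x)^n r(x)\, dx \;=\; 0,
\end{equation*}
with boundary terms at each stage having the shape $(\text{polynomial})\cdot q\cdot r$ evaluated at $x_0, x_1$, which vanish by condition (ii) of Theorem \ref{history lesson} together with the rapid decay of $r$ at the endpoints implied by the Pearson equation (the standard classical argument for Hermite, Laguerre, and Jacobi). The monic normalization then pins down $c_n$.

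For the generating function, I would apply the Lagrange--B\"urmann inversion formula in the form: if $\lambda = \lambda(x,t)$ is the analytic solution of $\lambda - x - t q(\lambda) = 0$ near $t = 0$, then for every function $h$ analytic near $x$ one has
\begin{equation*}
\frac{h(\lambda)}{1 - t\, q'(\lambda)} \;=\; \sum_{n=0}^\infty \frac{t^n}{n!}\, \frac{d^n}{dx^n}\bigl[q(x)^n h(x)\bigr].
\end{equation*}
Setting $h = r$ and dividing both sides by $r(x)$ turns the right-hand side into $\sum_{n\ge 0} (p(x,n)/(n!\, c_n))\, t^n$ via the Rodrigues formula just established, delivering exactly the claimed identity $F(x,t) = r(\lambda)/\bigl(r(x)(1 - t q'(\lambda))\bigr)$.

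The main analytic check is that $\lambda(x,t)$ exists as a jointly analytic function on some $(x_0,x_1)\times I$. This follows from the analytic implicit function theorem applied to $\Phi(\lambda,x,t) = \lambda - x - t q(\lambda)$, since $\partial_\lambda \Phi\big|_{t=0} = 1$ is nonzero. Joint analyticity of $F$ then reduces to the real-analyticity of $r$ on $(x_0,x_1)$, which is automatic because $r$ satisfies the linear ODE $r' = (l/q)\, r$ with $q$ nonvanishing on the interior, and to the nonvanishing of $1 - t q'(\lambda)$ near $t = 0$. Uniform convergence of the power series on compact sub-rectangles of $(x_0,x_1)\times I$ is then the standard Cauchy-estimate consequence of Lagrange inversion. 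The one point that I expect to be delicate, rather than hard in principle, is extracting the boundary-term vanishing used in the orthogonality step from the bare hypotheses of Theorem \ref{history lesson}; a clean uniform argument likely requires appealing to the explicit classification of classical weights rather than a purely general decay bound.
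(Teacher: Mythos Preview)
The paper does not supply a proof of this theorem: it is quoted as classical background material with a citation to Totik's survey and no argument is given in the text. There is therefore nothing in the paper to compare your proposal against.

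That said, your outline is the standard textbook derivation and is essentially sound. The Pearson-equation induction showing $(q^n r)^{(m)} = q^{n-m}P_{n,m}\,r$ is correct, the integration-by-parts orthogonality argument is the usual one (and you rightly flag that justifying the boundary terms in full generality really leans on the explicit classification of the three classical families), and the Lagrange--B\"urmann step with $h=r$ and the substitution $w=\lambda-x$, $\phi(w)=q(x+w)$ is exactly how one obtains the closed-form generating function.

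One small slip: your intermediate degree claim $\deg P_{n,m}=2n-m$ is wrong. From $P_{n,0}=1$ and the recursion $P_{n,m+1}=(n-m)q'P_{n,m}+qP_{n,m}'+lP_{n,m}$ one gets $\deg P_{n,m}\le m$ (with equality generically when $\deg q=2$ or $\deg l=1$), not $2n-m$. Fortunately your conclusion $\deg\pi_n=\deg P_{n,n}=n$ is unaffected, so this is a bookkeeping error rather than a gap in the argument. You should also note that the paper's hypothesis ``$\deg q=2$'' is not literally satisfied in the Hermite case ($q=1$), so the degree analysis has to accommodate $\deg q\le 2$; the inequality $\deg P_{n,m}\le m$ still holds in that regime.
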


The answer to Bochner's question is that the classical orthogonal polynomials are the only solutions to Bochner's problem.  This is the content of the next theorem.
\begin{thm}[Bochner\cite{bochner1929sturm}]
Up to an affine change of coordinates, the only weight functions $r(x)$ which are solutions of Bochner's problem are the classical weight functions.
\end{thm}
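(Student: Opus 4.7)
The plan is to take Theorem~\ref{history lesson} as the launching point, so that the problem reduces to classifying, up to affine change of coordinates, the pairs $(q, l)$ with $\deg q \leq 2$, $\deg l \leq 1$ such that the Pearson equation $q(x) r'(x) = l(x) r(x)$ admits a weight function solution (positive, smooth on the support, finite moments) with the required boundary behavior. Since the Pearson equation determines $r$ uniquely up to a positive scalar once $(q, l)$ are fixed, the entire classification is reduced to normalizing $(q, l)$ under the action of the affine group on $\bbr$ and then reading off $r = \exp\!\left(\int \tfrac{l(x)}{q(x)}\, dx\right)$.

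First I would split into cases based on $\deg q$. If $\deg q = 0$, write $q \equiv q_0$ and $l(x) = \alpha x + \beta$; integration gives $r(x) = C \exp\!\bigl(\tfrac{\alpha}{2q_0} x^2 + \tfrac{\beta}{q_0} x\bigr)$, and the finite-moments condition forces $\alpha/q_0 < 0$, so completing the square and rescaling $x$ yields the Hermite weight $e^{-x^2}$ on $\bbr$. If $\deg q = 1$, I would translate and rescale so that $q(x) = x$, then the Pearson equation gives $r(x) = C x^b e^{a x}$ on a half-line; the moment condition requires $a < 0$ and $b > -1$, and rescaling the variable produces the Laguerre weight $x^b e^{-x}$ on $(0, \infty)$. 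If $\deg q = 2$, I would subdivide by the root structure of $q$: for two distinct real roots, the affine group lets me normalize $q(x) = 1 - x^2$, and partial-fraction integration together with the boundary condition $\lim_{x \to \pm 1} q(x) r(x) = 0$ yields the Jacobi weight $(1-x)^a (1+x)^b$ on $(-1, 1)$ with $a, b > -1$.

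The remaining degree-two subcases are what I expect to be the main obstacle, and they must be excluded. For a real double root, normalize $q(x) = x^2$; the Pearson equation then integrates to $r(x) = C x^a \exp(-b/x)$, the so-called Bessel weight. Here the delicate step is to show that no genuine weight function (positive, smooth, with finite moments, on a subinterval of $\bbr$) satisfies the boundary conditions of Theorem~\ref{history lesson}: on any interval touching the singularity at $0$, the essential singularity of $\exp(-b/x)$ combined with the moment-convergence requirement leaves no admissible choice of parameters $a, b$. For a pair of complex-conjugate roots, normalize $q(x) = 1 + x^2$; then $r(x) = C (1 + x^2)^a \exp(b \arctan x)$, which cannot decay fast enough at $\pm\infty$ to have finite moments, and the boundary condition $q r \to 0$ at $\pm\infty$ fails outright. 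Ruling out these two subcases completes the classification.

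Finally, I would assemble the three surviving weight families (Hermite, Laguerre, Jacobi) together with the explicit affine normalizations used in each case to obtain the statement of the theorem. The bookkeeping on the affine group action and the careful verification of moment-convergence in each parameter range are routine but need to be stated cleanly; the conceptual content lies in the Pearson-equation reduction and the elimination of the Bessel and complex-root subcases.
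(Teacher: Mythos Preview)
The paper does not supply its own proof of this theorem: it is stated in the background section as a classical result and attributed via citation to Bochner's 1929 paper, with no argument given. There is therefore no in-paper proof to compare against.

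That said, your outline is the standard classical argument and is essentially correct. Two small points of precision are worth tightening. First, in the complex-conjugate-root case $q(x)=1+x^2$, the weight $r(x)=C(1+x^2)^a\exp(b\arctan x)$ \emph{can} have some finite moments when $a$ is sufficiently negative, since the exponential factor is bounded; the correct obstruction is that $r(x)\sim |x|^{2a}$ at infinity, so only finitely many moments converge, whereas the paper's definition of a weight function requires all moments to be finite. Second, the double-root (Bessel) case needs the same style of argument: on $(0,\infty)$ the function $x^a e^{-b/x}$ behaves like $x^a$ at infinity and again has only finitely many finite moments, while on a bounded interval the boundary condition $q r\to 0$ at the finite right endpoint fails. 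Stating these exclusions in terms of ``all moments finite'' rather than ``finite moments'' would make the argument airtight.
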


We can extend Bochner's problem by asking what other differential operators $\delta$ have the monic orthogonal polynomials of $r$ as common eigenfunctions.  Let $\{p(x,n)\}$ be the sequence of monic orthogonal polynomials for some weight function $r$.  The set
$$D(r) = \{\delta\in\weyl: p(x,n)\ \text{is an eigenfunction of $\delta$ for all $n$}\}$$
forms a subalgebra of the algebra of differential operators.  For $(r,\epsilon)$ a Bochner pair, it turns out that the answer is very straightforward.
\begin{thm}[Miranian\cite{miranian2005matrix}\cite{miranian2005classical}]\label{scalar algebra result}
If $(r,\epsilon)$ is a Bochner pair, then
$$D(r) = C_\weyl(\epsilon) = \bbc[\epsilon].$$
\end{thm}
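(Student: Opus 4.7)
I would split the equality into the two reductions $D(r)=C_\weyl(\epsilon)$ and $C_\weyl(\epsilon)=\bbc[\epsilon]$, and prove each using the explicit form of $\epsilon$ from Theorem \ref{history lesson}.

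For $D(r)=C_\weyl(\epsilon)$, the crux is that the eigenvalues $\lambda_n$ of $\epsilon$ on the $p(x,n)$ are pairwise distinct. Writing $\epsilon=\partial^2q(x)+\partial(l(x)+q'(x))$ and reading off the top coefficient of $p(x,n)\cdot\epsilon$ gives $\lambda_n=n(n-1)q_2+n(l_1+2q_2)$, where $q_2,l_1$ are the leading coefficients of $q,l$; one checks injectivity in $n\in\bbn$ case by case using the constraints imposed by the Pearson equation and the boundary hypothesis. Granted distinctness, the $\lambda_n$-eigenspace of $\epsilon$ inside $\bbc[x]$ is exactly $\bbc\cdot p(x,n)$. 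Any $\delta\in C_\weyl(\epsilon)$ preserves this line and hence lies in $D(r)$; conversely, for $\delta\in D(r)$ with eigenvalues $\mu_n$, the commutator $[\delta,\epsilon]$ acts as the scalar $\mu_n\lambda_n-\lambda_n\mu_n=0$ on each $p(x,n)$, hence annihilates $\bbc[x]$, and since elements of $\weyl$ are determined by their action on polynomials, $[\delta,\epsilon]=0$.

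For $C_\weyl(\epsilon)=\bbc[\epsilon]$, the inclusion $\supseteq$ is immediate and the reverse I would prove by induction on order. Two preliminaries simplify the problem. First, $D(r)$ is closed under the formal $r$-adjoint $\dag$: the identity $\langle p(x,m)\cdot\delta^\dag,p(x,n)\rangle_r=\overline{\langle p(x,n)\cdot\delta,p(x,m)\rangle_r}$ together with orthogonality of the $p(x,n)$ forces $p(x,m)\cdot\delta^\dag=\overline{\mu_m}\,p(x,m)$; and since $r$ and the $p(x,n)$ are real, $\overline{\mu_n}=\mu_n$, so $\delta-\delta^\dag\in D(r)$ has zero eigenvalues and vanishes. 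Thus every element of $D(r)$ is formally $r$-self-adjoint. Second, a direct symbol computation shows the top coefficient of $\delta^\dag$ equals $(-1)^m$ times that of $\delta$, so $r$-self-adjointness confines $D(r)$ to operators of even order. Given $\delta\in D(r)$ of order $2k>0$ with top coefficient $a_{2k}(x)$, equating leading-order terms on both sides of $\delta\epsilon=\epsilon\delta$ yields the ODE $2ka_{2k}(x)q'(x)=2q(x)a_{2k}'(x)$, whose only polynomial solutions are scalar multiples of $q(x)^k$, matching the top coefficient of $\epsilon^k$. Subtracting the appropriate scalar multiple of $\epsilon^k$ strictly lowers the order of $\delta$, and by evenness the new order drops to at most $2k-2$; induction then closes the argument.

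The principal obstacle I foresee is justifying the leading-order commutation identity $2ka_{2k}q'=2qa_{2k}'$ cleanly from $[\delta,\epsilon]=0$; this is a short Weyl-algebra calculation in general, but requires care at each order to isolate the Poisson-bracket contribution from the lower-order remainder terms. The Hermite case is a mild degeneration, since $q(x)$ is constant and the equation collapses to $a_{2k}'(x)=0$, still forcing $a_{2k}$ to be a constant and matching the top coefficient of $\epsilon^k$, so the induction proceeds uniformly.
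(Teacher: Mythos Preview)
The paper does not prove this result; it is quoted with attribution to Miranian, so there is no in-paper argument to compare against. Your outline follows the standard route and is mostly sound, but one step is genuinely wrong.

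The assertion that every $\delta\in D(r)$ is formally $r$-self-adjoint is false: take $\delta=i\epsilon$, for which $(i\epsilon)^\dag=-i\epsilon$. The justification you give---that $r$ and the $p(x,n)$ are real, hence $\overline{\mu_n}=\mu_n$---does not follow, since $\mu_n$ depends on the (possibly complex) coefficients of $\delta$, not merely on $p(x,n)$. A related slip: the leading coefficient of $\delta^\dag$ is $(-1)^m\,\overline{a_m}$, not $(-1)^m a_m$, so even for a genuinely self-adjoint $\delta$ of odd order one only obtains $a_m\in i\,\bbr[x]$, not $a_m=0$.

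This gap matters precisely at the Hermite degeneration you flag. For Jacobi and Laguerre your induction survives without any parity input: the ODE $2q\,a_m'=m\,q'\,a_m$ forces $a_m\in\bbc\cdot q^{m/2}$, which is a nonzero polynomial only for even $m$, so odd-order commutants are excluded automatically and the subtraction step proceeds. For Hermite, $q$ is constant, the ODE collapses to $a_m'=0$, and nothing in your argument excludes odd $m$. A clean repair is to bypass the symbol calculation there entirely: the eigenvalue map $\delta\mapsto(\mu_n)_n$ is an injective ring homomorphism from $D(r)$ into $\bbc[n]$ (injectivity is immediate since the $p(x,n)$ span $\bbc[x]$; that the image lands in polynomials is the scalar case of the degree-filtration argument in \S3.1), and for Hermite one has $\Lambda(\epsilon)=-2n$, which already generates all of $\bbc[n]$. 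Hence $\Lambda(D(r))\subseteq\bbc[n]=\Lambda(\bbc[\epsilon])$ and injectivity gives $D(r)\subseteq\bbc[\epsilon]$.
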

\begin{remk}
We point out that $C_\weyll(\epsilon)$ may properly contain $C_\weyl(\epsilon)$.  For example, the operator $\epsilon = \partial^2(1-x^2) -\partial x$ commutes with the first-order operator $\partial (1-x^2)^{1/2}$, but the coefficients of the latter operator are not polynomial.
\end{remk}

The previous theorem shows that a second-order differential operator $\epsilon$ forming part of the Bochner pair $(r,\epsilon)$ generates the algebra $D(r)$, and is unique up to a scalar multiple.  Values of $\epsilon$ for various weight functions are listed in the table in Figure \ref{cop table II}.
\begin{figure}[h]
\begin{center}
\begin{tabular}{|l|c|}\hline
Weight   & $2$nd-order operator                         \\\hline
Hermite  & $\partial^2-2\partial x$                     \\
Laguerre & $\partial^2x + \partial(b + 1-x)$            \\
Jacobi   & $\partial^2(1-x^2) + \partial(b-a-(a+b+2)x)$ \\\hline
\end{tabular}
\caption{Classical Solutions to Bochner's Problem}
\label{cop table II}
\end{center}
\end{figure}

\subsection{Matrix Orthogonal Polynomials and Bochner's Problem}\label{matrix orthogonal polynomials}
We next review the basic theory of orthogonal matrix polynomials and Bochner's problem.
\begin{defn}\label{weight matrix definition}
A \vocab{weight matrix $w$} supported on an interval $(x_0,x_1)$ is defined to be a function $w: \bbr\rightarrow M_N(\bbr)$ satisfying the condition that $w$ vanishes outside of $(x_0,x_1)$, that everywhere in $(x_0,x_1)$ the matrix $w$ is entrywise-smooth, Hermitian, and positive-definite, and finally the condition that $w$ has finite moments:
$$\int x^m w(x)dx < \infty,\ \ \forall m\geq 0.$$
The interval $(x_0,x_1)$ where the matrix is nonsingular is called the \vocab{support of $w$}.
\end{defn}
Note that we work only with ``smooth" weight matrices in order to avoid more delicate analytic considerations.  For a survey on the analytic theory of matrix orthogonal polynomials, see \cite{damanik2008analytic}.  Every weight matrix $w$ defines a matrix-valued inner product $\langle \cdot,\cdot\rangle_w$ on the set $M_N(\bbc[x])$ of all $N\times N$ complex matrix polynomials by
$$\langle p,q\rangle_w := \int_\bbr p(x)w(x)q(x)^* dx,\ \ \forall p,q\in M_N(\bbc[x]).$$
Though it will not play an important role in our paper, the matrix-valued inner product above begets a traditional (scalar-valued) inner product by taking trace $\tr(\langle p,q\rangle_w)$.

A Gram-Schmidt type argument with our matrix-valued inner product shows that for each integer $n\geq 0$ there exists a polynomial $p_n$ of degree $n$ with nonsingular leading coefficient, uniquely defined up to its leading coefficient, such that $\langle p_n,p_m\rangle_w = 0I$ for all $m\neq n$.   We observe that orthogonality with respect to the matrix-valued inner product is a stronger condition than orthogonality with respect to $\tr(\langle p,q,\rangle_w)$.
\begin{defn}
We call a sequence of matrix polynomials $\{p(x,n)\}$ a \vocab{sequence of orthogonal matrix polynomials (OMP) for $w$} if $\deg(p(x,n)) = n$ for all integers $n\geq 0$, with non-singular leading coefficient, and
$$\langle p_n,p_m\rangle_w = 0I,\ \ \forall m\neq n.$$
\end{defn}
A OMP $\{p(x,n)\}$ for $w$ will be called \vocab{monic} if it satisfies the additional constraint that $p(x,n)$ has leading coefficient equal to the identity matrix $I$ for each fixed $n$.  Once again, there exists a unique sequence of monic OMP $\{p(x,n)\}$ for any fixed $w$.

Generalizing Favard's theorem, Dur\'an and L\'opez-Rodriguez proved that a sequence of polynomials $\{p(x,n)\}$ being the sequence of monic orthogonal matrix polynomials for some weight matrix $w$ is equivalent to the sequence $\{p(x,n)\}$ satisfying a three-term recurrence relation under certain regularity conditions.
\begin{thm}[Dur\'an and L\'opez-Rodriguez \cite{laredolectures}]
Suppose that $\{p(x,n)\}$ is a sequence of monic orthogonal matrix polynomials for a weight matrix $w$.  Then there exist sequences of complex-valued matrices $\{s_n\}$ and $\{t_n\}$ such that
\begin{equation}\label{matrix recurrence relation}
xp(x,n) = p(x,n+1) + s_np(x,n) + t_np(x,n-1),\ \forall n\geq 1
\end{equation}
Conversely, given reasonably nice sequences of marices $\{s_n\},\{t_n\}$, there exists a weight matrix $w$ for which the sequence of polynomials $\{p(x,n)\}$ defined by Equation \eqref{matrix recurrence relation} is a sequence of monic orthogonal matrix polynomials.
\end{thm}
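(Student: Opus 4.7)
For the forward direction, I would argue by expanding $xp(x,n)$ in the basis provided by the monic polynomials themselves. Since each $p(x,k)$ is monic of degree $k$, every matrix polynomial $q(x)$ of degree $m$ admits a unique expansion $q(x) = \sum_{k=0}^{m} A_k p(x,k)$ with $A_k \in M_N(\bbc)$, obtained by recursively peeling off leading terms. Applying this to $q(x) = xp(x,n)$, whose leading coefficient is $I$, gives
$$xp(x,n) = p(x,n+1) + \sum_{k=0}^{n} A_{k,n}\, p(x,k).$$
The coefficients are pinned down by the inner product: pairing on the right with $p(x,j)$ and using orthogonality, one has $A_{j,n} = \langle xp(x,n), p(x,j)\rangle_w \, h_j^{-1}$, where $h_j := \langle p(x,j), p(x,j)\rangle_w$ is invertible (Hermitian positive-definite). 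The crucial observation is that since $x$ is real and scalar, $\langle xf, g\rangle_w = \langle f, xg\rangle_w$. For $j < n-1$ the polynomial $xp(x,j)$ has degree at most $n-1$ and so lies in the left $M_N(\bbc)$-span of $p(x,0),\dots,p(x,n-1)$, each of which is orthogonal to $p(x,n)$. Hence $A_{j,n} = 0$ for $j < n-1$, and setting $s_n := A_{n,n}$ and $t_n := A_{n-1,n}$ yields the claimed three-term relation.

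For the converse, the plan is to reverse this construction. Given sequences $\{s_n\}, \{t_n\}$, one first defines polynomials $p(x,n)$ recursively by the three-term relation, with $p(x,0) = I$ and $p(x,-1) = 0$; since the recurrence forces monic leading coefficients, the $\{p(x,n)\}$ are automatically a basis in the sense used above. One then defines a matrix-valued linear functional $L : M_N(\bbc[x]) \to M_N(\bbc)$ by requiring $L(p(x,0)) = I$ and $L(p(x,n)) = 0$ for $n \geq 1$, extended by left-linearity. A direct induction using the recurrence shows $L(p(x,n) p(x,m)^*) = 0$ for $m \neq n$, and the ``reasonably nice'' hypothesis (that the $t_n$ are positive definite Hermitian of an appropriate form and the $s_n$ Hermitian) is exactly what keeps the norm matrices $h_n := L(p(x,n) p(x,n)^*)$ positive definite. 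Finally, one realizes $L$ as integration against a weight matrix $w$ by solving a matrix-valued Hamburger moment problem: the moments $\mu_k := L(x^k I)$, computable from $\{s_j, t_j\}$, assemble into a positive-definite block Hankel matrix, and under a growth/determinacy condition this yields an absolutely continuous matrix-valued measure, i.e.\ a weight matrix.

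The main obstacle is the converse, specifically the transition from the abstract functional $L$ to a concrete weight matrix $w$ that is smooth and supported on an interval. The algebraic portion (existence of $L$, bi-orthogonality, positivity of $h_n$) is a straightforward induction once the precise form of ``reasonably nice'' is pinned down. The analytic portion — namely, showing that the associated matrix Hamburger moment problem has a solution representable by a smooth weight matrix rather than only by a Borel matrix-valued measure — is where substantive regularity assumptions must enter, and is the reason the statement is hedged with ``reasonably nice'' rather than presented with sharp hypotheses. For a complete self-contained proof one would make these conditions explicit and invoke the matrix moment theorem of Durán and López-Rodriguez, to which the paper defers.
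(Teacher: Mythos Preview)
Your argument for the forward direction is the standard one and is correct; the self-adjointness of multiplication by $x$ with respect to $\langle\cdot,\cdot\rangle_w$ is exactly what truncates the expansion to three terms. Your sketch of the converse is also the right outline, and you correctly identify that the passage from the abstract moment functional to an honest weight matrix is where the analytic content lives and where the phrase ``reasonably nice'' is doing unspecified work.

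That said, there is nothing to compare against: the paper does not prove this theorem. It appears in the background section as a cited result of Dur\'an and L\'opez-Rodriguez, stated for orientation and then used as a black box. So your proposal is not a reconstruction of the paper's argument but rather a self-contained sketch of the classical proof one would find in the cited reference. As such it is appropriate in spirit, and your candid acknowledgment that the converse ultimately defers to the matrix moment theorem matches exactly how the paper treats it.
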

Intuitively, the sequences $\{s_n\}$ and $\{t_n\}$ determine the moments of the weight matrix, and vice versa.  In this way, there is a sort of dictionary between weight matrices and sequences of polynomials satisfying a recurrence relation of the above form \cite{sinap1996}\cite{grunbaum2008}\cite{duran2005structural}\cite{duran2007structural}\cite{duran2005survey}.

Matrix polynomials $p(x)$ have a natural action by matrix differential operators, ie. differential operators whose coefficients are matrix polynomials.  This action is on the right in order to make it compatible with the matrix-valued inner product defined by $w$ and the corresponding three-term recursion relation with coefficients on the left.  In particular, for polynomials $a_0(x),a_1(x),a_2(x)\in M_N(\bbc[x])$ the second-order matrix differential operator $\epsilon = a_0(x) + \partial a_1(x) + \partial^2 a_2(x)$ acts on a matrix polynomial $p(x)\in M_N(\bbc[x])$ by
$$p(x)\cdot\epsilon = p(x)a_0(x) + p'(x)a_1(x) + p''(x)a_2(x).$$
With this in mind, Bochner's problem was reposed by Dur\'{a}n \cite{duran1997matrix}\cite{duran2004orthogonal} as the following
\begin{prob}[Bochner's Problem for Matrix Differential Operators]\label{Bochner's Matrix Problem}
Determine all weight matrices $w(x)$ such that there exists a second-order matrix differential operator $\delta\in \mweyl$ for which the associated sequence of monic orthogonal matrix polynomials are simultaneously eigenfunctions.
\end{prob}
For reasons that will be discussed below, when such an $\delta$ exists it may be taken to be $w$-symmetric.  This motivates the following definition.
\begin{defn}\label{matrix Bochner pair definition}
We define a (matrix) \vocab{Bochner pair} to be a pair $(w,\delta)$ with $w$ a weight matrix and $\delta\in\mweyl$ a $w$-symmetric second-order differential operator for which the sequence of monic OMP for $w$ are simultaneously eigenfunctions.
\end{defn}
Thus we are interested in the question of what Bochner pairs exist.  More generally, we are interested in the question of calculating the algebra $D(w)$ of all matrix differential operators having the monic OMP of $w$ as simultaneous eigenfunctions.
\begin{defn}\label{D(w) definition}
Let $w(x)$ be a weight matrix, and let $p(x,n)$ be the associated sequence of monic orthogonal matrix polynomials.  Then the \vocab{algebra of matrix differential operators associated to $w$}, denoted $D(w)$, is the set of all $\delta\in\mweyl$ such that $p(x,n)$ is an eigenfunction of $\delta$ for all $n$.
\end{defn}

The behavior in the scalar case ($N=1$) leads us to ask questions such as whether the solutions to Bochner's problem for matrix differential operators must satisfy a form of Pearson's equation or a Rodrigues-type recurrence relation.  Dur\'{a}n and Gr\"{u}nbaum \cite{duran2004orthogonal} provide a partial result in this direction in the following theorem, restated in terms of formal adjoints.
\begin{thm}[Dur\'{a}n-Gr\"{u}nbaum]
Let $w$ be a weight matrix.  Then $D(w)$ contains a second-order differential operator if and only if there exists
$$\delta = \partial^2a_2 + \partial a_1 + a_0,$$
with $a_i\in M_N(\bbc[x])$ and $\deg(a_i)\leq i$ for all $i$ satisfying
\begin{enumerate}[(a)]
\item  $a_2w = wa_2^*$
\item  $w\cdot\delta^* = a_0w$ for $\delta^*$ the formal adjoint of $\delta$
\item  $a_2w$ and $(a_2w)' - a_1w$ vanish as $x$ approaches the endpoints of the support of $w$
\end{enumerate}
In this case, $(w,\delta)$ is a solution to Bochner's problem for matrix differential operators.
\end{thm}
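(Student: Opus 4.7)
The plan is to prove both directions by showing that conditions (a), (b), (c) are together equivalent to $\delta$ being formally $w$-symmetric, meaning $\langle f\cdot\delta,g\rangle_w = \langle f,g\cdot\delta\rangle_w$ for all $f,g\in M_N(\bbc[x])$, and by invoking the fact that any order-two operator in $D(w)$ can be replaced by a $w$-symmetric one via a linear combination with its $w$-adjoint.

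For the reverse direction, I would expand the pointwise identity $(f\cdot\delta)w = (fw)\cdot\delta^*$ (which is the operator form of $w\delta^* = \delta w$, equivalent to $\delta^\dag=\delta$) using the formula for $\delta^*$ from integration by parts, and match powers of derivatives of $f$. The coefficient of $f''$ gives (a); setting $f=I$ gives (b); and the coefficient of $f'$ gives a third interior identity (d): $a_1w + wa_1^* = 2(a_2w)'$, which is not among the listed hypotheses. To recover (d) from (a), (b), (c), I would differentiate (b), take the Hermitian conjugate (using $w^*=w$), and combine the two identities to see that $a_1w+wa_1^*$ and $2(a_2w)'$ agree up to an additive constant matrix; the boundary behavior in (c), together with the Hermiticity of $(a_2w)'$ that follows from (a), then pins this constant to zero. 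With $w$-symmetry established, the degree bounds $\deg(a_i)\le i$ ensure that $\delta$ preserves the filtration by degree, so that for the monic OMP $p(x,n)$ the remainder $p(x,n)\cdot\delta - \Lambda_n p(x,n)$ lies in the space of matrix polynomials of degree at most $n-1$, where $\Lambda_n$ is the leading coefficient of $p(x,n)\cdot\delta$. Pairing this remainder against any $q$ of degree at most $n-1$ and using $w$-symmetry, degree preservation, and orthogonality of $p(x,n)$ to lower degrees forces the pairing to vanish, and nondegeneracy of $\langle\cdot,\cdot\rangle_w$ then gives $p(x,n)\cdot\delta = \Lambda_n p(x,n)$.

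For the forward direction, suppose $\delta_0\in D(w)$ has order two. Writing $\delta_0 = \partial^2 a_2 + \partial a_1 + a_0$ and comparing leading coefficients on both sides of the eigenvalue relations $p(x,n)\cdot\delta_0 = \Lambda_n p(x,n)$ as $n$ varies forces the coefficients to be matrix polynomials with $\deg(a_i)\le i$. The $w$-adjoint $\delta_0^\dag$ again lies in $D(w)$, since $\langle p(x,n)\cdot\delta_0^\dag,p(x,m)\rangle_w = \langle p(x,n),p(x,m)\cdot\delta_0\rangle_w$ vanishes for $m\ne n$ by orthogonality, so a suitable linear combination of $\delta_0$ and $\delta_0^\dag$ produces a $w$-symmetric $\delta\in D(w)$; should $\delta_0+\delta_0^\dag$ drop below order two, then $i(\delta_0-\delta_0^\dag)$ does not, so at least one choice retains order two. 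From $\delta=\delta^\dag$ and the coefficient matching of the reverse analysis, (a) and (b) are immediate, and (c) is forced by the requirement that the defining integration-by-parts for $\delta^\dag$ produce no surviving boundary terms on the support of $w$.

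The main obstacle I expect is the implication (a)+(b)+(c) $\Rightarrow$ (d) in the reverse direction, since (b) only captures the $f=I$ specialization of the full operator-level $w$-symmetry condition; upgrading this to the interior identity on $a_1$ requires the combined use of differentiation, Hermitian conjugation, and the precise boundary data in (c). A secondary subtlety is the order-preservation in the symmetrization step of the forward direction, where one must argue that at least one of the two natural $w$-symmetric combinations of $\delta_0$ and $\delta_0^\dag$ remains genuinely of order two.
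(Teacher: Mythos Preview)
The paper does not give its own proof of this theorem: it is stated with attribution to Dur\'an--Gr\"unbaum \cite{duran2004orthogonal}, followed only by the interpretive remark that conditions (a) and (b) amount to formal $w$-symmetry of $\delta$, that (c) makes the formal $w$-adjoint an actual adjoint, and that together they imply the non-commutative Pearson equation. So there is no proof in the paper to compare against; your proposal is a genuine attempt to supply one.

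Your reverse direction is sound and in fact sharper than the paper's gloss. Matching coefficients in $w\delta^* = \delta w$ shows that formal $w$-symmetry is equivalent to (a), (b), and the Pearson identity (d): $a_1w+wa_1^*=2(a_2w)'$. Your derivation of (d) from (a)+(b)+(c) is correct: adding (b) to its Hermitian conjugate (using (a) to rewrite $(wa_2^*)''$ as $(a_2w)''$) gives $(a_1w+wa_1^*)' = 2(a_2w)''$, hence equality up to a constant $C$; then (c), together with its Hermitian conjugate via (a), forces $2(a_2w)'-(a_1w+wa_1^*)\to 0$ at the endpoints, so $C=0$. The eigenfunction argument from $w$-symmetry plus degree preservation is standard and correct.

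The forward direction has a gap. Your argument that $\delta_0^\dag\in D(w)$ is circular: you use the adjoint identity $\langle p(x,n)\cdot\delta_0^\dag,p(x,m)\rangle_w=\langle p(x,n),p(x,m)\cdot\delta_0\rangle_w$, but this presupposes that $\delta_0$ has a $w$-adjoint which is itself a differential operator with polynomial coefficients --- precisely what needs to be established. Even granting orthogonality of $p(x,n)\cdot\delta_0^\dag$ to all $p(x,m)$ with $m\neq n$, you cannot conclude it is a multiple of $p(x,n)$ without first knowing it is a polynomial of degree at most $n$. This is the nontrivial content of the Gr\"unbaum--Tirao theorem (stated separately in the paper), and you should invoke it directly rather than attempt a two-line justification.
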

Note that the first two conditions translate to the statement that $\delta$ is formally $w$-symmetric, while the third condition implies that the formal $w$-adjoint of $\delta$ is an adjoint (see Def. \ref{formal adjoint}).  In other words, the content of the above theorem is that a weight matrix $w$ has a second-order differential operator in $D(w)$ if and only if there exists a $w$-symmetric second-order differential operator.  The above conditions imply that $w$ satisfies the non-commutative Pearson equation
\begin{equation}\label{noncommutative pearson equation}
2(a_2w)' = a_1w + wa_1^*.
\end{equation}

\subsection{Adjoints of Differential Operators}
The study of adjoints of differential operators can be considered a subset of the study of adjoints of unbounded linear operators on certain well-chosen Hilbert spaces.  For this reason, we briefly recall the definition of an unbounded linear operator and its adjoint.  For a great reference on adjoints of unbounded linear operators, as well as adjoints of differential operators, see \cite{dunford1963linear}.
\begin{defn}
Let $\sheaf H$ be a Hilbert space.  An \vocab{unbounded linear operator} on $\sheaf H$ is a linear function $T: \sheaf D(T)\rightarrow V$, where $\sheaf D(T)$ is a dense subspace of $\sheaf H$, called the \vocab{domain of $T$}.  The adjoint of $T$ is an unbounded linear operator $T^*$ defined on the set
$$\sheaf D(T^*) := \{y\in \sheaf H: x\rightarrow \langle Tx,y\rangle\ \text{is continuous}\}$$
and satisfying $\langle Tx,y\rangle = \langle x,T^*y\rangle$ for all $x\in D(T),y\in D(T^*)$.
\end{defn}
Note that the fact that $D(T)$ is dense is important for the existence of $T^*$, which then exists as a consequence of the Hahn-Banach theorem.

Differential operators make great examples of unbounded linear operators.  An interesting question is when for given differential operator the adjoint is also a differential operator.
\begin{ex}
For example, consider the first-order differential operator $\delta = \partial a_1(x) + a_0(x)$ with $a_0(x),a_1(x)\in\bbc[x]$.  Given a weight function $r(x)$ supported on $(x_0,x_1)$, we may view this as an unbounded linear operator on the inner product space $L^2(r(x)dx)$, induced by the action of $\delta$ on $\bbc[x]$.

Using integration by parts, one finds that for polynomials $p(x),q(x)$:
\begin{align*}
\langle p(x)\cdot\delta,q(x)\rangle_r
  & = p(x_1)a_1(x_1)r(x_1)q(x_1)-p(x_0)a_0(x_0)r(x_0)q(x_0)\\
  & + \langle p(x),q(x)\cdot (-a_1(x)^*r(x)\partial a_1(x)^{-1} + a_0^*(x))\rangle_r.
\end{align*}
Thus if $a_1(x_0)r(x_0) = a_1(x_1)r(x_1) = 0$, we see that the adjoint of $\delta$ is the differential operator $\delta^\dag =-a_1(x)^*r(x)\partial a_1(x)^{-1} + a_0^*(x) $.  In particular, if $r(x)$ is constant on its support, and $a_1(x_i) = 0$ for $i=0,1$, then the adjoint $\delta^\dag$ of $\delta$ is $\delta^\dag = -a_1(x)^*\partial + a_0^*(x)$.
\end{ex}
A similar observation applies to matrix orthogonal polynomials, and this motivates the definition of a $*$-operation on the algebra of matrix differential operators.

The algebra $\mweyll$ of matrix differential operators with coefficients in the power series ring $\bbc[[x]]$ is equipped with a canonical $*$-operation, extending the usual $*$-operation on $M_N(\bbc[[x]])$ defined by Hermitian conjugates and satisfying
$$(xI)^* = xI,\ \ \ (\partial I)^* = -\partial I.$$

\begin{defn}\label{formal adjoint}
Let $w$ be a weight matrix supported on an interval $(x_0,x_1)$ containing $0$ and let $\delta\in \mweyll$.  A \vocab{formal adjoint of $\delta$ with respect to $w$}, or \vocab{formal $w$-adjoint of $\delta$} is the unique differential operator $\delta^\dag\in \mweyll$ defined on $(x_0,x_1)$ by
$$\delta^\dag := w\delta^* w^{-1}.$$
\end{defn}
(Note that this definition relies on the assumption that $w$ is Hermitian.)  In particular, the $*$-operation is exactly the formal $w$-adjoint for $w(x) = 1_{(x_0,x_1)}(x)I$.  The assumption that $0$ is in $(x_0,x_1)$ may always be achieved by means of an affine change of coordinates, and is necessary for $w$ to have a local representation as a power series based at $0$.  Moreover, the assumption that $w$ is positive-definite also implies that $w^{-1}$ has a power series expansion at $0$.  Consequently, the formal $w$-adjoint is indeed an element of $\mweyll$.  As a notational point, we will use $*$ to denote the canonical adjoint on $\mweyll$ and will use $\dag$ to denote the formal adjoint with respect to a particular weight matrix $w$ (the value of $w$ will be implied from the context).
\begin{defn}
Let $w$ be a weight matrix, and $\delta\in\mweyll$ a differential operator.  Then $\delta$ is called \vocab{formally $w$-symmetric} if $\delta^\dag = \delta$.
\end{defn}

We next define the adjoint of a differential operator.  To do so, we must define the Hilbert space upon which it acts.
\begin{defn}
Let $w$ be a weight matrix.  We define the \vocab{Hilbert space of $w$} to be
$$\sheaf H(w) := M_N(L^2(\tr(w)dx)).$$
We call a matrix differential operator $\delta\in \mweyll$ \vocab{amenable} if $p(x)\cdot\delta\in \sheaf H(w)$ for all matrix-valued polynomials $p(x)\in M_N(\bbc[x])$.
\end{defn}

\begin{defn}
Let $w$ be a weight matrix and let $\delta\in\mweyll$ be an amenable matrix differential operator.  An amenable matrix differential operator $\wt\delta\in\mweyl$ satisfying the identity
$$\langle p\cdot\delta,q\rangle_w = \langle p,q\cdot\wt \delta\rangle_w,\ \forall p,q\in M_N(\bbc[x]),$$
is called an \vocab{adjoint of $\delta$ with respect to $w$}, or \vocab{$w$-adjoint of $\delta$}.  If $\delta = \wt\delta$, then $\delta$ is called \vocab{$w$-symmetric}.
\end{defn}

Every differential operator has a formal $w$-adjoint, but not necessarily a $w$-adjoint.  Put another way, even though each differential operator $\delta$ will have an adjoint as an unbounded operator on $\sheaf H(w)$, this linear operator adjoint need not be a differential operator.
\begin{prop}
Let $w$ be a weight matrix and let $\delta\in\mweyll$ be an amenable matrix differential operator.  Then an adjoint of $\delta$ with respect to $w$, if one exists, is equal to the formal $w$-adjoint $\delta^\dag$.
\end{prop}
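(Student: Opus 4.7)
The plan is to verify that the formal $w$-adjoint $\delta^\dag = w\delta^* w^{-1}$ automatically satisfies the defining adjoint identity when paired against polynomials, so that nondegeneracy of the $w$-inner product, combined with the fact that a differential operator is determined by its action on polynomials, forces $\wt\delta = \delta^\dag$ as elements of $M_N(\widehat\Omega)$.

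Writing $\delta = \sum_{i=0}^n \partial^i a_i(x)$, I would first establish the identity $\langle p\cdot\delta, q\rangle_w = \langle p, q\cdot\delta^\dag\rangle_w$ for all matrix polynomials $p, q \in M_N(\bbc[x])$ by iterated integration by parts. Explicitly,
\[
\langle p\cdot\delta, q\rangle_w = \sum_i \int p^{(i)} a_i w q^* dx = \sum_i (-1)^i \int p\, (a_i w q^*)^{(i)} dx,
\]
and a direct computation using $\delta^* = \sum_i(-1)^i a_i^*\partial^i$ together with the formula $q\cdot\delta^\dag = \sum_i(-1)^i (qw a_i^*)^{(i)} w^{-1}$ shows that $w(q\cdot\delta^\dag)^* = \sum_i(-1)^i (a_i w q^*)^{(i)}$, which delivers the identity. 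Here $q\cdot\delta^\dag$ is well-defined as a smooth matrix-valued function on $(x_0,x_1)$ because $w$ is smooth and positive-definite there, even though it is only a power series near $0$ when viewed algebraically.

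The main obstacle lies in verifying the vanishing of the boundary contributions $\bigl[p^{(i-1-j)}(a_i w q^*)^{(j)}\bigr]_{x_0}^{x_1}$ that arise at each stage of iterated integration by parts. Each such term is a polynomial function multiplied by derivatives of $w$ evaluated at the endpoints. Since $w$ is supported in $(x_0,x_1)$ and satisfies the finite-moments condition -- so at $\pm\infty$ polynomial growth of $p$, $q$, $a_i$ is dominated by the decay of $w$, and at finite endpoints $w$ vanishes -- together with amenability of $\delta$ and $\wt\delta$ ensuring $L^2$-integrability of the intermediate expressions, these limits can be shown to vanish. This step requires separate treatment of bounded and unbounded endpoints and is the most delicate piece of the argument.

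Once the identity is secured, the hypothesis $\langle p\cdot\delta, q\rangle_w = \langle p, q\cdot\wt\delta\rangle_w$ combined with it yields $\langle p, q\cdot(\wt\delta - \delta^\dag)\rangle_w = 0$ for all $p \in M_N(\bbc[x])$. Nondegeneracy of $\langle\cdot,\cdot\rangle_w$ on matrix polynomials (from positive-definiteness of $w$ on $(x_0,x_1)$) then forces $q\cdot\wt\delta = q\cdot\delta^\dag$ as power series near $0$ for every polynomial $q$. Since a matrix differential operator in $M_N(\widehat\Omega)$ is determined by its action on $I, xI, x^2 I, \ldots$ (its coefficients being recovered inductively from these evaluations), this gives $\wt\delta = \delta^\dag$.
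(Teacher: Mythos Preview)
The paper states this proposition without proof, so there is no argument of record to compare against. On its merits, your proposal has a genuine gap in the boundary-term step, and the gap is fatal to the approach as written.

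You attempt to establish $\langle p\cdot\delta, q\rangle_w = \langle p, q\cdot\delta^\dag\rangle_w$ for \emph{all} polynomials $p,q$ by integration by parts with vanishing boundary contributions. But this identity is false in general: the paper's own example immediately following the proposition exhibits the Laguerre weight $r(x)=e^{-x}1_{(0,\infty)}(x)$ and $\delta=\partial$, for which $\langle 1\cdot\partial,1\rangle_r=0$ while $\langle 1,1\cdot\partial^\dag\rangle_r=1$. Your justification that ``at finite endpoints $w$ vanishes'' is simply not part of the definition of a weight matrix (Definition~\ref{weight matrix definition} only requires $w$ to vanish \emph{outside} $(x_0,x_1)$), and the example shows it can fail. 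Nor does amenability of $\delta$ and $\wt\delta$ control the mixed boundary expressions $[p^{(i-1-j)}(a_i w q^*)^{(j)}]_{x_0}^{x_1}$, which involve derivatives of $w$ at the endpoints. In effect your argument, if it went through, would prove the stronger (and false) statement that $\delta^\dag$ is \emph{always} a $w$-adjoint; the proposition only asserts the conditional claim that \emph{if} some $w$-adjoint $\wt\delta$ exists, then $\wt\delta=\delta^\dag$.

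A route that avoids this obstruction is to localize: for smooth $\phi,\psi$ with compact support in $(x_0,x_1)$, integration by parts yields $\int(\phi\cdot\delta)w\psi^* = \int\phi\, w(\psi\cdot\delta^\dag)^*$ with no boundary terms, so $\delta^\dag$ is the distributional adjoint of $\delta$ on the interior. One must then argue that the hypothesized $\wt\delta$, which satisfies the adjoint identity on polynomials, agrees with this distributional adjoint as a differential operator near $0$; this uses density of polynomials in $\mathcal H(w)$ together with locality of differential operators, and some care is needed because $\delta$ is unbounded. Your concluding nondegeneracy step (that a differential operator in $M_N(\widehat\Omega)$ is determined by its action on $x^kI$) is correct and would finish the argument once the local comparison is secured.
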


\begin{ex}
Consider the weight function $r(x) = e^{-x}1_{(0,\infty)}(x)$ supported on the interval $(0,\infty)$.  The formal $r$-adjoint of the differential operator $\partial$ is
$$\partial^\dag = e^{-x}\partial^*e^x = e^{-x}(-\partial)e^x = -\partial + 1.$$
However,
$$\langle 1\cdot\partial, 1\rangle_r = \langle 0,1\rangle_r = 0$$
and
$$\langle 1,1\cdot \partial^\dagger\rangle_r = \langle 1,1\rangle_r = 1.$$
Therefore $\partial^\dagger$ is not an $r$-adjoint for $\partial$ and thus $\partial$ has no $r$-adjoint.

As another example, the formal adjoint of the operator $\partial x$ is
$$(\partial x)^\dag = e^{-x}(\partial x)^*e^x = e^{-x}(-x\partial)e^x = -\partial x + x-1.$$
Moreover, integration by parts shows us that
$$\langle p'(x)x,q(x)\rangle_r = \langle p(x),-q'(x)x + q(x)(x-1)\rangle_r$$
for all polynomials $p(x),q(x)\in\bbc[x]$.  Therefore $-\partial x + x-1$ is in fact an $r$-adjoint of $\partial x$.
\end{ex}

Expanding on these examples, we have the following lemma
\begin{lem}\label{first order adjoint}
Let $\nu = \partial f_1(x) + f_0(x)$ be a matrix differential operator for $f_1,f_0\in M_N(\bbc[x])$.  If $f_1(x_i)w(x_i) = 0$ for $i=0,1$ then $\nu$ has a $w$-adjoint.
\end{lem}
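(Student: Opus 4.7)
My plan is to show that the formal $w$-adjoint $\nu^\dag := w\nu^* w^{-1}$, which a priori lies in $\mweyll$, actually serves as a $w$-adjoint for $\nu$. By the preceding proposition any $w$-adjoint of $\nu$ must coincide with $\nu^\dag$, so the task reduces to verifying the integral identity $\langle p\cdot\nu, q\rangle_w = \langle p, q\cdot\nu^\dag\rangle_w$ for all matrix polynomials $p, q$, together with the amenability of $\nu^\dag$.

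First I record the explicit form of $\nu^\dag$. Specializing the formula from the introductory example to $\nu = \partial f_1 + f_0$ gives
$$\nu^\dag = w f_0^* w^{-1} - w' f_1^* w^{-1} - w (f_1')^* w^{-1} - \partial\, w f_1^* w^{-1}.$$
Next I expand the left-hand side of the identity as
$$\langle p\cdot\nu, q\rangle_w = \int_{x_0}^{x_1}\bigl(p'(x) f_1(x) + p(x) f_0(x)\bigr)\, w(x)\, q(x)^*\, dx,$$
and integrate the $p'$-summand by parts. The resulting surface contribution is $\bigl[p(x) f_1(x) w(x) q(x)^*\bigr]_{x_0}^{x_1}$, and this vanishes by the standing hypothesis $f_1(x_i) w(x_i) = 0$. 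I then expand $q\cdot\nu^\dag$ using the displayed formula, take the Hermitian conjugate (observing that $w^* = w$, $(w')^* = w'$ and $(w^{-1})^* = w^{-1}$ because $w$ is Hermitian positive-definite), and match the result term-by-term against the interior integrals remaining after the integration by parts. Every factor $p w w^{-1}$ or $p w' w^{-1}$ collapses in the obvious way, and the four summands line up exactly.

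The main technical obstacle is careful bookkeeping across the non-commutative matrix product: one must track where Hermitian conjugates land and must move the weight factors $w$ and $w^{-1}$ past $\partial$ in the right order. The boundary condition $f_1(x_i) w(x_i) = 0$ is exactly what is needed to kill the surface term, and no further assumptions enter into the integral identity. Amenability then reduces to verifying that the scalar integrals defining $\langle p, q\cdot\nu^\dag\rangle_w$ are convergent for polynomial $p, q$: the potentially singular factor $w f_1^* w^{-1}$ appearing in $\nu^\dag$ is always paired against $w$ in the inner product integrand, so after cancellation the integrand is a polynomial-matrix combination of $w$ and $w'$, and convergence follows from the finite-moment condition on $w$ together with an integration by parts against $w'$ that again leaves no boundary contribution.
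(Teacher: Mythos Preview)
Your proof is correct and takes essentially the same approach as the paper: integrate the $p'f_1$ term by parts, observe that the boundary term $p(x)f_1(x)w(x)q(x)^*\big|_{x_0}^{x_1}$ vanishes by hypothesis, and identify what remains as $\langle p, q\cdot\nu^\dag\rangle_w$. The paper compresses all of this into a single displayed line and does not write out the explicit formula for $\nu^\dag$ or discuss amenability at all; your treatment is more detailed on both points, but the underlying argument is identical.
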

\begin{proof}
For any polynomials $p(x),q(x)\in M_N(\bbc[x])$, integration by parts tells us that
$$\langle p(x)\cdot\nu,q(x)\rangle_w = p(x)f_1(x)w(x)q^*(x)|_{x_0}^{x_1} + \langle p(x),q(x)\cdot\nu^\dag\rangle.$$
Thus if $f_1(x_i)w(x_i) = 0$ for $i=0,1$ the formal $w$-adjoint $\nu^\dag$ of $\nu$ is the $w$-adjoint.
\end{proof}

The application of adjoints of differential operators to our situation is provided by the following theorem.
\begin{thm}[Gr\"{u}nbaum-Tirao \cite{grunbaum2007}]
Let $w$ be a weight matrix, and let $\delta\in D(w)$.  Then a $w$-adjoint of $\delta$ exists and is in $D(w)$.  The operator $\delta\mapsto\delta^\dag$ is an involution on $D(w)$ giving $D(w)$ the structure of a $*$-algebra.
\end{thm}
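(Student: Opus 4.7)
The plan is to produce the $w$-adjoint as the formal $w$-adjoint $\delta^\dag = w \delta^* w^{-1}$ and then show it actually lands inside $\mweyl$ and inherits the eigenfunction property. Fix $\delta \in D(w)$ with $p(x,n)\cdot\delta = \Lambda_n p(x,n)$ and set $H_n := \langle p(x,n),p(x,n)\rangle_w$. The operator $\delta^\dag \in \mweyll$ is well-defined as a power-series operator in a neighborhood of $0$ using that $w$ is Hermitian and positive-definite. Repeated integration by parts gives, for $p,q \in M_N(\bbc[x])$, the identity
$$\langle p\cdot\delta,q\rangle_w = \langle p,q\cdot\delta^\dag\rangle_w + B(p,q),$$
where $B(p,q)$ is a sum of boundary contributions at the endpoints $x_0, x_1$ involving derivatives of $p$, $q$, $w$ and the coefficients of $\delta$.

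The first main step is to show $B(p,q) = 0$. In order-two, this is exactly condition (c) of the Dur\'an--Gr\"unbaum theorem quoted earlier, and the generalization to higher-order $\delta \in D(w)$ proceeds analogously: one shows by induction on order that if $\delta$ maps $M_N(\bbc[x])$ into itself (which every element of $D(w)$ does) and preserves the inner product structure with a Hermitian weight of finite moments, then the coefficients of $\delta$ and the decay of $w$ at the endpoints conspire to annihilate all boundary terms. This analytic verification is the principal obstacle: one must combine the rapid decay of $w$ near $x_0, x_1$ with bounds on the coefficient degrees of $\delta$ (a degree-preserving property inherited from $\delta \in D(w)$) to beat any polynomial growth of $p, q$ against it.

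With $B(p,q)=0$, expand $p(x,m)\cdot\delta^\dag = \sum_k c_{m,k}\, p(x,k)$ formally in the OMP basis. The adjoint identity forces
$$\Lambda_n H_n\,\delta_{nm} = \langle p(x,n)\cdot\delta, p(x,m)\rangle_w = \langle p(x,n), p(x,m)\cdot\delta^\dag\rangle_w = H_n\, c_{m,n}^{*},$$
so $c_{m,k} = 0$ for $k \neq m$ and $c_{m,m} = H_m\Lambda_m^{*}H_m^{-1}$. In particular $\delta^\dag$ sends $M_N(\bbc[x])$ to itself and has each $p(x,m)$ as an eigenfunction. A differential operator in $\mweyll$ of finite order which preserves $M_N(\bbc[x])$ must have polynomial coefficients: evaluating successively on $I, xI, x^2 I, \ldots$ and solving the resulting triangular system for $a_0, a_1, \ldots, a_k$ shows $a_i \in M_N(\bbc[x])$. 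Hence $\delta^\dag \in \mweyl$, and by the eigenfunction property $\delta^\dag \in D(w)$.

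The $*$-algebra structure then follows algebraically. Involutivity $(\delta^\dag)^\dag = \delta$ follows from applying the adjoint identity twice and using that $\{p(x,n)\}$ spans $M_N(\bbc[x])$, so $\langle\cdot,\cdot\rangle_w$ separates points. Conjugate-linearity $(\alpha\delta_1+\beta\delta_2)^\dag = \bar\alpha\,\delta_1^\dag + \bar\beta\,\delta_2^\dag$ and reversal $(\delta_1\delta_2)^\dag = \delta_2^\dag\,\delta_1^\dag$ are direct consequences of the adjoint identity, as both sides of each equation produce the same element when paired against arbitrary $p,q \in M_N(\bbc[x])$. The hard part throughout remains the analytic input of Step~1; everything after is algebraic bookkeeping.
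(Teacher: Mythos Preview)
The paper does not give a proof of this theorem; it is stated as a cited result of Gr\"unbaum and Tirao \cite{grunbaum2007}, so there is no in-paper argument to compare against.

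On its own merits, your proposal has a genuine gap precisely where you flag it: you never actually establish that the boundary terms $B(p,q)$ vanish for an arbitrary $\delta\in D(w)$. Your sketch invokes ``rapid decay of $w$ near $x_0,x_1$,'' but the paper's definition of a weight matrix (Definition~\ref{weight matrix definition}) assumes only finite moments, not any decay. Concretely, the Laguerre weight $x^b e^{-x}1_{(0,\infty)}$ with $b$ close to $-1$ does not vanish at the left endpoint at all, and the paper's own example following Definition~\ref{formal adjoint} shows that for this weight the operator $\partial$ has a formal $r$-adjoint but \emph{no} $r$-adjoint, precisely because the boundary term fails to vanish. So the claim that degree-filtration preservation of $\delta$ plus finite moments of $w$ ``conspire to annihilate all boundary terms'' is not a given and cannot be dispatched by saying it ``proceeds analogously'' to the order-two Dur\'an--Gr\"unbaum case; that case works because of the specific structural conditions (a)--(c) listed there, and you have not shown analogues hold for higher-order elements of $D(w)$.

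Once the vanishing of $B(p,q)$ is granted, your remaining steps (reading off the eigenvalues $H_m\Lambda_m^*H_m^{-1}$, deducing polynomial coefficients by evaluating on $I,xI,x^2I,\dots$, and verifying the $*$-algebra axioms) are correct and routine. But the analytic input you defer is exactly the content of the theorem, and your proposal does not supply it.
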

As a consequence, we have the following corollary originally proved in \cite{grunbaum2007}.
\begin{cor}
Let $w$ be a weight matrix.  Then $D(w)$ contains a differential operator of order $m$ if and only if $D(w)$ contains a $w$-symmetric differential operator of order $m$.
\end{cor}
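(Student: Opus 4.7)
The backward direction is immediate: a $w$-symmetric differential operator of order $m$ in $D(w)$ is in particular a differential operator of order $m$ in $D(w)$. So the work is in the forward direction.

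My plan is to symmetrize a given $\delta \in D(w)$ of order $m$ using the involution $\dag$, which is legitimate because the preceding Grünbaum-Tirao theorem guarantees that $\delta^\dag \in D(w)$ whenever $\delta \in D(w)$. The naive candidate is $\delta + \delta^\dag$, which is $w$-symmetric since $\dag$ is an involution on $D(w)$. The trouble is that its leading coefficient may vanish, so it need not have order exactly $m$. To get around this, I would consider the two symmetrizations
\[
\delta_+ := \delta + \delta^\dag, \qquad \delta_- := i(\delta - \delta^\dag).
\]
Both are $w$-symmetric (for $\delta_-$, use that $\dag$ is conjugate-linear on scalars: $(i\eta)^\dag = -i\eta^\dag$), and both lie in $D(w)$.

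The key technical step is computing the leading coefficient of $\delta^\dag$ in terms of that of $\delta$. Writing $\delta = \partial^m a_m + (\text{lower order})$, I would first observe that under the $*$-operation on $\mweyll$ (extending Hermitian conjugation with $(\partial I)^* = -\partial I$ and acting on products by reversal), one gets
\[
\delta^* = (-1)^m \partial^m a_m^* + (\text{lower order}),
\]
after using the commutation $g(x)\partial - \partial g(x) = g'(x)$ (a consequence of our right-action convention) to move $a_m^*$ past $\partial^m$. Conjugating by $w$ and pushing $w$ through $\partial^m$ the same way yields
\[
\delta^\dag = w\delta^* w^{-1} = (-1)^m \partial^m\bigl(w a_m^* w^{-1}\bigr) + (\text{lower order}).
\]
Hence the leading coefficients of $\delta_+$ and $\delta_-$ are $a_m + (-1)^m w a_m^* w^{-1}$ and $i\bigl(a_m - (-1)^m w a_m^* w^{-1}\bigr)$, respectively.

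The main (and really only) obstacle is ruling out that both of these leading coefficients vanish simultaneously. But if both did vanish, adding the two equations would give $2a_m = 0$, contradicting that $\delta$ has order exactly $m$. Therefore at least one of $\delta_+$, $\delta_-$ is a $w$-symmetric element of $D(w)$ of order exactly $m$, which completes the proof.
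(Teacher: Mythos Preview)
Your proof is correct and follows essentially the same approach as the paper: both form the two $w$-symmetric operators $\delta+\delta^\dag$ and $i(\delta-\delta^\dag)$ and argue that at least one retains order $m$. The paper avoids your explicit leading-coefficient computation by simply noting that $\delta = \tfrac{1}{2}(\delta+\delta^\dag) - \tfrac{i}{2}\bigl(i(\delta-\delta^\dag)\bigr)$, so since each summand has order $\leq m$ and $\delta$ has order $m$, one of them must have order exactly $m$; your version reaches the same conclusion via a direct (and correct) calculation of the order-$m$ coefficients.
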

\begin{proof}
If $D(w)$ contains a second-order matrix differential operator $\omega$, then by the previous theorem $D(w)$ contains $\omega + \omega^\dag$ and $i(\omega - \omega^\dag)$.  Both operators are clearly symmetric, and by definition of the adjoint of order at most $m$.  Since
$$\omega = \frac{1}{2}(\omega + \omega^\dag) - \frac{1}{2}i(i(\omega - \omega^\dag))$$
and $\omega$ is of order $m$, at least one of the two symmetric operators must be order $m$.
\end{proof}

\section{Proof of the Main Theorem}
\subsection{Degree-Preserving Differential Operators}
\begin{defn}
We call a matrix differential operator $\delta\in \mweyl$ \vocab{degree-filtration preserving} if for all polynomials $q(x)\in M_N(\bbc[x])$, the degree of $q(x)\cdot\delta$ is no greater than the degree of $q(x)$.  We call $\delta$ \vocab{degree-preserving} if the degree of $q(x)\cdot\delta$ is equal to the degree of $q(x)$ for all $q(x)\in M_N(\bbc[x])$.  In particular, degree-preserving differential operators necessarily act injectively on the algebra of matrix polynomials.
\end{defn}
We use $\mweyl^F$ to denote the subalgebra of $\mweyl$ of degree-filtration preserving matrix differential operators.  Due to its distinguished role in the following, we fix the notation $s = \partial x$.
\begin{lem}
The subalgebra $\mweyl^F$ is equal to the $M_N(\bbc)$-subalgebra of $\mweyl$ generated by $s$ and $\partial$
\end{lem}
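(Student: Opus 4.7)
The plan is to prove the two inclusions separately. For the easy direction, the $M_N(\bbc)$-subalgebra generated by $s$ and $\partial$ lies in $\mweyl^F$: the operator $s = \partial x$ acts on $x^n I$ by $nx^n I$ so preserves degree, $\partial$ acts by $x^n I\mapsto nx^{n-1}I$ and so strictly lowers degree, and right-multiplication by a constant matrix $A\in M_N(\bbc)$ does not change degree. Any word in these three kinds of generators therefore preserves the degree filtration, as does any linear combination of such words.

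For the reverse inclusion, I would take an arbitrary $\delta\in\mweyl$, expand each coefficient as a polynomial in $x$ to write $\delta = \sum_{i,j\ge 0} \partial^i A_{i,j} x^j$ with $A_{i,j}\in M_N(\bbc)$ (finitely many nonzero), and compute
$$(x^n B)\cdot\delta \;=\; \sum_{i,j}\frac{n!}{(n-i)!}\,x^{n-i+j}\,BA_{i,j}.$$
Gathering this expansion by the shift $d=j-i$ and using that for each fixed $d$ the coefficients $\frac{n!}{(n-i)!}$, viewed as functions of $n$, are linearly independent falling factorials (one for each $i$), the degree-filtration hypothesis forces $B A_{i,j} = 0$ for all $B\in M_N(\bbc)$ and all $(i,j)$ with $j>i$. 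Hence $A_{i,j}=0$ whenever $j>i$, so $\delta = \sum_{i\ge j\ge 0}\partial^i A_{i,j} x^j$.

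The last step is to express each building block $\partial^i x^j$ with $i\ge j$ in terms of $s$ and $\partial$. The key identity is
$$\partial^j x^j \;=\; s(s-1)\cdots (s-j+1),$$
which I would verify by comparing the actions on $x^n$: the left side sends $x^n$ to $\frac{n!}{(n-j)!}x^n$, and so does the right side since $s$ acts diagonally on $x^n$ with eigenvalue $n$. Then $\partial^i x^j = \partial^{i-j}\cdot\partial^j x^j = \partial^{i-j}\,s(s-1)\cdots(s-j+1)$, which one checks on $x^n$ gives $\frac{n!}{(n-i)!}x^{n-i+j}$ as required. Since constant matrices commute past $\partial$ (i.e.\ $\partial A = A\partial$ for $A\in M_N(\bbc)$), we may rewrite
$$\delta \;=\; \sum_{i\ge j\ge 0} A_{i,j}\,\partial^{i-j}\,s(s-1)\cdots(s-j+1),$$
placing $\delta$ in the subalgebra generated by $s,\partial$ over $M_N(\bbc)$.

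The main obstacle is the bookkeeping in the middle step: cleanly isolating the constraint that the degree-filtration property imposes on the matrices $A_{i,j}$. The slickest formulation is to filter $\mweyl$ by the shift $d=j-i$, observing that the terms with $d>0$ strictly raise degree while those with $d\le 0$ do not, and then using linear independence of distinct falling factorials to conclude that the $d>0$ components of $\delta$ must vanish coefficient by coefficient.
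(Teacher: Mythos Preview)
Your proof is correct and follows essentially the same strategy as the paper: show the easy containment, argue that the coefficient $a_i(x)$ of $\partial^i$ in $\delta$ has degree at most $i$, and then invoke the identity $\partial^j x^j = s(s-1)\cdots(s-j+1)$ to rewrite everything in terms of $s$ and $\partial$. The only difference is in the middle step: the paper tests $\delta$ on the single monomial $x^j I$ where $j$ is the minimal index with $\deg a_j > j$ (a minimal-counterexample argument), whereas you test on all $x^n B$ and appeal to the linear independence of the falling factorials $n!/(n-i)!$ as polynomials in $n$; both arguments reach the same conclusion and the paper's is marginally slicker.
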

\begin{proof}
Note that both $s$ and $\partial$ are degree-filtration preserving, and therefore the $M_N(\bbc)$-subalgebra of $\mweyl$ that they generate is contained in $\mweyl^F$.  Thus to prove our lemma, it suffices to show the opposite containment.  Suppose that $\delta$ is degree-filtration preserving, of order $n$.  Since $\delta\in \mweyl$, we know that
$$\delta = \sum_{i=0}^n \partial^ia_i(x)$$
for some $a_i(x)\in M_N(\bbc[x])$.

We claim that $\deg(a_i(x))\leq i$ for all $0\leq i\leq n$.  To see this, suppose otherwise.  Then let $j$ be the smallest nonnegative integer satisfying $\deg a_j(x) > j$.  Then
$$x^j I \cdot\delta = \sum_{i=0}^j \frac{j!}{(j-i)!} x^{j-i}a_i(x)$$
is a polynomial of degree greater than $j$, contradicting the assumption that $\delta$ is degree-filtration preserving.  This proves our claim.

Next note that for all integers $j\geq 1$,
$$\partial^jx^j = s(s-1)(s-2)\dots(s-j+1).$$
Therefore if $a(x)\in M_N(x)$ is of degree $\leq j$, then $\partial^ja_j(x)$ is in the $M_N(\bbc)$-subalgebra of $\mweyl$ generated by $\partial$ and $s$.  From this it follows that $\delta$ is in the subalgebra of $\mweyl$ generated by $s$ and $\partial$.  This proves our lemma.
\end{proof}
\begin{remk}
The expression $\partial^jx^j = s(s-1)(s-2)\dots(s-j-1)$ found in the proof above is exactly the reason why the substitution $x = e^t$ may be used to re-express an Euler-Cauchy equation as a linear ordinary differential equation with constant coefficients.
\end{remk}

\begin{prop}
The natural map
$$\bigoplus_{n=0}^\infty \partial^n M_N(\bbc[s]) \rightarrow \mweyl^F$$
is an isomorphism of $M_N(\bbc[s])$-modules.  In particular, every $\delta\in \mweyl^F$ has a natural expression of the form
$$\delta = \sum_{i=0}^n\partial^ia_i(s)$$
for some integer $n\geq 0$ and some matrices $a_i(s)\in M_N(\bbc[s])$.
\end{prop}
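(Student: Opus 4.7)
The plan is to leverage the identity $\partial^j x^j = s(s-1)\cdots(s-j+1)$ from the preceding proof, which (abbreviating $[s]_j := s(s-1)\cdots(s-j+1)$) reads $\partial^j x^j = [s]_j$ in $\mweyl$. The same preceding proof shows that every $\delta\in\mweyl^F$ of order $n$ admits a standard presentation
\[
\delta = \sum_{i=0}^n \partial^i a_i(x),\qquad a_i(x)\in M_N(\bbc[x]),\ \deg a_i(x) \leq i,
\]
and the whole proposition reduces to a change-of-basis between the monomials $\{\partial^i x^k:0\leq k\leq i\}$ and $\{\partial^j [s]_k:j,k\geq 0\}$ under the index-set bijection $(i,k)\leftrightarrow(i-k,k)$.

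For surjectivity, I expand $a_i(x) = \sum_{k=0}^i c_{ik}x^k$ with $c_{ik}\in M_N(\bbc)$, use that constant matrices commute with $\partial$ in $\mweyl$ (immediate from the right-action definition), and apply $\partial^i x^k = \partial^{i-k}[s]_k$ for $k\leq i$. Reindexing by $j = i-k$ produces
\[
\delta = \sum_{j=0}^n \partial^j b_j(s),\qquad b_j(s) := \sum_{k=0}^{n-j} c_{j+k,k}[s]_k \in M_N(\bbc[s]),
\]
exhibiting $\delta$ in the image of the natural map. For injectivity, suppose $\sum_{j=0}^n \partial^j b_j(s) = 0$; expand each $b_j(s) = \sum_k d_{jk}[s]_k$ uniquely in the falling-factorial basis of $\bbc[s]$, so that using $\partial^j[s]_k = \partial^j\partial^k x^k = \partial^{j+k}x^k$ the identity becomes $\sum_{j,k} d_{jk}\partial^{j+k}x^k = 0$ in $\mweyl$. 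The $M_N(\bbc)$-linear independence of the standard basis $\{\partial^i x^k: i,k\geq 0\}$ of $\mweyl$ then forces all $d_{jk}=0$, hence all $b_j=0$.

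The main obstacle is modest and purely one of bookkeeping: verifying that the reparametrization $(i,k)\leftrightarrow(i-k,k)$ is a genuine bijection between the two index sets, and that the falling factorials form a $\bbc$-basis of $\bbc[s]$ (clear, since $[s]_k$ has leading term $s^k$). Nothing deeper is required, and both halves of the argument follow once these identifications are in place.
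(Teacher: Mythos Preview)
Your proof is correct and follows essentially the same approach as the paper, though you supply considerably more detail. The paper's proof is a one-line invocation of the commutation relation $s\partial^j = \partial^j(s+j)$ (hence $a(s)\partial^j = \partial^j a(s+j)$), from which it simply asserts ``the above follows''; you instead make the change-of-basis explicit via the falling-factorial identity $\partial^i x^k = \partial^{i-k}[s]_k$ and the bijection $(i,k)\leftrightarrow(i-k,k)$, which is the same mechanism written out concretely.
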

\begin{proof}
Note that $s\partial^j = \partial^j (s+j)$, and therefore $a(s)\partial^j = \partial^ja(s+j)$ for all $a(s)\in M_N(\bbc[s])$.  Using this relation, the above follows.
\end{proof}

\begin{prop}
Let $w$ be a weight matrix, and let $p(x,n)$ be a sequence of monic orthogonal matrix polynomials for $w$.  If $\delta\in D(w)$, then $\delta$ is degree-filtration preserving.  Consequently $\delta =\sum_{i=0}^n\partial^i a_i(s)$, for some $a_i(s)\in M_N(\bbc[s])$, in which case
$$p(x,n)\cdot \delta = a_0(n)p(x,n),\ \forall n\in\bbz_+.$$
\end{prop}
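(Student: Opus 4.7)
The plan is three-fold: first establish that $\delta$ preserves the degree filtration on $M_N(\bbc[x])$, then invoke the two preceding results to obtain the normal form $\delta = \sum_i \partial^i a_i(s)$, and finally extract the eigenvalue $a_0(n)$ by a leading-coefficient comparison.

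For the degree-filtration step, since each $p(x,n)$ is monic of degree $n$, the sequence $\{p(x,n)\}_{n\geq 0}$ is a free left $M_N(\bbc)$-module basis of $M_N(\bbc[x])$ that is triangular with respect to the degree filtration. In particular, any $q(x)\in M_N(\bbc[x])$ of degree at most $m$ admits a decomposition $q(x) = \sum_{n=0}^m c_n p(x,n)$ with $c_n\in M_N(\bbc)$. Writing $\Lambda_n\in M_N(\bbc)$ for the eigenvalue of $p(x,n)$ under $\delta$, so that $p(x,n)\cdot\delta = \Lambda_n p(x,n)$, we find
$$q(x)\cdot\delta = \sum_{n=0}^m c_n\Lambda_n p(x,n),$$
which has degree at most $m$, so $\delta\in \mweyl^F$.

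Having done that, the preceding lemma places $\delta$ in the $M_N(\bbc)$-subalgebra of $\mweyl$ generated by $s$ and $\partial$, and the preceding proposition then writes it uniquely as $\delta = \sum_{i=0}^k \partial^i a_i(s)$ for some $k\geq 0$ and $a_i(s)\in M_N(\bbc[s])$. To identify $\Lambda_n$, note that the lower-order terms of $p(x,n)$ contribute terms of degree strictly less than $n$ under $\delta$ (again by degree-filtration preservation), so the degree-$n$ part of $p(x,n)\cdot\delta$ equals the degree-$n$ part of $x^n I\cdot\delta$. Using that $x^m I \cdot s = m\, x^m I$ and hence $x^m I \cdot a_i(s) = a_i(m)x^m$, one computes
$$x^n I\cdot\delta = \sum_{i=0}^{\min(k,n)} \frac{n!}{(n-i)!}\, a_i(n-i)\, x^{n-i},$$
whose degree-$n$ term comes only from $i=0$ and equals $a_0(n)x^n$. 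Since $p(x,n)$ has leading coefficient $I$, comparing leading coefficients with $\Lambda_n p(x,n)$ yields $\Lambda_n = a_0(n)$.

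The only real subtlety is keeping straight the right-action convention: the operator $\partial^i a_i(s)$ acts by first differentiating $i$ times and then applying $a_i(s)$ (whose matrix coefficients sit to the right of the powers of $s$), so one must be careful when verifying the formula $x^m I \cdot s^k = m^k\, x^m I$ and its consequence for $a_i(s)$. Once that bookkeeping is pinned down, the leading-coefficient extraction is immediate, and no further analysis is required.
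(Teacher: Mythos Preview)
Your proof is correct and follows essentially the same approach as the paper: expand an arbitrary polynomial in the basis $\{p(x,n)\}$ to show degree-filtration preservation, invoke the preceding structural results to obtain the normal form $\sum_i \partial^i a_i(s)$, and then compare leading coefficients of $p(x,n)\cdot\delta$ with those of $\Lambda_n p(x,n)$ to identify $\Lambda_n = a_0(n)$. Your computation of $x^nI\cdot\delta$ is slightly more explicit than the paper's, but the argument is the same.
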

\begin{proof}
Suppose that $\delta\in D(w)$.  Then for all integers $i\geq 0$, there exists $\lambda_i\in M_N(\bbc)$ such that $p(x,i)\cdot\delta = \lambda_ip(x,i)$.  Furthermore, since the $p(x,i)$ form a basis for $M_N(\bbc[x])$, given a $q(x)\in M_N(\bbc[x])$ of degree $n$, we may write $q(x) = \sum_{i=0}^n c_i p(x,i)$ for some $c_0,\dots c_n\in M_N(\bbc)$.  Then
$$q(x)\cdot\delta = \sum_{i=0}^n c_i \lambda_i p(x,i),$$
which has degree at most $n$.  Since $q(x)$ was arbitrary, this shows that $\delta$ is degree-filtration preserving.  Therefore by the previous proposition, $\delta = \sum_{i=0}^r\partial^i a_i(s)$ for some $a_i(s)\in M_N(\bbc[s])$.

We know that $p(x,n) = Ix^n + \text{(lower degree terms)}$, and therefore
$$p(x,n)\cdot\delta = \sum_{i=0}^r p(x,n)\cdot \partial^ia_i(s) = a_0(n)x^n + \text{(lower degree terms)}.$$
Therefore since $p(x,n)\cdot\delta = \lambda_np(x,n) = \lambda_nx^n + \text{(lower degree terms)}$, we have that $\lambda_n = a_0(n)$ for all integers $n\geq 0$.
\end{proof}

The previous proposition in particular shows that if $p(x,n)$ is a sequence of monic orthogonal matrix polynomials for a weight matrix $w$ and if $\delta\in D(w)$, then the sequence $\{\lambda_n\}_{n=0}^\infty\subseteq M_N(\bbc)$ satisfying $\lambda_np(x,n) = p(x,n)\cdot\delta$ will be a polynomial in $n$.  Thus we may define a map $\Lambda: D(w)\rightarrow M_N(\bbc[n])$ satisfying the property
$$\Lambda(\delta)(n)p(x,n) = p(x,n)\cdot\delta.$$
We denote the image of the map $\Lambda$ in $M_N(\bbc[n])$ by $E(w)$.
\begin{defn}\label{E(w) definition}
We call the subalgebra $E(w)$ the \vocab{algebra of eigenvalue sequences associated to the weight matrix $w$}.  We call the map $\Lambda$ the \vocab{eigenvalue isomorphism}.
\end{defn}
The eigenvalue homomorphism defines an njection of $D(w)$ into $M_N(\bbc[n])$.  This was shown in \cite{grunbaum2007}, but is reproved here in our notation.
\begin{prop}
The map $\Lambda$ is injective, and in particular defines an isomorphism of $D(w)$ onto $E(w)$.
\end{prop}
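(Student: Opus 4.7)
The plan is to reduce injectivity to the standard fact that a matrix differential operator annihilating every matrix polynomial must be zero. Suppose $\delta \in D(w)$ lies in $\ker(\Lambda)$, so $\Lambda(\delta)(n) = 0$ for all $n \in \bbz_{\geq 0}$, which by the defining property of $\Lambda$ means $p(x,n) \cdot \delta = 0$ for every $n$. Because $\{p(x,n)\}_{n \geq 0}$ is a basis of $M_N(\bbc[x])$ as a left $M_N(\bbc)$-module (each $p(x,n)$ has degree exactly $n$ with nonsingular leading coefficient, so the argument from the previous proposition applies), every matrix polynomial $q(x)$ decomposes as a finite sum $q(x) = \sum_{i=0}^r c_i p(x,i)$ with $c_i \in M_N(\bbc)$. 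Linearity in the left action gives $q(x) \cdot \delta = \sum_{i=0}^r c_i\,(p(x,i) \cdot \delta) = 0$ for every $q(x) \in M_N(\bbc[x])$.

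Next I would show that any $\delta \in \mweyl$ annihilating all of $M_N(\bbc[x])$ is zero. Write $\delta = \sum_{i=0}^n \partial^i a_i(x)$ with $a_i(x) \in M_N(\bbc[x])$ and induct on $k \geq 0$ by applying $\delta$ to the scalar matrix $\tfrac{1}{k!}x^k I$. Evaluating at $k=0$ forces $a_0(x)=0$. Given that $a_0,\dots,a_{k-1}$ already vanish, the identity
\[
0 \;=\; \tfrac{1}{k!}x^k I \cdot \delta \;=\; \sum_{i=0}^{k} \binom{k}{i}\tfrac{1}{k!}\cdot\frac{k!}{(k-i)!}\,x^{k-i} a_i(x)
\]
collapses, modulo lower-order contributions that already vanish, to $a_k(x)=0$. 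Hence $\delta = 0$, establishing that $\Lambda$ is injective.

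Since $E(w)$ is defined as the image of $\Lambda$ inside $M_N(\bbc[n])$, the corestriction $\Lambda : D(w) \to E(w)$ is automatically surjective, hence a bijection. Compatibility of $\Lambda$ with addition and scalar multiplication is immediate, and compatibility with products follows from the associativity of the right action: if $\Lambda(\delta_j)(n) = A_j(n)$ for $j=1,2$, then $p(x,n)\cdot(\delta_1\delta_2) = (p(x,n)\cdot\delta_1)\cdot\delta_2 = A_1(n)\,p(x,n)\cdot\delta_2 = A_1(n)A_2(n)\,p(x,n)$, so $\Lambda(\delta_1\delta_2) = \Lambda(\delta_1)\Lambda(\delta_2)$. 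Thus $\Lambda$ is an algebra isomorphism of $D(w)$ onto $E(w)$.

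There is no real obstacle here; the only point to handle carefully is ensuring the inductive extraction of the coefficients $a_k(x)$ from the vanishing of $\tfrac{1}{k!}x^k I \cdot \delta$. This is where the specific fact that the test polynomials are \emph{scalar} matrices (so they commute through the coefficients $a_i(x)$) is essential — on the right, the chain rule contributions only involve powers of $x$, and the leading surviving term is precisely $a_k(x)$.
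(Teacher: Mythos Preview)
Your proposal is correct and follows essentially the same strategy as the paper: reduce to showing that an operator in $D(w)$ annihilating every $p(x,n)$ (hence every matrix polynomial, since the $p(x,n)$ form a left $M_N(\bbc)$-basis) must be the zero operator. The paper simply asserts this last step, whereas you spell it out via induction on the monomials $x^kI$; you also verify the multiplicativity of $\Lambda$, which the paper leaves implicit. One small slip: in your displayed formula the factor $\binom{k}{i}$ should not be there, since the $i$-th derivative of $\tfrac{1}{k!}x^k$ is $\tfrac{1}{(k-i)!}x^{k-i}$, but this does not affect the inductive extraction of $a_k(x)$.
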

\begin{proof}
Suppose that $\Lambda(\delta) = \Lambda(\delta') = \lambda(n)\in M_N(\bbc[n])$ for some $\delta,\delta'\in D(w)$.  Then this in particular implies that for all $n$,
$$p(x,n)\cdot(\delta-\delta') = (\lambda(n)-\lambda(n))p(x,n) = 0.$$
It follows that the kernel of $\delta-\delta'$ contains $M_N(\bbc)\{p_0,p_1,\dots\} = M_N(\bbc[x])$.  Therefore $\delta-\delta'$ must be identically zero.
\end{proof}

\subsection{Kernels of Differential Operators}
In this section we touch on some necessary results regarding the kernel of a matrix differential operator of order $r$.  By default throughout the paper, unless stated otherwise, the kernel of a differential operator always refers to its kernel as a linear operator on the ring of power series $M_N(\bbc[[x]])$.  
\begin{defn}
Let $\mu\in \mweyll$ be a matrix differential operator.  We define the \vocab{kernel of $\mu$} to be the subspace $\ker(\mu)\subseteq M_N(\bbc[[x]])$ defined by
$$\ker(\mu) = \{\psi\in M_N(\bbc[[x]]): \psi\cdot\mu = 0I\}.$$
\end{defn}
For this reason, the elements of the kernel may not in fact belong to the Hilbert space determined by the weight matrix $w$.  Furthermore, in this section the term matrix differential operator will always refer to an element in $\mweyll$.
\begin{lem}\label{first kernel lemma}
Suppose that $\mu$ is a monic matrix differential operator of order $r$.  Then $\ker(\mu)$ is a locally free as a left $M_N(\bbc)$-module, generated by some $\psi_1,\dots,\psi_r\in M_N(\bbc[[x]])$ with $\psi_i$ a unit in $M_N(\bbc[[x]])$ for all $i$.
\end{lem}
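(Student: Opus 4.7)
The plan is to reduce the lemma to formal existence and uniqueness for a monic linear ODE, and then to adjust the resulting basis by hand to enforce the unit condition on the generators. Since $\mu$ is monic of order $r$, I would write $\mu = \partial^r + \sum_{i=0}^{r-1}\partial^i a_i(x)$ with $a_i \in M_N(\bbc[[x]])$. The equation $\psi\cdot\mu = 0$ is then equivalent to
$$\psi^{(r)}(x) \;=\; -\sum_{i=0}^{r-1}\psi^{(i)}(x)\,a_i(x),$$
and comparing coefficients of $x^n/n!$ on both sides expresses $\psi^{(n+r)}(0)$ as a right $M_N(\bbc)$-linear combination of $\psi(0), \psi'(0), \dots, \psi^{(n+r-1)}(0)$. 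Hence each tuple $(\psi(0),\dots,\psi^{(r-1)}(0)) \in M_N(\bbc)^r$ determines a unique element $\psi \in M_N(\bbc[[x]])$ lying in $\ker(\mu)$.

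Next I would observe that left multiplication by a constant $c \in M_N(\bbc)$ commutes with $\partial$, and hence with the right action of $\mu$; this is precisely what endows $\ker(\mu)$ with a left $M_N(\bbc)$-module structure, and it implies that the initial-data map
$$\Phi:\ker(\mu)\longrightarrow M_N(\bbc)^r,\qquad \psi\mapsto(\psi(0),\psi'(0),\dots,\psi^{(r-1)}(0))$$
is an isomorphism of left $M_N(\bbc)$-modules. Consequently $\ker(\mu)$ is free of rank $r$, with a distinguished basis $\phi_1,\dots,\phi_r$ characterised by $\phi_i^{(k-1)}(0) = \delta_{ik}\,I$.

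Only $\phi_1$ is a unit in $M_N(\bbc[[x]])$; the others vanish at $x=0$ and therefore fail the unit criterion. I would repair this by replacing the basis with
$$\psi_1 := \phi_1,\qquad \psi_i := \phi_1 + \phi_i \quad (2 \le i \le r).$$
Every $\psi_i$ then has constant term $I$, so each $\psi_i$ is a unit in $M_N(\bbc[[x]])$. The change of basis from $\{\phi_i\}$ to $\{\psi_i\}$ is upper triangular and unimodular over $M_N(\bbc)$, so $\{\psi_1,\dots,\psi_r\}$ is still a free basis of the left $M_N(\bbc)$-module $\ker(\mu)$, which gives the required generating set. The one point that deserves care is the formal recursion: one must verify that the monicity of $\mu$ really does let us solve for the top derivative and produce a well-defined element of $M_N(\bbc[[x]])$, and that $\Phi$ respects the left $M_N(\bbc)$-action; once those are in hand, the unit-adjustment at the end is purely mechanical.
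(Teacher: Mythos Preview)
Your proposal is correct and follows essentially the same approach as the paper's proof: both use formal existence and uniqueness for a monic ODE over $\bbc[[x]]$ to identify $\ker(\mu)$ with $M_N(\bbc)^r$ via initial data, and both pick the same basis---the paper linearises to a first-order system $Y'=YU$ and chooses initial data $(I,0,\dots,0),(I,I,0,\dots,0),\dots,(I,0,\dots,I)$ directly, which is exactly your $\psi_i=\phi_1+\phi_i$. The only cosmetic difference is that you run the recursion on the higher-order equation itself rather than on its companion system.
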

\begin{proof}
There exist $a_0(x),\dots, a_{r-1}(x)\in M_N(\bbc[[x]])$ such that $\mu = \partial^rI + \sum_{i=0}^{r-1} \partial^i a_i(x)$.  Then the matrix differential equation
$$y^{(r)}(x) + \sum_{i=0}^{r-1}y^{(i)}(x)a_i(x) = 0$$
may be linearized to 
$$Y' = YU,\ 
Y = \left(\begin{array}{cccc}
y & y' & \dots & y^{(r-1)}
\end{array}\right),\ 
U = \left(\begin{array}{ccccc}
0 & 0 & \dots & 0 & -a_0(x)\\
I & 0 & \dots & 0 & -a_1(x)\\
0 & I & \dots & 0 & -a_2(x)\\
\vdots & \vdots & \ddots & \vdots & \vdots\\
0 & 0 & \dots & I & -a_{r-1}(x)
\end{array}\right)$$
for $Y\in \bbc[[x]]^{N\times rN}$, $U\in \bbc[[x]]^{rN\times rN})$:
Let $U_0,U_1,\dots\in M_{rN}(\bbc)$ with $U = \sum_{i=0}^\infty U_ix^i$.  Then $Y=\sum_{i=0}^\infty c_i x^i$ is a solution, for all $c_i\in \bbc^{N\times rN}$ satisfying the recursion relation
$$c_{j+1} = \sum_{i=0}^j\frac{1}{j+1}c_iU_{j-i}.$$
This establishes a $M_N(\bbc)$-linear map between the space of solutions $Y$ to $Y'=YU$ and the space of constant matrices $c_0\in \bbc^{N\times rN}$, which is a free $M_N(\bbc)$-module.  Under this isomorphism, the basis
$$(I,0,0,\dots, 0),\ \ (I,I,0,\dots, 0),\ \ (I,0,I,\dots, 0),\ \ (I,0,0,\dots, I),$$
corresponds to a $M_N(\bbc)$-module basis $\psi_0,\dots,\psi_r$ of $\ker(\mu)$.  By choice, each of the $\psi_i$ satisfies $\psi_i(0) = I$, and therefore $\psi_i$ is a unit in $M_N(\bbc[[x]])$ for all $i$.
\end{proof}

\begin{remk}
A point of caution is warrented here.  It is important in the above theorem that the coefficients of the operator reside in the power series ring $\bbc[[x]]$.  For example, the monic differential operator $\partial - \frac{1}{2x}$ has no kernel in $\bbc[[x]]$, or even in the ring of Laurent series $\bbc((x))$.
\end{remk}

\begin{remk}
The assumption that $\mu$ is monic is also important.  For example, if we consider the differential operator
$$\mu = \mxx{0}{1}{0}{0}\partial,$$
the kernel of $\mu$ is $\binom{\bbc[[x]]}{\bbc}$, which is not a finite-dimensional vector space.
\end{remk}

\begin{lem}
Suppose that $\mu$ is a monic matrix differential operator of order $r$, and that $\ker(\mu)$ has a $M_N(\bbc)$-module basis consisting of units Then $\mu$ has a factorization  of the form
$$\mu = (\partial I - \varphi_1)(\partial I - \varphi_2)\dots(\partial I - \varphi_r).$$
for some $\varphi_1,\dots,\varphi_r \in M_N(\bbc[[x]])$.
\end{lem}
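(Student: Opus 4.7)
The plan is to induct on the order $r$ of $\mu$, peeling off one factor $(\partial I - \varphi_1)$ from the left at each step by means of a unit kernel element and non-commutative left division inside $\mweyll$.

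For the base case $r = 1$, write $\mu = \partial I + a_0(x)$. A unit $\psi_1$ generating $\ker(\mu)$ satisfies $0 = \psi_1\cdot\mu = \psi_1' + \psi_1 a_0$, so $a_0 = -\psi_1^{-1}\psi_1'$, and setting $\varphi_1 := \psi_1^{-1}\psi_1'$ gives $\mu = \partial I - \varphi_1$ as required.

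For the inductive step, I would select a unit $\psi_1$ from the hypothesized basis of $\ker(\mu)$, define $\varphi_1 := \psi_1^{-1}\psi_1' \in M_N(\bbc[[x]])$, and note that by construction $\psi_1\cdot(\partial I - \varphi_1) = \psi_1' - \psi_1 \varphi_1 = 0$. The key technical step is then left division: I would argue that because $\partial I - \varphi_1$ is monic of order $1$, repeated subtraction of expressions of the form $(\partial I - \varphi_1)\cdot \partial^k b_k$ (using the commutation $\varphi_1 \partial^k = \partial^k \varphi_1 + \textnormal{lower order}$) strips one order at a time and terminates after $r$ steps, producing a unique expression $\mu = (\partial I - \varphi_1)\, Q + R$ with $R \in M_N(\bbc[[x]])$ and $Q$ monic of order exactly $r-1$ (the leading coefficient being forced to equal $I$ by comparing top-order terms). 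Applying this identity to $\psi_1$ on the left yields $0 = \psi_1\cdot\mu = (\psi_1\cdot(\partial I - \varphi_1))\cdot Q + \psi_1 R = \psi_1 R$, and since $\psi_1$ is a unit in $M_N(\bbc[[x]])$ this forces $R = 0$. Hence $\mu = (\partial I - \varphi_1)\,Q$.

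To close the induction, I would invoke Lemma~\ref{first kernel lemma} applied to $Q$: since $Q$ is monic of order $r-1$, its kernel is locally free over $M_N(\bbc)$ with a basis of units, so the inductive hypothesis gives $Q = (\partial I - \varphi_2)(\partial I - \varphi_3)\cdots(\partial I - \varphi_r)$, and substitution yields the claimed factorization. The only real obstacle is justifying the division algorithm in this non-commutative setting and verifying that the quotient comes out monic of the right order; both follow from the fact that the divisor has leading coefficient the identity matrix, which makes the standard Euclidean reduction go through without modification, even though coefficients lie in the noncommutative ring $M_N(\bbc[[x]])$.
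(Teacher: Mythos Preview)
Your proof is correct and follows essentially the same inductive strategy as the paper's: peel off a left factor $(\partial I - \psi^{-1}\psi')$ using a unit $\psi$ in the kernel, then apply induction (via Lemma~\ref{first kernel lemma}) to the monic quotient of order $r-1$. The only cosmetic difference is that the paper achieves the factorization by conjugating $\mu$ by $\psi$ (so that $\psi\mu\psi^{-1}$ kills $I$, hence has vanishing order-zero term and factors as $\partial\nu$), whereas you run an explicit Euclidean division; unwinding the conjugation gives exactly your quotient $Q = \psi^{-1}\nu\psi$.
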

\begin{proof}
We proceed by induction on $r$.  If $r=1$, then the statement is trivial.  Assume the result for all monic operators of order $r-1$, let $\mu$ be a monic operator of order $r$, and choose $\psi\in\ker(\mu)$ such that $\psi$ is a unit in $M_N(\bbc[[x]])$.  Writing
$$\psi\mu\psi^{-1} = \partial^rI + \sum_{i=0}^{r-1}\partial^i b_i(x)$$
for some $b_i(x)\in\bbc[[x]]$, the fact that $I\cdot(\psi\mu\psi^{-1}) = 0$ implies that $b_0(x) = 0$.  Therefore $\psi\mu\psi^{-1} = \partial \nu$ for some monic matrix differential operator $\nu$ of degree $r-1$.  Note that $\psi^{-1}\nu\psi$ is monic of order $r-1$, so by our inductive assumption, it has the desired factorization, and therefore
$$\mu = \psi^{-1}\partial\psi\psi^{-1}\nu\psi = (\partial I-\psi^{-1}\psi')\psi^{-1}\nu\psi$$
has the desired factorization also.
\end{proof}

The previous lemma allows us to provide necessary and sufficient conditions for a monic matrix differential operator to divide another matrix differential operator on the left.
\begin{lem}
Suppose that $\mu,\delta$ are matrix differential operators, with $\mu$ monic.  Then $\mu^{-1}\delta$ is a matrix differential operator if and only if $\ker(\mu)\subseteq\ker(\delta)$.
\end{lem}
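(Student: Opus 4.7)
The forward direction is essentially a one-line calculation: if $\mu^{-1}\delta = \eta$ is a matrix differential operator, then $\delta = \mu\eta$ as elements of $\mweyll$, so for every $\psi\in\ker(\mu)$ we have $\psi\cdot\delta = (\psi\cdot\mu)\cdot\eta = 0$. Hence $\ker(\mu)\subseteq\ker(\delta)$.

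For the converse, the plan is to exploit the fact that $\mu$ is monic of order $r$ to carry out Euclidean-type left division in $\mweyll$. Iteratively killing the leading term of $\delta$ by subtracting multiples $\partial^{n-r}a_n\,\mu$ (using $a(x)\partial^j = \partial^j a(x) + \text{lower order}$) produces a decomposition $\delta = \mu\eta + \rho$ with $\eta,\rho\in\mweyll$ and $\mathrm{ord}(\rho)<r$. Since $\mu^{-1}\delta = \eta + \mu^{-1}\rho$, what we must show is that under the hypothesis, $\rho=0$. Because $\ker(\mu)\subseteq\ker(\mu\eta)$ automatically, the assumption $\ker(\mu)\subseteq\ker(\delta)$ descends to $\ker(\mu)\subseteq\ker(\rho)$. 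Thus the converse reduces to the claim: any matrix differential operator $\rho\in\mweyll$ of order strictly less than $r$ whose kernel contains $\ker(\mu)$ must vanish.

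To prove this claim, I would invoke Lemma \ref{first kernel lemma} (and specifically the $M_N(\bbc)$-module isomorphism $\ker(\mu)\cong\bbc^{N\times rN}$ given by $\psi\mapsto(\psi(0),\psi'(0),\dots,\psi^{(r-1)}(0))$ appearing in its proof). Using this isomorphism I can select a basis $\psi_1,\dots,\psi_r$ of $\ker(\mu)$ with normalized initial data $\psi_j^{(i)}(0) = \delta_{ij}I$ for $0\leq i,j\leq r-1$. Writing $\rho=\sum_{i=0}^{r-1}\partial^i b_i(x)$ with $b_i\in M_N(\bbc[[x]])$, the equation $\psi_j\cdot\rho = 0$ at $x=0$ reads $\sum_i \psi_j^{(i)}(0)b_i(0) = b_j(0) = 0$. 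I then induct on $k$: taking the $x^k$-coefficient of $\psi_j\cdot\rho = 0$, the Leibniz expansion $(\psi_j^{(i)}b_i)^{(k)} = \sum_m\binom{k}{m}\psi_j^{(i+k-m)}b_i^{(m)}$ contributes $\psi_j^{(i+k-m)}(0)b_i^{(m)}(0)$; every $m<k$ summand vanishes by inductive hypothesis, and only the $m=k$ terms survive, collapsing via the normalized initial conditions to $b_j^{(k)}(0)=0$. Hence every Taylor coefficient of each $b_i$ vanishes, so $\rho=0$.

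The routine parts are the forward direction and the Euclidean division; the main obstacle is the inductive vanishing argument. The delicate point is that the tally of surviving Leibniz terms genuinely selects $b_j^{(k)}(0)$ alone, which requires the normalized basis of $\ker(\mu)$ furnished by Lemma \ref{first kernel lemma}. Without access to such a basis, one would instead have to invoke invertibility of a Wronskian-type matrix at $x=0$, which is essentially equivalent content but less transparent.
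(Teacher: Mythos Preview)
Your proof is correct and takes a genuinely different route from the paper's. The paper reduces to the case $\mathrm{ord}(\mu)=1$ via the factorization $\mu=(\partial I-\varphi_1)\cdots(\partial I-\varphi_r)$ established in the preceding lemma; in the first-order case the kernel is generated by a single unit $\psi$, one identifies $\mu=\partial I-\psi^{-1}\psi'$, conjugates $\delta$ by $\psi$ so that $I$ lies in the kernel of $\psi\delta\psi^{-1}$ (forcing its constant term to vanish), and reads off $\delta=\mu\cdot\psi^{-1}\eta\psi$. Your approach instead stays at order $r$, performs Euclidean division (available because $\mu$ is monic), and then shows directly that the remainder vanishes via the normalized initial-data basis of $\ker(\mu)$. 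Your argument is more self-contained in that it uses only Lemma~\ref{first kernel lemma} (really only the bijection $\psi\mapsto(\psi(0),\dots,\psi^{(r-1)}(0))$ appearing in its proof) and bypasses the factorization lemma entirely; the paper's argument is conceptually slicker but leans on more of the surrounding machinery.

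One small slip worth flagging: you phrase the Euclidean step as subtracting $\partial^{n-r}a_n\,\mu$, which would yield a decomposition $\delta=\eta\mu+\rho$ with $\mu$ on the right. For $\mu^{-1}\delta$ you need $\delta=\mu\eta+\rho$, so the correct subtractant is $\mu\cdot(\partial^{n-r}a_n)$. Since $\mu$ is monic this kills the top term just as well, and the remainder of your argument is unaffected.
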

\begin{proof}
From the previous lemma, it suffices to consider the case when $\mu$ is monic of order $1$.  In this case, $\ker(\mu)$ is generated as a left $M_n(\bbc)$-module by $\psi\in M_N(\bbc[[x]])$ with $\psi$ a unit in $M_N(\bbc[x])$.  Note that $\mu - (\partial I - \psi^{-1}\psi')$ is a differential operator of order $0$, and has kernel containing $\psi$.  Since $\psi$ is invertible in $M_N(\bbc[[x]])$ it follows that $\mu = \partial I - \psi^{-1}\psi'$.

Now since $\psi\in\ker(\delta)$, we have that $M_N(\bbc) I$ is in the kernel of $\psi\delta\psi^{-1}$.  It follows by the same argument as in the previous lemma that $\psi\delta\psi^{-1}=\partial \eta$ for some differential operator $\eta$.  Therefore
$$\delta = \psi^{-1}\partial\psi\psi^{-1}\eta\psi = (\partial - \psi^{-1}\psi')\psi^{-1}\eta\psi = \mu\psi^{-1}\eta\psi.$$
Therefore $\mu^{-1}\delta$ is a matrix differential operator.

Conversely, suppose $\mu := \nu^{-1}\delta$ is a matrix differential operator.  Then for any $\psi\in\ker(\nu)$ we have that
$$\psi\cdot\delta = \psi\cdot(\nu\mu) = (\psi\cdot\nu)\cdot\mu = 0I\cdot\mu = 0I.$$
Therefore $\psi\in\ker(\delta)$.  This completes the proof.
\end{proof}
We can loosen the restriction that $\mu$ is monic by instead assumin gthat there exists $\nu\in\mweyll$ such that $\nu\mu$ is monic.  This leads to the following generalization of the previous lemma.
\begin{lem}\label{kernel lemma 1}
Suppose that $\mu,\delta$ are matrix differential operators, and suppose that there exists a matrix differential operator $\nu$ such that $\nu\mu$ is monic.  Then $\mu^{-1}\delta$ is a matrix differential operator if and only if $\ker(\mu)\subseteq\ker(\delta)$.
\end{lem}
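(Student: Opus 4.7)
The plan is to reduce the general statement to the preceding lemma (where the left factor is monic) by premultiplying everything by $\nu$.

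First I would dispatch the easy direction. If $\mu^{-1}\delta = \eta \in \mweyll$, then $\delta = \mu\eta$, so for any $\psi \in \ker(\mu)$ we have $\psi\cdot\delta = (\psi\cdot\mu)\cdot\eta = 0I$, hence $\ker(\mu)\subseteq\ker(\delta)$.

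For the nontrivial direction, assume $\ker(\mu)\subseteq\ker(\delta)$. The idea is to apply the previous lemma to the pair $(\nu\mu,\nu\delta)$, in which $\nu\mu$ is monic by hypothesis. For this I need $\ker(\nu\mu)\subseteq\ker(\nu\delta)$. Take $\psi\in\ker(\nu\mu)$ and set $\phi := \psi\cdot\nu$; then $\phi\cdot\mu = \psi\cdot(\nu\mu) = 0I$, so $\phi\in\ker(\mu)\subseteq\ker(\delta)$, whence $\psi\cdot(\nu\delta) = \phi\cdot\delta = 0I$. The preceding lemma then gives a matrix differential operator $\eta\in\mweyll$ with $\nu\delta = (\nu\mu)\eta$, i.e.\ $\nu(\delta-\mu\eta)=0$.

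Finally I would argue that we can cancel $\nu$ on the left, which would then give $\delta = \mu\eta$ and thus $\mu^{-1}\delta = \eta \in \mweyll$. This is the step that requires the extra hypothesis: because $\nu\mu$ is monic, if $\nu$ has order $m$ with leading coefficient $b_m$ and $\mu$ has order $k$ with leading coefficient $a_k$, then $b_m a_k = I$, so both $b_m$ and $a_k$ are invertible in $M_N(\bbc[[x]])$. Consequently, for any nonzero $\alpha\in\mweyll$ of order $j$ with leading coefficient $c_j$, the composition $\nu\alpha$ has leading coefficient $b_m c_j \neq 0$, and so $\nu\alpha\neq 0$. Applying this to $\alpha = \delta-\mu\eta$ gives $\alpha = 0$, as desired.

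The main obstacle is the final cancellation step: $\mweyll$ is not a priori a domain, so one cannot cancel $\nu$ for free. The resolution exploits precisely the assumption that $\nu\mu$ is monic, which forces $\nu$ to have invertible leading coefficient and therefore act injectively on $\mweyll$ by left multiplication. Everything else is a mechanical reduction to the monic case already handled.
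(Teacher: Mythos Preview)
Your overall strategy matches the paper's: premultiply by $\nu$ to reduce to the monic case and invoke the preceding lemma. You even spell out the inclusion $\ker(\nu\mu)\subseteq\ker(\nu\delta)$, which the paper simply asserts. The difference, and the gap, is in the final cancellation step.

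You claim that if $\nu$ has order $m$ with leading coefficient $b_m$ and $\mu$ has order $k$ with leading coefficient $a_k$, then $\nu\mu$ monic forces $b_m a_k = I$. This fails in the matrix setting because the order of a product can drop. For $N=2$, take $\nu = \partial E_{11} + E_{22}$ and $\mu = \partial E_{22} + E_{11}$; then $\nu\mu = \partial I$ is monic of order $1$, yet $b_1 a_1 = E_{11}E_{22} = 0$. So monicity of $\nu\mu$ does not by itself pin down the leading coefficient of $\nu$, and your argument that $\nu$ acts injectively on $\mweyll$ by left multiplication is incomplete as written.

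The paper handles this step differently: having obtained from the preceding lemma that $(\nu\mu)^{-1}(\nu\delta)$ is a differential operator, it simply writes $(\nu\mu)^{-1}(\nu\delta) = \mu^{-1}\nu^{-1}\nu\delta = \mu^{-1}\delta$ in the pseudo-differential calculus. This tacitly uses that $\mu^{-1}$ exists as a pseudo-differential operator---an assumption already implicit in the phrase ``$\mu^{-1}\delta$ is a matrix differential operator''---from which $\nu^{-1} = \mu(\nu\mu)^{-1}$ exists as well. Under that reading your conclusion (left-cancellability of $\nu$) is in fact correct; it is only your route to it via $b_m a_k = I$ that does not go through.
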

\begin{proof}
Clearly if $\mu^{-1}\delta$ is a matrix differential operator, then $\ker(\mu)\subseteq\ker(\delta)$.  Thus it suffices to prove the converse.  Suppose that $\ker(\mu)\subseteq\ker(\delta)$.  Then $\ker(\nu\mu)\subseteq\ker(\nu\delta)$, and therefore by the previous lemma, $\mu^{-1}\delta = (\nu\mu)^{-1}(\nu\delta) = \mu^{-1}\delta$ is a matrix differential operator.
\end{proof}

Instead of left-dividing, we are sometimes interested in conjugation.  The next lemma provides necessary and sufficient conditions for the conjugate to be a differential operator.
\begin{lem}\label{kernel lemma 2}
Suppose that $\mu,\delta$ are matrix differential operators, and suppose furthermore that there exists a matrix differential operator $\nu$ such that $\nu\mu$ is monic.  Then $\mu^{-1}\delta\mu$ is a matrix differential operator if and only if $\ker(\mu)\cdot\delta\subseteq\ker(\mu)$.
\end{lem}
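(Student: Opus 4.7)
The plan is to reduce this lemma to the preceding Lemma \ref{kernel lemma 1} by viewing $\delta\mu$ as a single matrix differential operator and applying that lemma to the pair $(\mu,\delta\mu)$. The crux is the observation that $\mu^{-1}\delta\mu$ is a differential operator precisely when $\mu^{-1}(\delta\mu)$ is, so the hypothesis needed for Lemma \ref{kernel lemma 1} is exactly the hypothesis we have, namely the existence of $\nu$ with $\nu\mu$ monic.

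First I would invoke Lemma \ref{kernel lemma 1} to conclude that $\mu^{-1}(\delta\mu)$ is a matrix differential operator if and only if $\ker(\mu)\subseteq\ker(\delta\mu)$. Then I would unpack the latter containment: for any $\psi\in M_N(\bbc[[x]])$, the definition of the kernel together with associativity of the right action of $\mweyll$ on $M_N(\bbc[[x]])$ yields
$$\psi\in\ker(\delta\mu)\iff \psi\cdot(\delta\mu)=0I\iff (\psi\cdot\delta)\cdot\mu=0I\iff \psi\cdot\delta\in\ker(\mu).$$
Consequently $\ker(\mu)\subseteq\ker(\delta\mu)$ is equivalent to the assertion that $\psi\cdot\delta\in\ker(\mu)$ for every $\psi\in\ker(\mu)$, which is exactly the containment $\ker(\mu)\cdot\delta\subseteq\ker(\mu)$. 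Chaining these equivalences gives the lemma.

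There is essentially no serious obstacle here; the statement is a direct corollary of Lemma \ref{kernel lemma 1}, with the only content being the reformulation of the invariance condition $\ker(\mu)\cdot\delta\subseteq\ker(\mu)$ as the kernel inclusion $\ker(\mu)\subseteq\ker(\delta\mu)$. The one point to be careful about is to note that this equivalence uses only associativity of the right action, not any additional regularity of $\delta$ or $\mu$, so no further hypotheses on $\delta$ are needed beyond those already provided.
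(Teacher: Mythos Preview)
Your proof is correct and follows essentially the same approach as the paper: apply Lemma \ref{kernel lemma 1} to the pair $(\mu,\delta\mu)$ and then rewrite the condition $\ker(\mu)\subseteq\ker(\delta\mu)$ as $\ker(\mu)\cdot\delta\subseteq\ker(\mu)$ via associativity of the right action. The paper's proof is just a more compressed version of what you wrote.
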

\begin{proof}
From the previous lemma, $\mu^{-1}\delta\mu$ is a differential operator if and only if $\ker(\mu)\subseteq\ker(\delta\mu)$, which in turn holds if and only if $\ker(\mu)\cdot\delta\subseteq\ker(\mu)$.
\end{proof}

\begin{remk}
In the Terminology and Notation section above, we talk about $\mu^{-1}$, defined as a matrix pseudo-differential operator (if it exists).  By matrix pseudo-differential operator, we mean a pseudo-differential operator with coefficients in $M_N(\bbc((x)))$ for $\bbc((x))$ the field of Laurent series in $x$.  When does this pseudo-differential operator exist?  The answer is that $\mu^{-1}$ exists if and only if $\mu$ is not a zero-divisor in $\mweyll$.  In particular, this holds if the leading coefficient of $\mu$ has a determinant which is not identically zero.
\end{remk}

\begin{remk}
Suppose that we know $\nu^{-1}\delta\nu$ exists and is, in fact, some differential operator by the above lemmas.  How can we calculate its precise value?  One way is to note that $\wt\delta := \nu^{-1}\delta\nu$ must be a differential operator of order at most $\ell + 2m$ where $\ell$ is the order of $\delta$ and $m$ is the order of $\nu$.  The reason for this is that the order of $\nu^{-1}$ is bounded by the order of $\nu$.   Thus we can write $\wt \delta = \sum_{n=0}^{\ell+2m} \partial^n a_n(x)$ for some unknown functions $a_n(x)$ and then determine which values of the $a_n(x)$ work to satisfy the equation $\delta\nu = \nu\wt\delta$.  This removes the requirement of calculating $\nu^{-1}$ explicitly.  Note also that if $\nu$ has nonsingular leading coefficient, then the order of $\nu^{-1}\delta\nu$ is the same as the order of $\delta$, and therefore in such a case we may assume $a_n(x) = 0I$ for $n>\ell$.
\end{remk}

\subsection{The Proof}\label{The Proof Section}
Before proving the Main Theorem, we introduce one more lemma to help with some moment estimates.
\begin{lem}\label{trace lemma}
Suppose that $a,b\in M_N(\bbc)$ are positive-semidefinite.  Then
$$\tr(ab) \leq \tr(a)\tr(b).$$
\end{lem}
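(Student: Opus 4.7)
The plan is to reduce to the case where $a$ is diagonal by a unitary similarity, and then use the fact that the diagonal entries of a positive-semidefinite matrix are nonnegative.

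More precisely, since $a$ is positive-semidefinite, it is Hermitian and may be written as $a = U^*DU$ for some unitary matrix $U$ and some diagonal matrix $D = \tn{diag}(\lambda_1,\dots,\lambda_N)$ with $\lambda_i\geq 0$ for all $i$. Using the cyclic property of the trace, I would write
\[
\tr(ab) = \tr(U^*DUb) = \tr(D\cdot UbU^*) = \tr(Dc),
\]
where $c := UbU^*$. The matrix $c$ is again positive-semidefinite (as it is unitarily conjugate to $b$), so its diagonal entries $c_{11},\dots,c_{NN}$ are nonnegative, and furthermore $\tr(c) = \tr(b)$.

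The final step is to compare $\sum_i \lambda_i c_{ii}$ with $\tr(a)\tr(b) = \bigl(\sum_i \lambda_i\bigr)\bigl(\sum_j c_{jj}\bigr) = \sum_{i,j}\lambda_ic_{jj}$. Since every term $\lambda_ic_{jj}$ is nonnegative and the left-hand sum is obtained by restricting to the diagonal $i=j$, we obtain
\[
\tr(ab) = \sum_{i=1}^N \lambda_i c_{ii} \leq \sum_{i,j=1}^N \lambda_i c_{jj} = \tr(a)\tr(b),
\]
as desired. There is no real obstacle here; the only thing to be careful about is ensuring that the diagonal of $c$ is nonnegative, which follows from the standard fact that for positive-semidefinite $c$ and any standard basis vector $e_i$, one has $c_{ii} = e_i^*ce_i \geq 0$.
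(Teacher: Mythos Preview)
Your proof is correct. You diagonalize $a$ unitarily, reduce to $\tr(Dc)=\sum_i\lambda_i c_{ii}$ with $\lambda_i\geq 0$ and $c_{ii}\geq 0$, and then observe that this diagonal sum is dominated by the full double sum $\sum_{i,j}\lambda_i c_{jj}=\tr(a)\tr(b)$. Every step is justified.

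The paper argues differently: it invokes the Cauchy--Schwarz inequality for the Frobenius inner product,
\[
\tr(ab)\leq\bigl(\tr(a^2)\tr(b^2)\bigr)^{1/2},
\]
and then uses $\tr(a^2)\leq\tr(a)^2$ (valid for positive-semidefinite $a$ since $\sum\lambda_i^2\leq(\sum\lambda_i)^2$ when all $\lambda_i\geq 0$). Your argument is more self-contained and elementary, avoiding the appeal to Cauchy--Schwarz, at the cost of an explicit diagonalization; the paper's route is a two-line application of a standard inequality. Both are perfectly adequate for this lemma.
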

\begin{proof}
By Cauchy-Schwartz,
$$\tr(ab)\leq (\tr(a^2)\tr(b^2))^{1/2}.$$
Furthermore, since $a$ is positive-semidefinite,
$$\tr(a^2)\leq \tr(a)^2$$
and similarly for $b$.  From this the statement of the lemma follows immediately.
\end{proof}

We now have everything in place for the proof of the Main Theorem (Theorem (\ref{main theorem})).
\begin{proof}[Proof of Main Theorem]
\mbox{}
\begin{enumerate}[(a)]
\item  Note that since $\det(v(x))\in\bbc\diff\{0\}$, $v(x)^{-1}$ is also a polynomial.  Moreover since $w(x)$ is smooth and positive-definite on its support $(x_0,x_1)$, we may factor $w(x) = u(x)u(x)^*$ for some smooth function $u(x)$ of full rank on $(x_0,x_1)$.  Next, since $\delta$ is $w$-symmetric, the leading coefficient $a_2$ must be $w$-symmetric also.  Therefore $v^{-1}a_2(v^{-1})^\dag$ is also $w$-symmetric, and consequently $u^{-1}v^{-1}a_2(v^{-1})^\dag u$ is smooth and Hermitian on $(x_0,x_1)$.  Thus we may factor it as
$$u^{-1}v^{-1}a_2(v^{-1})^\dag u = hh^*$$
for some smooth function $h$ on $(x_0,x_1)$.  Taking $f = uhu^{-1}$, we have that
$$ff^\dag = uhu^{-1}w(uhu^{-1})^*w^{-1} = uhh^*u^{-1} = v^{-1}a_2 (v^{-1})^\dag$$
and therefore $f$ satisfies the properties stated in (a)

\item  From the definition of $f$, we have that $|\det(f)|^2 = |\det(h)|^2 = |\det(v)|^{-2}\det(a_2)\neq 0$ on $(x_0,x_1)$.  Therefore $f(x)$ has full rank for all $x\in (x_0,x_1)$, and it follows that $\wt w(x) = f(x)w(x)f(x)^*$ is positive-definite for all $x\in (x_0,x_1)$.  Moreover, $\wt w(x)$ is smooth on $(x_0,x_1)$, since it is the product of three smooth matrix-valued functions.  Therefore to prove that $\wt w(x)$ is a weight matrix, all that is left to show is that $\wt w(x)$ has finite moments.  Equivalently, we must show
$$\int_{x_0}^{x_1} |x|^n \tr(\wt w) dx < \infty.$$
To show this, first note that
$$\tr(\wt w) = \tr(fuu^*f^*) = \tr(uhh^*u^*) = \tr(hh^*u^*u).$$
By Lemma (\ref{trace lemma})
\begin{align*}
\tr(hh^*u^*u)
  & \leq \tr(hh^*)\tr(u^*u) = \tr(u^{-1}v^{-1}a_2(v^{-1})^\dag u)\tr(w)\\
  & = \tr((v^{-1})^{\dag}a_2v^{-1})\tr(w)
\end{align*}
and therefore
$$\int_{x_0}^{x_1} |x|^n \tr(\wt w) dx < \int_{x_0}^{x_1} |x|^n \tr(v^{-1}(x)a_2(x)(v^{-1}(x))^\dag)\tr(w(x))dx < \infty$$
by H\"{o}lder's inequality and the fact that $w(x)$ has finite moments.  Therefore $\wt w$ is a weight matrix.

\item
We calculate
\begin{align*}
\mu
  & = -v_1^{-1}a_2(v_1^{-1})^\dag\nu^\dag\\
  & = -v_1^{-1}a_2(\partial -v_0v_1^{-1})^\dag\\
  & = -v_1^{-1}a_2(-\partial - w'w^{-1} - (v_0v_1^{-1})^\dag)\\
  & = \partial v_1^{-1}a_2 + v_1^{-1}a_2w'w^{-1} + v_1^{-1}a_2(v_0v_1^{-1})^\dag + (v_1^{-1}a_2)'
\end{align*}

Furthermore, the noncommutative Pearson equation tells us that
$$(v_1^{-1}a_2)' = (v_1^{-1})'a_2 + v_1^{-1}\frac{1}{2}(a_1+a_1^\dag) - v_1^{-1}a_2w'w^{-1}.$$
Substituting this in and simplifying, we find
$$\mu = \partial v_1^{-1}a_2 - v_0^{-1}a_0 + v_0^{-1}[(v_0v_1^{-1}a_2(v_0v_1^{-1})^\dag - (v_0v_1^{-1})^2a_2]$$
Then using the fact that $(v_0v_1^{-1})^\dag = v_0v_1^{-1}$ the last summand cancels out, leaving
$$\mu = \partial v_1^{-1}a_2 - v_0^{-1}a_0$$
Using this, we calculate
\begin{align*}
\nu\mu
  & = (\partial v_1 - v_0)(\partial v_1^{-1}a_2 - v_0^{-1}a_0)\\
  & = \partial^2 a_2 + \partial [(v_1)'v_1^{-1}a_2-v_0v_1^{-1}a_2 - v_1v_0^{-1}a_0] + a_0\\
  & = \partial^2 a_2 + \partial [-(v_0v_1^{-1})^{-1}(v_0v_1^{-1})'a_2-v_0v_1^{-1}a_2 - v_1v_0^{-1}a_0] + a_0\\
  & = \partial^2 a_2 + \partial -(v_0v_1^{-1})^{-1}[(v_0v_1^{-1})'a_2+(v_0v_1^{-1})^2a_2 + a_0] + a_0\\
  & = \partial^2 a_2 + \partial a_1 + a_0 = \delta.
\end{align*}

Next note that $\nu$ is degree-filtration preserving.  If $\nu$ is not degree preserving, then there exists a nonzero polynomial $p(x)$ satisfying $p(x)\cdot\nu = 0$.  Then
$$\deg(p) = \deg(p\cdot\delta) = \deg(p\cdot\nu\cdot\nu^{-1}\delta) = \deg(0) = -\infty.$$
This is a contradiction, and therefore $\nu$ is degree-preserving.  Since $\delta = \nu\mu$ and $\nu$ are degree-preserving, it follows that $\mu$ is also degree-preserving.

\item The assumption that $\lim_{x\rightarrow x_i} (|x|^n+1)w(x) = 0$ implies that $\nu f$ is $w$-adjointable by Lemma (\ref{first order adjoint}).  Moreover, we calculate:
\begin{align*}
\langle \wt p(x,m), \wt p(x,n)\rangle_{\wt w}
  & = \langle p(x,m)\cdot\nu f, p(x,n) \cdot \nu f\rangle_w = \langle p(x,m),p(x,n)\cdot \nu ff^\dag\nu^\dag\rangle_w\\
  & = \langle p(x,m),p(x,n)\cdot(\nu\mu)\rangle_w = \langle p(x,m),p(x,n)\cdot\delta\rangle_w
\end{align*}
Since $\delta$ is degree-filtration preserving, if $m>n$ then $p(x,n)\cdot(\delta + \nu c)$ may be written as a $M_N(\bbc)$-linear combination of $p(x,0),\dots p(x,n)$, and therefore $\langle p(x,m),p(x,n)\cdot(\delta + \nu c)\rangle_w = 0$.  Consequently $\langle \wt p(x,m), \wt p(x,n)\rangle_{\wt w} = 0I$ for $m>n$.  If $m<n$, then
$$\langle \wt p(x,m),\wt p(x,n)\rangle_{\wt w} = \langle \wt p(x,n),\wt p(x,m)\rangle_{\wt w}^* = (0I)^* = 0I.$$
Therefore in any event
$$\langle \wt p(x,m),\wt p(x,n)\rangle_{\wt w} = 0I,\ \ m\neq n.$$
Moreover $\wt p(x,m)$ is a polynomial of degree $m$ for each $m$, since $\nu$ is degree-preserving.  It follows that $\wt p(x,m)$ is a sequence of orthogonal matrix polynomials for $\wt w$.

\item
We know that $p(x,n)\cdot\delta = \lambda_n p(x,n)$ for some $\lambda_n\in M_N(\bbc)$ for all $n\geq 0$.  We calculate
$$\wt p(x,n)\cdot \wt\delta = p(x,n)\cdot (\nu\mu\nu) = p(x,n)\cdot (\delta\nu) = \lambda_n p(x,n)\cdot\nu = \lambda_n\wt p(x,n).$$
Since $\wt p(x,n)$ is a sequence of orthogonal matrix polynomials for $\wt w$, this shows that $\wt\delta\in D(\wt w)$.  To complete the proof, we must show that $\wt\delta$ is $\wt w$-symmetric.  Since $\wt\delta\in D(\wt w)$, we know that $\wt\delta$ is $\wt w$-adjointable, so it suffices to prove that $\wt\delta$ is formally $\wt w$-symmetric, ie. that
$$\wt w (\wt\delta)^* (\wt w)^{-1} = \wt \delta.$$
We calculate
\begin{align*}
\wt w (\wt\delta)^* (\wt w)^{-1}
  & = fwf^*(ff^\dag\nu^\dag\nu)^* (f^*)^{-1}w^{-1}f^{-1}\\
  & = fwf^*\nu^*(\nu^\dag)^*(f^\dag)^*f^*(f^*)^{-1}w^{-1}f^{-1}\\
  & = fwf^*\nu^*(\nu^\dag)^*(f^\dag)^*w^{-1}f^{-1} = fw(\nu f)^*(\nu f)^{\dag *}w^{-1}f^{-1}\\
  & = fw((\nu f)^\dag(\nu f))^*w^{-1}f^{-1} = f((\nu f)^\dag(\nu f))^\dag f^{-1}\\
  & = f(\nu f)^\dag(\nu f) f^{-1} = ff^\dag\nu^\dag\nu = \wt\delta.
\end{align*}
This proves (f).

\item
Suppose that $\wt\delta'\in (\nu^{-1} D(w)\nu)\cap M_N(\Omega)$.  Then there exists $\delta'\in D(w)$ satisfying $\delta'\nu = \nu\wt\delta'$.  Moreover, there exists a sequence $\lambda_0',\lambda_1',\dots\in M_N(\bbc)$ satisfying $p(x,n)\cdot\delta' = \lambda_n p(x,n)$ for all $n\geq 0$.  Therefore we have that
$$\wt p(x,n)\cdot\wt\delta' = p(x,n)\cdot\nu\wt\delta' = p(x,n)\cdot(\delta'\nu) = \lambda_n' p(x,n)\cdot\nu = \lambda_n'\wt p(x,n).$$
This shows that $\delta'\in D(\wt w)$, proving that $(\nu^{-1} D(w)\nu)\cap M_N(\Omega)\subseteq D(\wt w)$.  The fact that $(\nu^{-1} D(w)\nu)\cap M_N(\Omega) = \{\nu^{-1}\delta\nu: \delta\in D(w),\ \ker(\nu)\cdot\delta\subseteq\ker(\nu)\}$ is just a restatement of Lemma (\ref{kernel lemma 2}).
\end{enumerate}
\end{proof}

We next prove Proposition \ref{subalgebra proposition}.  Before doing so, we establish the following lemma.
\begin{lem}
Let $\nu$ be a Darboux transformation from a Bochner pair $(w,\delta)$ to a Bochner pair $(\wt w,\wt\delta)$.  Then $D(\wt w)\wt\delta \subseteq D(\wt w,\nu,w)$.
\end{lem}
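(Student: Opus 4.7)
The plan is to exhibit, for any $\wt\eta\in D(\wt w)$, an explicit $\eta\in D(w)$ satisfying $\ker(\nu)\cdot\eta\subseteq\ker(\nu)$ together with the identity $\wt\eta\wt\delta=\nu^{-1}\eta\nu$. The natural candidate, suggested by the factorizations $\delta=\nu\mu$ and $\wt\delta=\mu\nu$, is
\[
\eta \;:=\; \nu\,\wt\eta\,\mu.
\]
With this choice, the identity $\wt\eta\wt\delta = \wt\eta\mu\nu = \nu^{-1}(\nu\wt\eta\mu)\nu = \nu^{-1}\eta\nu$ is formal and immediate, so all the content lies in showing that $\eta$ itself belongs to $D(w)$ and that $\ker(\nu)$ is $\eta$-stable. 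Note that $\nu^{-1}$ exists as a matrix pseudo-differential operator because $\nu$ is degree-preserving (Darboux transformations send monic OMP to OMP, forcing the leading coefficient of $\nu$ to be nonsingular), so the equality $\wt\eta\wt\delta=\nu^{-1}\eta\nu$ makes sense and automatically lies in $M_N(\Omega)$.

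Next I would verify $\eta\in D(w)$ by propagating eigenvalues along the chain $p(x,n)\mapsto\wt p(x,n)$ induced by $\nu$. Let $\{\lambda_n\}$ and $\{\wt\lambda_n\}$ be the eigenvalue sequences of $\delta$ on $\{p(x,n)\}$ and of $\wt\eta$ on $\{\wt p(x,n)\}$ respectively. Using $\wt p(x,n)=p(x,n)\cdot\nu$ and $\wt p(x,n)\cdot\mu=p(x,n)\cdot(\nu\mu)=p(x,n)\cdot\delta=\lambda_n\,p(x,n)$, one computes
\[
p(x,n)\cdot\eta \;=\; p(x,n)\cdot\nu\wt\eta\mu \;=\; \wt p(x,n)\cdot\wt\eta\mu \;=\; \wt\lambda_n\,\wt p(x,n)\cdot\mu \;=\; \wt\lambda_n\lambda_n\,p(x,n),
\]
so each $p(x,n)$ is an eigenfunction of $\eta$ with eigenvalue $\wt\lambda_n\lambda_n$. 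Since $\eta$ is manifestly a matrix differential operator (a product of three such), we conclude $\eta\in D(w)$.

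Finally, the kernel condition is essentially free: for any $\psi\in\ker(\nu)$ we have $\psi\cdot\eta=(\psi\cdot\nu)\cdot\wt\eta\mu=0$, so $\ker(\nu)\cdot\eta\subseteq\ker(\nu)$ trivially. Combining this with $\eta\in D(w)$ and using the second description of $D(\wt w,\nu,w)$ from Definition~\ref{Darboux transformation definition}, we obtain $\wt\eta\wt\delta=\nu^{-1}\eta\nu\in D(\wt w,\nu,w)$, as required. There is no real obstacle here; the only subtlety is keeping the right-action conventions straight in the eigenvalue propagation calculation, so that the factorizations $\delta=\nu\mu$, $\wt\delta=\mu\nu$ and $\wt p(x,n)=p(x,n)\cdot\nu$ compose in the correct order.
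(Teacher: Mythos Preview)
Your proof is correct and follows essentially the same approach as the paper: define $\eta=\nu\wt\eta\mu$, verify $p(x,n)\cdot\eta=\wt\lambda_n\lambda_n\,p(x,n)$ via the chain $p(x,n)\to\wt p(x,n)\to\wt\lambda_n\wt p(x,n)\to\wt\lambda_n\lambda_n p(x,n)$, and conclude $\wt\eta\wt\delta=\nu^{-1}\eta\nu\in D(\wt w,\nu,w)$. The only differences are cosmetic: you explicitly check the kernel condition $\ker(\nu)\cdot\eta=0$ (which the paper leaves implicit, using instead the first description $(\nu^{-1}D(w)\nu)\cap M_N(\Omega)$), and you add a remark on the existence of $\nu^{-1}$ as a pseudo-differential operator.
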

\begin{proof}
Suppose that $\wt\eta\in D(\wt w)$.  Then there exists $\mu\in\mweyll$ satisfying $\delta = \nu\mu$ and $\wt\delta = \mu\nu$ and therefore
$$\eta := \nu(\wt\eta\wt\delta)\nu^{-1} = \nu\wt\eta\mu\in\mweyll.$$
Let $\{p(x,n)\}$ be a sequence of orthogonal matrix polynomials for $w$.  Then there exists a sequence $\lambda_0,\lambda_1,\dots\in M_N(\bbc)$ satisfying $p(x,n)\cdot\delta = \lambda_n p(x,n)$ for all $n\geq 0$.  Also $\wt p(x,n) := p(x,n)\cdot\nu$ defines a sequence of orthogonal matrix polynomials for $\wt w$, and therefore there exists a sequence $\wt\lambda_0,\wt\lambda_1,\dots\in M_N(\bbc)$ satisfying $\wt \lambda_n\wt p(x,n) = \wt p(x,n)\cdot\wt\eta$.  We calculate
\begin{align*}
p(x,n)\cdot\eta
  & = p(x,n)\cdot(\nu\wt\eta\mu) = \wt p(x,n)\cdot(\wt \eta \mu)\\
  & = \wt\lambda_n \wt p(x,n)\cdot \mu = \wt\lambda_n p(x,n)\cdot(\nu\mu)\\
  & = \wt\lambda_n p(x,n)\cdot\delta = \wt\lambda_n\lambda_n p(x,n).
\end{align*}
Therefore $\eta\in D(w)$.  Furthermore $\nu^{-1}\eta\nu = \wt\eta\wt\delta$, and it follows that $\wt\eta\wt\delta\in D(\wt w,\nu, w)$.  Since $\wt\eta\in D(\wt w)$ was arbitrary, this proves our Lemma.
\end{proof}

\begin{proof}[Proof of Proposition \ref{subalgebra proposition}]
Suppose that $\wt \delta$ and $\ell$ satisfy the assumptions of the statement of the proposition.  Then for all $\wt \eta\in\mweyl$, the order of $\wt \eta\wt\delta$ is the order of $\wt \eta$ plus two.

By the previous lemma, $D(\wt w)\wt\delta \subseteq D(\wt w,\nu,w)$.  Therefore multiplication by $\wt \delta$ defines a $\bbc$-linear monomorphism $D(\wt w)_i\rightarrow D(\wt w,\nu,w)_{i+2}$.  This in turn restricts to a monomorphism
$$D(\wt w)_i/D(\wt w)_{i-1}\rightarrow D(\wt w,\nu,w)_{i+2}/D(\wt w,\nu,w)_{i+1}.$$
Furthermore, the inclusion $D(\wt w,\nu,w)\subseteq D(\wt w)$ induces an injection
$$D(\wt w,\nu,w)_i/D(\wt w,\nu,w)_{i-1}\rightarrow D(\wt w)_i/D(\wt w)_{i-1}$$
and therefore
$$\dim_\bbc\left(\frac{D(\wt w,\nu,w)_i}{D(\wt w,\nu,w)_{i-1}}\right)\leq \dim_\bbc\left( \frac{ D(\wt w)_i}{D(\wt w)_{i-1}}\right)\leq \dim_\bbc\left(\frac{D(\wt w,\nu,w)_{i+2}}{D(\wt w,\nu,w)_{i+1}}\right).$$
Thus for $i\geq \ell$ all of the above dimensions are equal, and therefore
$$D(\wt w,\nu,w)_i/D(\wt w,\nu,w)_{i-1} \xrightarrow{\cong} D(\wt w)_i/D(\wt w)_{i-1},\ \forall i\geq\ell.$$
Hence $D(\wt w)$ is generated over $D(\wt w,\nu,w)$ by elements of order $< \ell$.
\end{proof}

\section{Explicit Examples}\label{example section}
In this section we provide explicit examples of the Main Theorem in action.  The method of finding examples is straightforward: we consider a specific Bochner pair $(w,\delta)$ satisfying the assumptions of the Main Theorem, and then attempt to find a degree $1$ matrix polynomial $v_1(x)$ and a constant matrix $v_0$ satisfying the differential equation presented in the Main Theorem.  Once we find such a pair $v_0,v_1(x)$, all that is left is to check that it satisfies the remaining assumptions of the Main Theorem.  One helpful thing to point out in our search for $v_0,v_1(x)$ is that if $v_1(x)$ satisfies the assumptions of the Main Theorem, then $v_0v_1(x)^{-1}a_2(x)$ will also be a polynomial of degree one.  Therefore it makes sense to restrict our search to polynomials of the form
$$v_0v_1(x)^{-1} = (A_1x + A_0)a_2(x)^{-1}$$
such that $v_1(x)$ is also a polynomial of degree $1$.  Inserting this into the differential equation in the Main Theorem then results in a pair of algebraic relations that $A_0$ and $A_1$ must satisfy.  To illustrate this point further, we provide the following two examples.
\begin{ex}
Consider a Bochner pair of the form $(e^{-x^2},\delta)$ for $\delta = -\partial^2I + \partial 2xI + B$ for some Hermitian matrix $B\in M_N(\bbc)$.  Any such matrix $B$ will still give a Bochner pair, so we will leave it as flexible for now.  Suppose that $v_0v(x)^{-1} = A_1x+ A_0$ satisfies the differential equation of the Main Theorem.  Plugging this in and comparing coefficients of similar powers of $x$, we obtain the relations:
$$(A_1-2I)A_1 = 0I,\ \ A_1A_0 + A_0A_1 - 2A_0 = 0I,\ \ A_0^2 + A_1 = B.$$
Choosing $A_0,A_1,$ and $B$ so as to satisfy these equations, we obtain a potential candidate for $v(x)$.  Below we explore one family of solutions of these equations.
\end{ex}
\begin{ex}
Consider a bochner pair of the form $(x^be^{-x}1_{(0,\infty)},\delta)$ for $\delta = -\partial^2xI + \partial(x-(b+1))I + B$ for some Hermitian matrix $B\in M_N(\bbc)$.  Again, any such $B$ will give a Bochner pair, so we may use $B$ as an additional variable.  Suppose that $v_0v(x)^{-1} = (A_1x + A_0)(1-x^2)^{-1}x^{-1}$ satisfies the differential equation of the Main Theorem.  Plugging this in and comparing coefficients of similar powers of $x$, we obtain the relations:
$$A_1(A_1-I) = 0I,\ \ A_0(A_0 + bI) = 0I,\ \ B = A_1A_0 + A_0A_1 + bA_1 - A_0.$$
Choosing $A_0,A_1,$ and $B$ so as to satisfy these equations, we obtain a potential candidate for $v(x)$.  We do not explore this further in the examples.
\end{ex}
\begin{ex}
Consider a Bochner pair of the form $((1-x^2)^{r/2},\delta)$ for $\delta = -\partial^2(1-x^2)I + \partial x(r+2)I + B$ for some Hermitian matrix $B\in M_N(\bbc)$.  Again, any such matrix $B$ will give a Bochner pair, so we may use $B$ as an additional variable.  Suppose that $v_0v(x)^{-1} = (A_1x+ A_0)(1-x^2)^{-1}$ satisfies the differential equation of the Main Theorem.  Plugging this in and comparing coefficients of similar powers of $x$, we obtain the relations:
$$A_1^2 - rA_1 + A_0^2 = 0I,\ \ A_1A_0 + A_0A_1 - rA_0 = 0I,\ \ B = A_1 + A_0^2.$$
Choosing $A_0,A_1,$ and $B$ so as to satisfy these equations, we obtain a potential candidate for $v(x)$.  Below we explore one family of solutions of these equations.
\end{ex}
The relations in each of these examples are the central calculation which generated the examples provided below.  Many of the results in the example section below were double-checked using python code with the Sympy symbolic computation library\cite{sympymanual}.
\subsection{A Family of Examples of Hermite Type}\label{hermite example section}
As our first example, we consider Darboux transformations of Bochner pairs $(w,\delta)$ for
$$\delta = -\partial^2I + \partial2xI + B,\ \ w = e^{-x^2}I,\ \ B = \mxx{CC^* + 2I}{0I}{0I}{C^*C}.$$
for some nonsingular, normal $C\in M_{N/2}(\bbc)$ (we need $C$ to be normal so that $\delta$ is $w$-symmetric).  For this to work, we must assume $N$ is even.  According to the Main Theorem, we can find examples by finding an invertible, degree $1$ polynomial $v(x)$ satisfying a certain differential equation, along with a handful of other properties.  Define $v_1(x)$ and $v_0(x)$ by
$$v(x)^{-1} = \mxx{2I}{0I}{0I}{0I}x + \mxx{0I}{C}{C^*}{0I},\ \ v_0(x) = I.$$
Then $v_0(x),v_1(x)$ satisfy the requirements stated in Theorem 1.0.3. for the Bochner pair $(w,\delta)$.
\begin{prop}
The functions $v_0(x),v_1(x)$ as defined above satisfies the assumptions of the Main Theorem for the Bochner pair $(e^{-x^2}I,\delta)$ also defined above.
\end{prop}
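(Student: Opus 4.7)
The plan is to verify, one by one, each hypothesis of the Main Theorem applied to the Hermite-type data $(w,\delta)$ and the proposed pair $v_0,v_1$. Since $w(x)=e^{-x^2}I$ is a scalar weight, the formal $w$-adjoint $\dag$ coincides with the ordinary adjoint $*$ on $\mweyll$, which simplifies several checks. The operator $\delta$ has coefficients $a_0=B$, $a_1=2xI$, $a_2=-1$; then $a_1^\dag=a_1$, $a_2$ is scalar and nowhere vanishing, the Gaussian decay gives $\lim_{x\to\pm\infty}(|x|^n+1)w(x)=0$, and $(w,\delta)$ is a Bochner pair of classical Hermite type (with a $B$-shift). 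Degree-preservation of $\delta$ follows since $x^n I\cdot\delta=(2nI+B)x^n+(\text{lower order})$ and $B$ has positive spectrum (its eigenvalues are those of $CC^*+2I$ and $C^*C$), so $2nI+B$ is invertible for every $n\geq 0$.

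Next I would handle the polynomial conditions on $v_0,v_1$. Set $\phi(x):=v_0(x)v_1(x)^{-1}$, which by definition is the block matrix
\[\phi(x)=\begin{pmatrix} 2xI & C \\ C^* & 0 \end{pmatrix}.\]
Swapping the two block-rows (a permutation of sign $(-1)^{N/2}$) gives $\det\phi(x)=(-1)^{N/2}|\det C|^2$, a nonzero constant; so the cofactor formula shows $v_1(x)=\phi(x)^{-1}$ is itself polynomial, and an explicit block inversion yields
\[v_1(x)=\begin{pmatrix} 0 & (C^*)^{-1} \\ C^{-1} & -2x\,C^{-1}(C^*)^{-1}\end{pmatrix}.\]
Thus $v_1(x)$ has degree $1$ with constant nonzero determinant on $\bbr$, and $v_0=I$ has degree $0$. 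The Hermiticity $(v_0v_1^{-1})^\dag=v_0v_1^{-1}$ reduces, since the weight is scalar, to $\phi=\phi^*$, which is manifest from the block structure.

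The central step is the quadratic ODE
\[\phi^2 a_2+\phi' a_2+\phi a_1+a_0=-\phi^2-\phi'+2x\phi+B=0.\]
I would expand
\[\phi^2=\begin{pmatrix}4x^2 I+CC^* & 2xC \\ 2xC^* & C^*C\end{pmatrix},\qquad \phi'=\begin{pmatrix}2I & 0 \\ 0 & 0\end{pmatrix},\qquad 2x\phi=\begin{pmatrix}4x^2 I & 2xC \\ 2xC^* & 0\end{pmatrix},\]
and check the identity block by block: the off-diagonal blocks cancel in pairs, the $(1,1)$-block cancels against $CC^*+2I$ in $B$, and the $(2,2)$-block cancels against $C^*C$. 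This is the only step that uses the specific form of $B$; indeed the block decomposition of $B$ is precisely what is forced by this ODE, so this is the computational crux of the proposition, though routine once set up.

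Finally I would dispose of the $L^2$-integrability clauses. For $i=0$, $(v_0a_2v_0^\dag)^{-1}=-I$ has trace the constant $-N$, trivially in $L^2(Ne^{-x^2}dx)$. For $i=1$, since $a_2=-1$ and $\dag=*$,
\[\tr\bigl((v_1a_2v_1^\dag)^{-1}\bigr)=-\tr\bigl((v_1v_1^*)^{-1}\bigr)=-\tr(\phi^*\phi)=-\tr(\phi^2)=-\bigl(2Nx^2+2\tr(CC^*)\bigr),\]
a quadratic polynomial, which lies in $L^2$ against the Gaussian density. Together these checks verify every clause of the Main Theorem's hypotheses for the pair $(v_0,v_1)$.
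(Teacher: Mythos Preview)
Your proof is correct and follows the same direct-verification approach as the paper's own proof; you are in fact slightly more thorough, explicitly checking degree-preservation of $\delta$ and the $i=0$ integrability clause, and your computation $\tr(\phi^2)=2Nx^2+2\tr(CC^*)$ corrects what appears to be a typo in the paper. One caveat: the claim that $\dag$ coincides with $*$ on all of $\mweyll$ for scalar $w$ is overstated (for instance $\partial^\dag=-\partial+2x\neq\partial^*$), but since you only invoke it for the order-zero operators $a_1$, $\phi=v_0v_1^{-1}$, and $v_i$, where $f^\dag=wf^*w^{-1}=f^*$ does hold, the argument is unaffected.
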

\begin{proof}
We calculate that $(v_1(x)^{-2}-2Ixv_1^{-1}) = \mxx{CC^*}{0I}{0I}{C^*C}$, and therefore
$$v_1(x) = \mxx{CC^*}{0I}{0I}{C^*C}^{-1}(v_1(x)^{-1} - 2xI) = \mxx{0I}{0I}{0I}{-2(C^*C)^{-1}}x + \mxx{0I}{(C^*)^{-1}}{C^{-1}}{0I}.$$
In particular $v_1(x)$ is a polynomial of degree $1$ and $\det(v(x))\in\bbc\diff\{0\}$ (because $v_1(x)$ is a unit in $M_N(\bbc[x])$).  We also see that $v_1(x)^* = v_1(x)$ and that
$$\tr((v(x)v(x)^\dag)^{-1}) = \tr(v(x)^{-2}) = 2\ell x + 2\tr(CC^*)$$
is also a polynomial and therefore in $L^2(\tr w(x))$.  Furthermore, we calculate
$$v(x)^{-2}a_2(x) + (v(x)^{-1})'a_2(x) + v(x)^{-1}a_1(x) + a_0(x) = 0$$
for $a_0(x) = B$.  This completes the proof.
\end{proof}
Thus as a consequence of the main theorem $\nu = \partial v_1(x) - v_0(x)$ defines a Darboux transformation from the Bochner pair $(e^{-x^2}I,\delta)$ to the Bochner pair $(\wt w, \wt \delta)$, where
$$\wt w = e^{-x^2}\mxx{4x^2I + CC^*}{2xC}{2xC^*}{C^*C},\ \ \wt\delta = -\partial^2I + \partial\left(xI - \mxx{0I}{-4I(C^*)^{-1}}{0I}{0I}\right) + B.$$
The Main Theorem also tells us a sequence of orthogonal matrix polynomials for $\wt w$, and this allows us to determine a generating function for them.
\begin{prop}
The monic orthogonal matrix polynomials $\wt p(x,n)$ for the weight matrix $\wt w$ define above are given by the generating function formula
$$\sum_{n=0}^\infty \mxx{0I}{(C^*)^{-1}}{C^{-1}}{-2(C^*C)^{-1}n}\frac{t^n}{n!}\wt p(x,n) = e^{xt-t^2/4}\mxx{-I}{(C^*)^{-1}t}{C^{-1}t}{-2(C^*C)^{-1}xt-I},$$
\end{prop}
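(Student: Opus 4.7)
The plan is to transport the classical scalar Hermite generating function through the Darboux operator $\nu$ and then match the two sides in block form.

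First I would recall that, because $w = e^{-x^2}I$ is a scalar weight times the identity, the monic matrix OMPs for $w$ are $p(x,n) = p_n(x)\,I$, where $p_n$ denotes the scalar monic Hermite polynomial. These satisfy the classical generating-function identity
$$F(x,t) := \sum_{n=0}^\infty \frac{p_n(x)}{n!}\,t^n = e^{xt-t^2/4},$$
and in particular $\partial_x F = tF$, which is equivalent to the derivative identity $p_n'(x) = n\,p_{n-1}(x)$.

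Next I would apply $\nu = \partial v_1(x) - I$ from the right, using $v_0 = I$. Since each entry of $p(x,n) = p_n(x)I$ is a scalar multiple of a scalar polynomial,
$$p(x,n)\cdot\nu = p_n'(x)\,v_1(x) - p_n(x)\,I.$$
Multiplying by $t^n/n!$, summing over $n$, and pulling $v_1(x)$ out of the sum,
$$\sum_{n=0}^\infty \frac{t^n}{n!}\bigl(p(x,n)\cdot\nu\bigr) = v_1(x)\,\partial_x F(x,t) - F(x,t)\,I = \bigl(tv_1(x) - I\bigr)e^{xt-t^2/4}.$$
Written out in block form using the explicit expression for $v_1(x)$, the matrix $tv_1(x)-I$ is precisely the block matrix appearing on the right-hand side of the proposition.

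The remaining step is to identify the matrix $M_n := \bigl(\begin{smallmatrix} 0I & (C^*)^{-1}\\ C^{-1} & -2(C^*C)^{-1}n\end{smallmatrix}\bigr)$ appearing on the left-hand side with the factor that converts the non-monic sequence $p(x,n)\cdot\nu$ into the monic OMP $\wt p(x,n)$ for $\wt w$, i.e.\ the relation $M_n\,\wt p(x,n) = p(x,n)\cdot\nu$. This is an algebraic check using the block form of $v_1(x)$ together with the leading-coefficient computation from $p_n(x) = x^n + \text{(lower order)}$ and $p_n'(x)v_1(x) - p_n(x)I$. Once this identification is in place, the summed identity above is exactly the claimed formula.

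The main obstacle is this final bookkeeping step — the analytic content of the argument is entirely the classical Hermite generating function together with the observation that the right-action of $\partial$ translates to $\partial_x$ on $F$, which in turn is multiplication by $t$; everything else is the explicit block-matrix structure of $v_1(x)$.
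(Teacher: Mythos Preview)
Your proposal is correct and follows essentially the same route as the paper's proof: start from the scalar Hermite generating function $\sum_n h_n(x)t^n/n! = e^{xt-t^2/4}$, apply $\nu$ on the right (which, since $\partial_x F = tF$, amounts to multiplication by $tv_1(x)-I$), and then identify the matrix $M_n$ on the left as the leading coefficient $c_n$ of $h_n(x)I\cdot\nu$, so that $c_n\,\wt p(x,n) = h_n(x)I\cdot\nu$. The paper carries out exactly these steps in the same order, only stating the leading-coefficient identification first and the generating-function multiplication second.
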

\begin{proof}
The (monic) Hermite polynomials $h_n(x)I$ define a sequence of monic orthogonal polynomials for $e^{-x^2}I$, and satisfy the generating function formula
$$\sum_{n=0}^\infty \frac{t^n}{n!}h_n(x)I = \exp(xt - t^2/4)I.$$
By the Main Theorem, $h_n(x)I\cdot\nu$ defines a sequence of orthogonal matrix polynomials for the weight matrix $\wt w$.  Since the leading coefficient of $h_n(x)I$ is $I$, it's easy to see that the leading coefficient of $h_n(x)I\cdot\nu$ is
$$c_n = \mxx{0}{(C^*)^{-1}}{C^{-1}}{-2(C^*C)^{-1}n}.$$
Thus $\wt p(x,n) := c_n^{-1}h_nI\cdot\nu$ is the monic sequence of orthogonal matrix polynomials for $\wt w$.  Multiplying both sides of the generating function formula for the Hermite polynomials by $\nu$ and using this expression for $\wt p(x,n)$ then results in the generating function formula in the statement of the theorem.
\end{proof}

For $N=2$ and $C=c\in\bbc\diff\{0\}$, this weight matrix appears first in \cite{duran2004orthogonal} and later in \cite{castro2006}, where explicit generators and relations of the associated algebra $D(\wt w)$ are listed without proof.  This set of generators and relations is verified in \cite{tirao2011} in a $30$-page tour-de-force.  The center of $D(\wt w)$ is also determined explicitly, though misidentified as being an elliptic curve, rather than a singular cubic plane curve.  In the following, we demonstrate the utility of the main theorem by rederiving the structure of $D(\wt w)$ and the center of $D(\wt w)$ succinctly.  Better yet, we show that $D(\wt w)$ is naturally identified with a certain subalgebra of $2\times 2$ matrices over a polynomial ring, as shown in Equation (\ref{first example eqn}).  The Main Theorem combined with Proposition (\ref{subalgebra proposition}) gives us a means to calculate the structure of the algebra $D(\wt w)$ associated with the weight matrix $\wt w$.  We will use this to calculate the structure of the algebra $D(\wt w)$ in the case that $N=2$ and $C=c\in \bbc\diff\{0\}$.
\begin{prop}\label{hermite example presentation}
Suppose $N=2$ and $C = c\in \bbc\diff\{0\}$.  The algebra $D(\wt w)$ is given by
\begin{equation}\label{first example eqn}
D(\wt w) = \left\lbrace\nu^{-1}\left(\begin{array}{cc}
f_{11}(\epsilon) & f_{12}(\epsilon)\\
f_{21}(\epsilon) & f_{22}(\epsilon)
\end{array}\right)\nu: f_{ij}(z)\in \bbc[z],\ \substack{f_{12}(|c|^2+2) = 0,\ f_{21}(|c|^2) = 0,\\ f_{11}(|c|^2+2) = f_{22}(|c|^2)}\right\rbrace.
\end{equation}
for $\epsilon = \partial^2-2\partial x$.
\end{prop}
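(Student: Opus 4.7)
The plan is to show $D(\wt w) = D(\wt w, \nu, w)$ by means of Proposition \ref{subalgebra proposition}, and then identify $D(\wt w, \nu, w)$ explicitly with the algebra on the right-hand side of \eqref{first example eqn}. Since $w = e^{-x^2}I$ has orthogonal matrix polynomials $h_n(x)I$, Theorem \ref{scalar algebra result} applied entrywise gives $D(w) = M_2(\bbc[\epsilon])$ with $\epsilon = \partial^2 - 2\partial x$. Hence $D(\wt w, \nu, w)$ consists exactly of operators $\nu^{-1}\eta\nu$ where $\eta = (f_{ij}(\epsilon)) \in M_2(\bbc[\epsilon])$ satisfies $\ker(\nu)\cdot\eta \subseteq \ker(\nu)$.

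To extract the three conditions on the $f_{ij}$, I first compute $\ker(\nu)$. Since $\det v_1(x) = -1/|c|^2$ is a nonzero constant, the operator $\nu = \partial v_1 - I$ has a nowhere-degenerate leading coefficient, and $\ker(\nu) = M_2(\bbc)\cdot \psi_0$ where $\psi_0 \in M_2(\bbc[[x]])$ is the unique solution of $\psi_0' = \psi_0 v_1^{-1}$ with $\psi_0(0) = I$. Applying the third bullet of the Main Theorem with $v_0 = I$, $a_2 = -I$, $a_1 = 2xI$, $a_0 = B$ yields
\[
(v_1^{-1})^2 + (v_1^{-1})' - 2xv_1^{-1} = B = \mxx{|c|^2+2}{0}{0}{|c|^2},
\]
and a direct computation using $\psi_0' = \psi_0 v_1^{-1}$ gives $\psi_0 \cdot \epsilon = \psi_0'' - 2x\psi_0' = \psi_0 B$. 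Thus the two columns of $\psi_0$ are eigenfunctions of $\epsilon$ with eigenvalues $\lambda_1 = |c|^2+2$ and $\lambda_2 = |c|^2$, so $\psi_0 \cdot \eta = \psi_0 M$ where $M_{jl} = f_{jl}(\lambda_j)$. Because $\psi_0$ is invertible near $0$, the containment $\ker(\nu)\cdot\eta \subseteq \ker(\nu)$ is equivalent to $\psi_0 M \psi_0^{-1}$ being constant in $x$, and the identity $(\psi_0 M \psi_0^{-1})' = \psi_0[v_1^{-1},M]\psi_0^{-1}$ reduces this to $[v_1^{-1}(x),M] = 0$ for all $x$. Separating the constant and $x$-proportional parts of this commutator yields precisely $f_{12}(\lambda_1) = 0$, $f_{21}(\lambda_2) = 0$, and $f_{11}(\lambda_1) = f_{22}(\lambda_2)$, identifying $D(\wt w, \nu, w)$ with the right-hand side of \eqref{first example eqn}.

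For the final upgrade to $D(\wt w)$, I apply Proposition \ref{subalgebra proposition}: the leading coefficient of $\wt\delta$ is $-I$, and the description above gives $\dim_\bbc D(\wt w,\nu,w)_{2k} = 4(k+1) - 3 = 4k+1$ for $k \ge 1$, with $\dim D(\wt w,\nu,w)_0 = 1$ and no odd-order elements (since $\epsilon$ has order $2$ and $\nu$ has invertible leading coefficient). So the graded-dimension sequence $\dim D(\wt w,\nu,w)_i/D(\wt w,\nu,w)_{i-1}$ is $1,0,4,0,4,0,\dots$, and the hypothesis of the proposition holds with $\ell = 1$; consequently $D(\wt w)$ is generated over $D(\wt w, \nu, w)$ by elements of order zero. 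The main obstacle is then to pin down $D(\wt w)_0$: if $C \in D(\wt w)_0 \subseteq M_2(\bbc)$, the Gr\"unbaum-Tirao theorem gives a $\wt w$-adjoint $\wt w C^* \wt w^{-1} = w_0 C^* w_0^{-1}$ (where $\wt w = e^{-x^2}w_0$) which must again be a constant matrix $D$, so $w_0(x)C^* = D\, w_0(x)$ identically in $x$. Comparing the $x^2$ and $x^1$ coefficients in $w_0$ forces $C^*$ to commute with both $\mxx{1}{0}{0}{0}$ (so $C^*$ is diagonal) and $\mxx{0}{c}{c^*}{0}$ (so its diagonal entries coincide). Hence $C \in \bbc I \subseteq D(\wt w,\nu,w)$, and therefore $D(\wt w) = D(\wt w, \nu, w)$ has the presentation in \eqref{first example eqn}.
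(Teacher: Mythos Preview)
Your proof is correct and follows the same overall architecture as the paper: compute $D(\wt w,\nu,w)$ by analyzing how $M_2(\bbc[\epsilon])$ acts on $\ker(\nu)$, read off the graded dimensions, and invoke Proposition~\ref{subalgebra proposition} to conclude $D(\wt w)=D(\wt w,\nu,w)$.

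There are two small but genuine differences worth noting. First, the paper builds a generator of $\ker(\nu)$ out of scalar eigenfunctions $\psi_1,\psi_2$ of $\epsilon$ (with eigenvalue $|c|^2$) and their derivatives, and then appeals to the linear independence of $\psi_1,\psi_2,\psi_1',\psi_2'$ to extract the three constraints on the $f_{ij}$. You instead take the ODE solution $\psi_0$ with $\psi_0'=\psi_0 v_1^{-1}$, observe directly from the Main Theorem's differential equation that $\psi_0\cdot\epsilon=\psi_0 B$, and reduce the invariance condition to the clean commutator identity $[v_1^{-1}(x),M]=0$; splitting this by powers of $x$ gives the same three relations without any appeal to linear independence. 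This is a tidier route and makes the source of the conditions transparent. Second, the paper simply asserts that the only order-zero elements of $D(\wt w)$ are scalars, whereas you actually supply the argument via the $\wt w$-adjoint: this fills a small gap the paper leaves open.
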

\begin{proof}
Note that $D(w) = M_N(\bbc[\epsilon])$ for $\epsilon = \partial^2-2\partial x$ the Hermite operator.  Thus 
$$D(\wt w) \supseteq \{\nu^{-1}f(\epsilon)\nu: f(\epsilon)\in M_N(\bbc[\epsilon]),\ \ker(\nu)\cdot f(\epsilon)\subseteq \ker(\nu)\}.$$
If $\psi_1$ and $\psi_2$ form a basis for the eigenfunctions of $\epsilon$ with eigenvalue $|b|^2$, then $\psi_1'$ and $\psi_2'$ form a basis for the eigenfunctions of $\epsilon$ with eigenvalue $|b|^2+2$, and
$$\psi = \left(\begin{array}{cc}
\psi_1' & b\psi_1\\
\psi_2' & b\psi_2
\end{array}\right)$$
is a generator for the kernel of $\nu$.  Therefore
$$D(\wt w)\supseteq D(\wt w,\nu,w) = \{\nu^{-1} f(\epsilon)\nu: f(\epsilon)\in M_N(\bbc[\epsilon]),\ \psi\cdot f(\epsilon)\in M_N(\bbc)\psi\}.$$
Now consider $f(\epsilon) = (f_{ij}(\epsilon))\in M_2(\bbc[\epsilon])$.  An elementary argument using the fact that $\psi_1,\psi_2,\psi_1',\psi_2'$ must all be linearly independent, shows us that the only way that $\psi$ can be an eigenfunction of $f(\epsilon)$ is if $f_{12}(|b|^2+2) = 0$, $f_{21}(|b|^2) = 0$, and $f_{11}(|b|^2+2)=f_{22}(|b|^2)$.  Therefore
$$
D(\wt w,\nu,w) = \left\lbrace\nu^{-1}\left(\begin{array}{cc}
f_{11}(\epsilon) & f_{12}(\epsilon)\\
f_{21}(\epsilon) & f_{22}(\epsilon)
\end{array}\right)\nu: f_{ij}(z)\in \bbc[z],\ \substack{f_{12}(|b|^2+2) = 0,\ f_{21}(|b|^2) = 0,\\ f_{11}(|b|^2+2) = f_{22}(|b|^2)}\right\rbrace.$$

To finish our proof, we must show that the inclusion $D(\wt w,\nu,w)\subseteq D(\wt w)$ is in fact an equality.  Since the leading coefficient of $\nu$ is nonsingular, if $\eta\in\mweyll$ is such that $\nu^{-1}\eta\nu$ is a differential operator, then the order of $\nu^{-1}\eta\nu$ is the same as the order of $\eta$.  Furthermore the order of $\mxx{f_{11}(\epsilon)}{f_{12}(\epsilon)}{f_{21}(\epsilon)}{f_{22}(\epsilon)}$ is equal to $2\max_{ij}\deg(f_{ij})$.  Looking at our conditions for the $f_{ij}$'s, we see that
$$D(\wt w,\nu, w) = \bbc I \oplus \nu^{-1}\mxx{p(\epsilon-2)\bbc}{p(\epsilon-2)\bbc}{p(\epsilon)\bbc}{p(\epsilon)\bbc}\nu \oplus \nu^{-1}M_2(\bbc[\epsilon]p(\epsilon)p(\epsilon-2))\nu,$$
for $p(\epsilon) = \epsilon-|b|^2$.
This shows in particular that $D(\wt w,\nu,w)$ has a one-dimensional subspace of operators of order $0$, no operators of odd order, and a four-dimensional subspace of operators of order $2m$ for every integer $m>0$.  In other words
$$\dim(D(\wt w,\nu, w)_i/D(\wt w,\nu, w)_{i-1}) = \left\lbrace\begin{array}{cc}
4, & \text{if $i>0$ is even}\\
0, & \text{if $i>0$ is odd}
\end{array}\right.$$
Therefore by Proposition \ref{subalgebra proposition}, $D(\wt w)$ is generated over $D(\wt w,\nu,w)$ by operators of order $0$.  However, the only operators of order $0$ in $D(\wt w)$ are those in $\bbc I$, and therefore $D(\wt w) = D(\wt w,\nu,w)$.
\end{proof}

The proof of the previous proposition shows us that $D(\wt w)$ is given by
$$D(\wt w) = \bbc I \oplus \nu^{-1}\mxx{p(\epsilon-2)\bbc}{p(\epsilon-2)\bbc}{p(\epsilon)\bbc}{p(\epsilon)\bbc}\nu \oplus \nu^{-1}M_2(\bbc[\epsilon]p(\epsilon)p(\epsilon-2))\nu,$$
A quick check shows that this algebra is generated by the four elements of order two.  Moreover, since the center of $M_2(\bbc[\epsilon]p(\epsilon)p(\epsilon-2))$ is $\bbc[\epsilon]p(\epsilon)p(\epsilon-2)I$, we have that the center of $D(\wt w)$ is
$$Z(\wt w) = \bbc I + \nu^{-1}\bbc[\epsilon]p(\epsilon)p(\epsilon-2)I\nu.$$
This is a commutative algebra whose spectrum is isomorphic to the nodal cubic curve $y^2 - 2(|b|^2+1)xy + |b|^2(|b|^2+2)x^2 - x^3 = 0$, via the isomorphism induced by the ring isomorphism defined by
$$\bbc[x,y]\rightarrow \bbc I + \nu^{-1}\bbc[\epsilon]p(\epsilon)p(\epsilon-2)I\nu,\ \ \ x\mapsto \nu^{-1} p(\epsilon)p(\epsilon-2)I\nu,\ y\mapsto \nu^{-1}\epsilon p(\epsilon)p(\epsilon-2)I\nu.$$
The ring $D(\wt w)$ has the structure of a module over its center, and as a module it is finitely generated and torsion-free.  However, it is not a free module over its center since
$$\mxx{0}{0}{p(\epsilon)}{0}(p(\epsilon)p(\epsilon-2)^2I) + \mxx{0}{0}{p(\epsilon)p(\epsilon-2)}{0}(-p(\epsilon)p(\epsilon-2)I) = 0I.$$
This completes our example.

\subsection{A Family of Examples of Jacobi Type}\label{jacobi example section}
Next, we consider Darboux transformations of Bochner pairs $(w,\delta)$ for
$$\delta = -\partial^2(1-x^2)I + \partial(r+2)xI + B,\ \ w = (1-x^2)^{r/2}1_{(-1,1)}I$$
where here
$$B = \mxx{S^2(I+T^2)}{0I}{0I}{T^2(I+S^2)}.$$
for some nonsingular, Hermitian $S,T\in M_{N/2}(\bbc)$ satisfying $S^2+T^2 = rI$, and for $r>2$.  Note that for this to work, we must again assume $N$ is even.  According to the Main Theorem, we can find examples by finding an invertible, degree $1$ polynomial $v(x)$ satisfying a certain differential equation, along with a handful of other properties.  Define $v_1(x)$ and $v_0(x)$ by
$$v_1(x)^{-1} = \left(\mxx{S}{0I}{0I}{T}x + \mxx{0I}{-S}{-T}{0I}\right)(1-x^2)^{-1},\ \ v_0(x) = \mxx{S}{0}{0}{T}.$$
Then $v_1(x),v_0(x)$ satisfy the requirements stated in Theorem 1.0.3. for the Bochner pair $(w,\delta)$
\begin{prop}
The functions $v_0(x),v_1(x)$ as defined above satisfy the assumptions of the Main Theorem for the Bochner pair $(w,\delta)$ also defined above.
\end{prop}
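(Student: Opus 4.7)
The plan is to verify in turn the three groups of hypotheses of the Main Theorem: the degree, determinant and $w$-symmetry conditions on $v_0$ and $v_1$; the integrability conditions; and the Riccati identity. Throughout, $w = (1-x^2)^{r/2} I$ is a scalar multiple of the identity, so the formal $w$-adjoint $\dag$ restricts on matrix-valued functions to ordinary Hermitian conjugation.

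I would first produce $v_1(x)$ in closed form by inverting $v_1(x)^{-1}$. Factoring $v_1(x)^{-1} = (1-x^2)^{-1}\mxx{S}{0}{0}{T}\mxx{xI}{-I}{-I}{xI}$ and using the block identity $\mxx{xI}{-I}{-I}{xI}^{-1} = (x^2-1)^{-1}\mxx{xI}{I}{I}{xI}$ yields $v_1(x) = -\mxx{xS^{-1}}{T^{-1}}{S^{-1}}{xT^{-1}}$, a matrix polynomial of degree one whose determinant is a nonzero scalar multiple of $(1-x^2)^{N/2}$, hence nonvanishing on $(-1,1)$; similarly $\det v_0 = \det(S)\det(T)\neq 0$. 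The $w$-symmetry condition $(v_0 v_1^{-1})^\dag = v_0 v_1^{-1}$ reduces, since $w$ is scalar, to the Hermitian symmetry of $v_0 v_1^{-1} = (1-x^2)^{-1}\mxx{S^2 x}{-S^2}{-T^2}{T^2 x}$, which follows from the Hermiticity of $S$ and $T$.

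For the integrability condition I would compute $(v_i a_2 v_i^\dag)^{-1}$ for $i = 0,1$. A short calculation, together with $S^2 + T^2 = rI$, shows that $\tr((v_0 a_2 v_0^\dag)^{-1})$ and $\tr((v_1 a_2 v_1^\dag)^{-1})$ are rational functions of $x$ with poles only at $x = \pm 1$; pairing the squares of these traces against $\tr(w) = N(1-x^2)^{r/2}$ reduces the $L^2$ condition to an elementary integrability check at the endpoints that holds under the stated hypothesis on $r$.

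The central step is the Riccati identity $(v_0 v_1^{-1})^2 a_2 + (v_0 v_1^{-1})' a_2 + (v_0 v_1^{-1}) a_1 + a_0 = 0$ with $a_2 = -(1-x^2)$, $a_1 = (r+2)x I$, and $a_0 = B$. Multiplying through by $(1-x^2)$ and equating coefficients block by block, each of the four blocks becomes a polynomial in $x$ whose matrix coefficients are polynomial expressions in $S^2$, $T^2$ and $rI$; the single identity $S^2 + T^2 = rI$ makes every such coefficient vanish. For instance, the $(1,1)$-coefficient of $x^2$ collapses to $S^2(rI - S^2 - T^2) = 0$, and the off-diagonal $(1,2)$-coefficient of $x$ collapses to $S^2(S^2 + T^2 - rI) = 0$; the other blocks are analogous. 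I expect the main obstacle here to be the bookkeeping: since $S$ and $T$ need not commute, products like $S^2 T^2$ and $T^2 S^2$ must be kept distinct throughout the expansion, but once that is done the single constraint $S^2 + T^2 = rI$ is all that is needed to force every block to vanish.
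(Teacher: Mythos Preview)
Your proposal is correct and follows essentially the same route as the paper: compute $v_1(x)$ explicitly, read off the degree and determinant conditions, reduce $(v_0v_1^{-1})^\dag = v_0v_1^{-1}$ to ordinary Hermiticity using that $w$ is scalar, check the $L^2$ conditions via the pole order at $x=\pm 1$ against $r>2$, and verify the Riccati identity block by block. One small remark: your caution about keeping $S^2T^2$ and $T^2S^2$ distinct is unnecessary here, since $S^2+T^2 = rI$ forces $[S^2,T^2]=[S^2,rI-S^2]=0$, so the only matrices that actually appear in the computation commute automatically.
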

\begin{proof}
We calculate
$$v_1(x) = \mxx{-S^{-1}}{0I}{0I}{-T^{-1}}x + \mxx{0I}{-T^{-1}}{-S^{-1}}{0I}.$$
In particular $v_i(x)$ is a polynomial of degree $i$ for $i=0,1$ and $\det(v_i)$ is nonzero on $(-1,1)$.  We also see that $(v_i(x)^{-1})^\dag = (v_i(x)^{-1})^*$ and that
$$\tr((v_1(x)a_2(x)v_1(x)^\dag)^{-1}) = \tr(v_1(x)^{-1}a_2(x)(v_1^{-1})^*) = \tr(SS^* + TT^*)\frac{1+x^2}{1-x^2}$$
which is in $\in L^2(\tr(w(x))dx)$ since $r>2$.  Moreover, $v_0v_1^{-1}$ is Hermitian and therefore $(v_0v_1^{-1})^\dag = (v_0v_1^{-1})^* = v_0v_1^{-1}$.
Lastly, we calculate
$$v(x)^{-2}a_2(x) + (v(x)^{-1})'a_2(x) + v(x)^{-1}a_1(x) + a_0(x) = 0$$
for $a_0(x) = B$.  This completes the proof.
\end{proof}

Thus as a consequence of the main theorem $\nu = \partial v_1(x) - v_0(x)$ defines a Darboux transformation from the Bochner pair $(w,\delta)$ to the Bochner pair $(\wt w, \wt \delta)$, where
$$\wt w = (1-x^2)^{r/2-1}\mxx{S^2x^2 + T^2}{-(S^2+T^2)x}{-(S^2+T^2)x}{T^2x^2 + S^2}$$
The Main Theorem also tells us a sequence of orthogonal matrix polynomials for $\wt w$, and this allows us to determine a generating function for them, consistent with the generating function found in \cite{duran2005structural} for $N=2$.
\begin{prop}
A sequence of (non-monic) orthogonal matrix polynomials $\wt p(x,n)$ for the weight matrix $\wt w$ define above are given by the generating function formula
$$\sum_{n=0}^\infty \wt p_n(x)t^n = \psi_x(x,t)v_1(x) + \psi(x,t)v_0(x),$$
where $\psi(x,t) = 2^{r-1}\phi(x,t)^{-1}((1+\phi(x,t))^2-t^2)^{(1-r)/2}$ and $\phi(x,t) = (1-2xt + t^2)^{1/2}$.
\end{prop}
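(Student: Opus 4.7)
The proof rests on the conclusion of the Main Theorem (part (d)), which guarantees that $\wt p(x,n) := p(x,n)\cdot\nu$ is a sequence of orthogonal matrix polynomials for $\wt w$, where $p(x,n)$ denotes the monic OMP sequence for $w = (1-x^2)^{r/2}I$. Since $w$ is a scalar weight times the identity, its monic OMPs reduce to (suitably normalized) scalar Jacobi polynomials $P_n^{(r/2,r/2)}(x)$ multiplied by $I$. Applying $\nu = \partial v_1(x) - v_0(x)$ to a generating function $G(x,t)$ for the scalars acts termwise by $G(x,t)\cdot\nu = G_x(x,t)\,v_1(x) - G(x,t)\,v_0(x)$, so the proof reduces to identifying $\psi$ as a classical generating function and tracing the sign/normalization conventions.

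First I would recognize $\psi(x,t)$ via the factorization $(1+\phi)^2-t^2 = (1-t+\phi)(1+t+\phi)$, rewriting $\psi = 2^{r-1}\phi^{-1}(1-t+\phi)^{-(r-1)/2}(1+t+\phi)^{-(r-1)/2}$. This matches the standard Jacobi generating function $\sum_{n\geq 0}P_n^{(\alpha,\beta)}(x)\,t^n = 2^{\alpha+\beta}R^{-1}(1-t+R)^{-\alpha}(1+t+R)^{-\beta}$ with $R=\phi$ and $\alpha=\beta=(r-1)/2$, so $\psi(x,t) = \sum_n P_n^{((r-1)/2,(r-1)/2)}(x)\,t^n$.

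The central bookkeeping step is to reconcile the shifted Jacobi parameter $(r-1)/2$ appearing in $\psi$ with the parameter $r/2$ governing the OMPs of $w$. Here I would invoke the classical derivative identity $\tfrac{d}{dx}P_n^{(\alpha,\alpha)}(x) = \tfrac{n+2\alpha+1}{2}P_{n-1}^{(\alpha+1,\alpha+1)}(x)$ together with the contiguous three-term relation expressing $P_n^{((r-1)/2,(r-1)/2)}$ as a linear combination of $P_n^{(r/2,r/2)}$ and $P_{n-1}^{(r/2,r/2)}$. Substituting into the $t^n$-coefficient of $\psi_x v_1 + \psi v_0$ and using the Möbius-like algebraic structure of $v_0(x), v_1(x)$ (whose product $v_0 v_1^{-1}$ is a rational function of a very specific Darboux-type form) should telescope the resulting expression to produce exactly $p(x,n)\cdot\nu$ up to invertible scalar left-factors.

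The main obstacle is precisely this telescoping: the derivative and contiguous identities must align with $v_0, v_1$ in a way that respects the non-commutative matrix structure while absorbing all Jacobi normalization constants into the left-multiplying scalars (which is harmless since the proposition allows a non-monic OMP sequence). Once this algebraic identity is established, orthogonality of the $\wt p_n$ is automatic from Main Theorem (d), and the proof is complete.
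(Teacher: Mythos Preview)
Your approach diverges substantially from the paper's, and the detour you take is both unnecessary (from the paper's point of view) and left incomplete.

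The paper's argument is essentially one line: it asserts that $\psi(x,t)$ is a generating function for the Gegenbauer polynomials $h_n(x)$ that are themselves (a non-monic rescaling of) the OMPs for the scalar weight $w=(1-x^2)^{r/2}I$, so that $\sum_n h_n(x)t^n\,I=\psi(x,t)I$. Acting on both sides by $\nu=\partial v_1(x)-v_0(x)$ immediately gives $\sum_n (h_n(x)I\cdot\nu)\,t^n=\psi_x v_1-\psi v_0$, and Main Theorem (d) certifies that $h_n(x)I\cdot\nu$ is an OMP sequence for $\wt w$. No parameter shift, no contiguous relations, no telescoping.

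You instead identify $\psi$ with the Jacobi generating function at parameters $\alpha=\beta=(r-1)/2$, observe a mismatch with the weight exponent $r/2$, and propose to repair it via the derivative relation $\tfrac{d}{dx}P_n^{(\alpha,\alpha)}=\tfrac{n+2\alpha+1}{2}P_{n-1}^{(\alpha+1,\alpha+1)}$ together with a contiguous three-term identity, hoping the matrix structure of $v_0,v_1$ makes everything ``telescope'' to $c_n\,p(x,n)\cdot\nu$. This is a genuinely different route, and your own proposal flags the telescoping as the ``main obstacle'' without carrying it out. That is a real gap: nothing in the setup guarantees that the two classical identities combine with the specific block form of $v_0,v_1$ to produce invertible \emph{left} scalar factors times $p(x,n)\cdot\nu$; until you actually perform that matrix computation, the orthogonality claim is unproved. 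By contrast, the paper sidesteps the issue entirely by taking $\psi$ to be a Gegenbauer generating function for the correct weight from the outset and invoking Main Theorem (d) directly.
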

\begin{proof}
The Gegenbauer polynomials $h_n(x)I$ define a sequence of monic orthogonal polynomials for $w(x)$, and satisfy the generating function formula
$$\sum_{n=0}^\infty h_n(x)t^nI = \psi(x,t)I.$$
By the Main Theorem, $h_n(x)I\cdot\nu$ defines a sequence of orthogonal matrix polynomials for the weight matrix $\wt w$.  Multiplying both sides of the generating function formula for the Gegenbauer polynomials by $\nu$ leads to the desired generating function formula.
\end{proof}

For $N=2$, $S^2=p$, and $T^2=r-p$, this weight matrix appears in \cite{zurrian2015}\cite{zurrian2016algebra}, where explicit generators and relations of the associated algebra $D(\wt w)$ are computed, though the effort involved is again substantial.  In this example, we verify the calculation of the algebra $D(\wt w)$ calculated in the paper, using the framework established above.  As with Tirao's example (eg. the example of Section \ref{hermite example section} with $N=2$), the machinery gives the algebra $D(\wt w)$ with significantly less wrangling.  In particular, the Main Theorem combined with Proposition (\ref{subalgebra proposition}) gives us a means to calculate the structure of the algebra $D(\wt w)$ associated with the weight matrix $\wt w$, which we do in this special case.
\begin{prop}\label{jacobi example presentation}
Suppose $N=2$ and $S^2=p$ and $T^2=r-p$ for $p\in (0,r)$.  The algebra $D(\wt w)$ is given by
$$D(\wt w) = D(\wt w,\nu,w)= \nu^{-1}\left\lbrace \left(\begin{array}{cc}
f_{11}(\epsilon) & f_{12}(\epsilon)\\
f_{21}(\epsilon) & f_{22}(\epsilon)
\end{array}\right) : \substack{f_{12}(p(n-p+1)) = 0,\ f_{21}((n-p)(p+1)) = 0,\\ f_{11}(p(n-p+1)) = f_{22}((n-p)(p+1))}\right\rbrace \nu.$$
for $\epsilon = -\partial^2(1-x^2) + \partial x(r+2)$.
\end{prop}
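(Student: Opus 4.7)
The plan is to mirror the proof of Proposition \ref{hermite example presentation} almost step for step, exploiting the fact that the underlying ``scalar'' weight in this Darboux construction is $(1-x^2)^{r/2}1_{(-1,1)}I$, so that by Miranian's theorem together with the fact that scalar-on-$I$ weights have $D(w) = M_N(D(\text{scalar weight}))$, we have $D(w) = M_2(\bbc[\epsilon])$ with $\epsilon = -\partial^2(1-x^2) + \partial x(r+2)$. By part (f) of the Main Theorem we get $D(\wt w) \supseteq D(\wt w,\nu,w) = \{\nu^{-1}\eta\nu : \eta\in M_2(\bbc[\epsilon]),\ \ker(\nu)\cdot\eta\subseteq\ker(\nu)\}$. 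Thus the first substantive step is to determine $\ker(\nu)$ explicitly: since $\nu = \partial v_1(x) - v_0(x)$ has invertible leading coefficient (as already verified when checking the Main Theorem's hypotheses), Lemma~\ref{first kernel lemma} tells us $\ker(\nu)$ is a free $M_2(\bbc)$-module of rank $2$.

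To pin down a generator, I would use the factorization $\delta = \nu\mu$: every $\psi\in\ker(\nu)$ lies in $\ker(\delta)$, and because $\delta = -\partial^2(1-x^2)I + \partial (r+2)xI + B$ decouples into two scalar Gegenbauer equations with eigenvalue shifts determined by the diagonal entries of $B$, I would look for $\psi$ with rows given by eigenfunctions of $\epsilon$ with eigenvalues $p(r-p+1)$ and $(r-p)(p+1)$ (the two diagonal entries of $B$ in this case). Let $\varphi_1,\varphi_2$ be a basis of the $\epsilon$-eigenspace of eigenvalue $p(r-p+1)$ and $\xi_1,\xi_2$ a basis for the eigenvalue $(r-p)(p+1)$; a direct computation using the explicit form of $v_0,v_1$ should show that the $2\times 2$ matrix whose rows are appropriate $\bbc$-linear combinations of these $\varphi_i,\xi_j$ (analogous to the Hermite matrix $\psi$ in Proposition~\ref{hermite example presentation}) generates $\ker(\nu)$.

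With a generator $\psi$ for $\ker(\nu)$ in hand, the condition $\ker(\nu)\cdot f(\epsilon) \subseteq \ker(\nu)$ for $f(\epsilon) = (f_{ij}(\epsilon))$ becomes a pointwise eigenvalue condition: applying $f(\epsilon)$ multiplies the $\varphi$-rows by $f_{ij}(p(r-p+1))$ and the $\xi$-rows by $f_{ij}((r-p)(p+1))$, and then demanding the result be a left $M_2(\bbc)$-multiple of $\psi$ forces exactly the interpolation conditions $f_{12}(p(r-p+1))=0$, $f_{21}((r-p)(p+1))=0$, $f_{11}(p(r-p+1))=f_{22}((r-p)(p+1))$ (which I take to be the content of the statement, with an apparent typo ``$n$'' in place of ``$r$''). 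This gives the explicit description of $D(\wt w,\nu,w)$.

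Finally, to promote this to the equality $D(\wt w) = D(\wt w,\nu,w)$ I would invoke Proposition~\ref{subalgebra proposition}. Decomposing $D(\wt w,\nu,w)$ as in the Hermite case into $\bbc I$ plus two summands whose dimensions in each order of the filtration are easy to read off from the interpolation conditions, we obtain $\dim(D(\wt w,\nu,w)_i/D(\wt w,\nu,w)_{i-1}) = 4$ for even $i>0$ and $0$ for odd $i$. Proposition~\ref{subalgebra proposition} then shows $D(\wt w)$ is generated over $D(\wt w,\nu,w)$ by operators of order $0$; since $\nu$ has invertible leading coefficient the only order-$0$ elements of $D(\wt w)$ are scalar multiples of $I$, forcing $D(\wt w) = D(\wt w,\nu,w)$. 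The main obstacle I anticipate is the bookkeeping in step two--producing the generator $\psi$ of $\ker(\nu)$ cleanly enough that the three interpolation conditions fall out transparently; once that is done the rest of the argument is a direct transcription of the Hermite proof.
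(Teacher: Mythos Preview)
Your proposal follows the paper's proof essentially step for step: identify $D(w)=M_2(\bbc[\epsilon])$, exhibit a generator $\psi$ of $\ker(\nu)$ whose entries are $\epsilon$-eigenfunctions at the two eigenvalues $p(r-p+1)$ and $(r-p)(p+1)$, read off the three interpolation conditions from $\psi\cdot f(\epsilon)\in M_2(\bbc)\psi$, and then use the dimension count together with Proposition~\ref{subalgebra proposition} to upgrade the inclusion to an equality. Two minor slips to fix in execution: $\ker(\nu)$ is a free left $M_2(\bbc)$-module of rank~$1$, not~$2$, since $\nu$ is first order (you clearly intend this anyway, speaking of ``a generator $\psi$''); and the claim that the only order-$0$ operators in $D(\wt w)$ are scalar multiples of $I$ is \emph{not} a consequence of $\nu$ having invertible leading coefficient---it is an irreducibility statement about $\wt w$ itself, which the paper simply asserts (and in this Jacobi case supplements with a by-hand check that $D(\wt w)$ contains no $\wt w$-symmetric operator of order~$1$).
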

\begin{proof}
Let $\epsilon = -\partial^2(1-x^2) + \partial x(r+2)$.  Then $D(w) = M_2(\bbc[\epsilon])$.  Let $\psi_1,\psi_2$ be eigenfunctions of $\epsilon$ with eigenvalues $p(r-p+1)$ and $(r-p)(p+1)$, respectively.  Then $\phi_2 = \frac{x^2-1}{n-p}\psi_1' + \frac{p}{r-p}x\psi_1$ is an eigenfunciton of $\epsilon$ with eigenvalue $(r-p)(p+1)$, and $\phi_1 = \frac{x^2-1}{p}\psi_2' + \frac{r-p}{p}x\psi_2$ is an eigenfunction of $\epsilon$ with eigenvalue $p(r-p+1)$.  Moreover the kernel of $\nu$ is
$$\ker(\nu) = M_2(\bbc)\psi(x),\ \ \psi(x) = \left(\begin{array}{cc}
\psi_1 & \phi_2\\
\phi_1 & \psi_2.
\end{array}\right).$$
Our criterion for $\nu^{-1}\eta\nu$ to be a differential operator then implies that
$$\nu D(\wt w, \nu, w)\nu^{-1} = \{f(\epsilon)\in M_2(\bbc[\epsilon]): \psi\cdot f(\epsilon)\in M_2(\bbc)\psi\}.$$
The functions $\psi_1,\psi_2,\phi_1,\phi_2$ are linearly independent.  Suppose that
$$f(\epsilon) = \left(\begin{array}{cc}
f_{11}(\epsilon) & f_{12}(\epsilon)\\
f_{21}(\epsilon) & f_{22}(\epsilon)
\end{array}\right)\in M_2(\bbc[\epsilon])$$
satisfies
$$\psi\cdot f(\epsilon) = \lambda\psi,\ \ \lambda = \left(\begin{array}{cc}a & b\\ c & d\end{array}\right)\in M_2(\bbc).$$
Using the linear independence of the eigenfunctions in the expression $\psi\cdot f(\epsilon) = \lambda\psi$, we find that
$$f_{12}(p(r-p+1)) = 0,\ \ f_{21}((r-p)(p+1)) = 0,\ \ f_{11}(p(r-p+1)) = f_{22}((r-p)(p+1)).$$
From this we see that 
$$D(\wt w,\nu,w)= \nu^{-1}\left\lbrace \left(\begin{array}{cc}
f_{11}(\epsilon) & f_{12}(\epsilon)\\
f_{21}(\epsilon) & f_{22}(\epsilon)
\end{array}\right) : \substack{f_{12}(p(r-p+1)) = 0,\ f_{21}((r-p)(p+1)) = 0,\\ f_{11}(p(r-p+1)) = f_{22}((r-p)(p+1))}\right\rbrace \nu.$$
Written another way,
$$D(\wt w,\nu,w) = \bbc I \oplus \nu^{-1}\mxx{q_1(\epsilon)}{q_1(\epsilon)}{q_2(\epsilon)}{q_2(\epsilon)}\nu \oplus \nu^{-1}M_2(q_1(\epsilon)q_2(\epsilon)\bbc[\epsilon])\nu.$$
for $q_1(\epsilon) = \epsilon - p(r-p+1)$ and $q_2(\epsilon) = \epsilon - (r-p)(p+1)$.
Since the leading coefficient of $\nu$ is nonsingular, conjugation by $\nu$ is order-preserving.  Furthermore, the order of the $2\times 2$ matrix $(f_{ij}(\epsilon)$ is $2\max_{ij} \deg(f_{ij})$.  Therefore we see that
$$\dim(D(\wt w,\nu, w)_i/D(\wt w,\nu, w)_{i-1}) = \left\lbrace\begin{array}{cc}
4, & \text{if $i>0$ is even}\\
0, & \text{if $i>0$ is odd}
\end{array}\right.$$
One may check by hand that there are no $\wt w$-symmetric operators of order $1$, and that the only operators of order $0$ in $D(\wt w)$ are $\bbc I$.  Thus by Proposition \ref{subalgebra proposition}, we have equality $D(\wt w) = D(\wt w,\nu, w)$.  
\end{proof}
It is evident from the presentation of the algebra $D(\wt w)$ given that $D(\wt w)$ is in fact isomorphic to the algebra found in our Hermite example (see Proposition \ref{hermite example presentation}).  In particular the center is a nodal cubic degenerating to a cuspidal cubic as $p\rightarrow n/2$.

The proof of the previous proposition shows us that $D(\wt w)$ is given by
$$D(\wt w) = \bbc I + \nu^{-1}\mxx{p(\epsilon-2)\bbc}{p(\epsilon-2)\bbc}{p(\epsilon)\bbc}{p(\epsilon)\bbc}\nu + \nu^{-1}M_2(\bbc[\epsilon]p(\epsilon)p(\epsilon-2))\nu,$$
A quick check shows that this algebra is generated by the four elements of order two.  Moreover, since the center of $M_2(\bbc[\epsilon]p(\epsilon)p(\epsilon-2))$ is $\bbc[\epsilon]p(\epsilon)p(\epsilon-2)I$, we have that the center of $D(\wt w)$ is
$$Z(\wt w) = \bbc I + \nu^{-1}\bbc[\epsilon]p(\epsilon)p(\epsilon-2)I\nu.$$
This is a commutative algebra whose spectrum is isomorphic to the nodal cubic curve $y^2 - 2(|b|^2+1)xy + |b|^2(|b|^2+2)x^2 - x^3 = 0$, via the isomorphism induced by the ring isomorphism defined by
$$\bbc[x,y]\rightarrow \bbc I + \nu^{-1}\bbc[\epsilon]p(\epsilon)p(\epsilon-2)I\nu,\ \ \ x\mapsto \nu^{-1} p(\epsilon)p(\epsilon-2)I\nu,\ y\mapsto \nu^{-1}\epsilon p(\epsilon)p(\epsilon-2)I\nu.$$
The ring $D(\wt w)$ has the structure of a module over its center, and as a module it is finitely generated and torsion-free.  However, it is not a free module over its center since
$$\mxx{0}{0}{p(\epsilon)}{0}(p(\epsilon)p(\epsilon-2)^2I) + \mxx{0}{0}{p(\epsilon)p(\epsilon-2)}{0}(-p(\epsilon)p(\epsilon-2)I) = 0I.$$
This completes our example.

\section{General Structure Results}
For the final section of the paper, we will introduce some results regarding the algebraic structure of $D(w)$ under fairly general assumptions.  To prove the desired results, we will use the fact that $D(w)$ is closed under an anti-involution $\dag$ and that $D(w)$ may be embedded into a matrix algebra and is therefore a PI-ring.  This latter fact will allow us to apply the results of \cite{sarraille1982module} and \cite{rowen1973some}.

Before proving our main result, we establish a very helpful lemma.
\begin{lem}
The algebra $D(w)$ is a semiprime $PI$-algebra.  The nonzero, $w$-symmetric elements of $D(w)$ are not nilpotent.
\end{lem}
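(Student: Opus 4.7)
My plan is to handle the three components in the order PI property $\Rightarrow$ non-nilpotency of $w$-symmetric elements $\Rightarrow$ semiprimality, since the last two share a common adjoint identity and the PI claim is almost immediate from material already developed. For the PI claim, I would invoke the eigenvalue isomorphism of Definition \ref{E(w) definition}, which realizes $D(w)$ as a subalgebra of $M_N(\bbc[n])$. By the Amitsur--Levitzki theorem, $M_N(\bbc[n])$ satisfies the standard polynomial identity $s_{2N}$, and polynomial identities are inherited by subalgebras, so $D(w)$ is PI.

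Next I would prove that a $w$-symmetric nilpotent element of $D(w)$ must vanish. The engine is the following adjoint identity: for $\eta\in D(w)$ with $\eta^\dag = \eta$ and any matrix polynomial $p$,
$$\langle p\cdot \eta,\,p\cdot \eta\rangle_w \;=\; \langle p,\,p\cdot \eta\eta^\dag\rangle_w \;=\; \langle p,\,p\cdot \eta^2\rangle_w.$$
If $\eta^2 = 0$, this forces $\langle p\cdot\eta,\,p\cdot\eta\rangle_w = 0$ for every $p$. Since $\tr\langle f,f\rangle_w = \int\tr(fwf^*)\,dx$ is a nonnegative integral that vanishes exactly when $f$ vanishes on the open support of $w$ — and hence identically when $f$ is a matrix polynomial — I conclude $p\cdot\eta = 0$ for all $p$. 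Reading off the coefficients of $\eta$ via its action on $I, xI, x^2I,\ldots$ then shows $\eta = 0$. For a general nilpotent $w$-symmetric $\eta$, I would choose $k$ with $\eta^{2^k}=0$ and apply the reduction iteratively to the $w$-symmetric elements $\eta^{2^{k-1}}, \eta^{2^{k-2}},\ldots,\eta$, each obtained as a power of $\eta$ and so automatically $w$-symmetric.

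For semiprimality, suppose $\mathfrak{n}\triangleleft D(w)$ is a nilpotent two-sided ideal and $\delta\in\mathfrak{n}$. Two-sidedness gives $\delta\delta^\dag\in\mathfrak{n}$, and this element is manifestly $w$-symmetric, while $(\delta\delta^\dag)^m\in\mathfrak{n}^m$ is eventually zero, so by the previous step $\delta\delta^\dag = 0$. The adjoint identity then yields $\langle p\cdot\delta,\,p\cdot\delta\rangle_w = \langle p,\,p\cdot\delta\delta^\dag\rangle_w = 0$ for every $p$, so $p\cdot\delta = 0$ for every $p$, hence $\delta = 0$; thus $\mathfrak{n} = 0$. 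The main delicate point in the whole argument is the nondegeneracy of $\langle\cdot,\cdot\rangle_w$ on matrix polynomials, which I expect to reduce cleanly to positive-definiteness of $w$ on its open support together with the identity theorem for polynomials; the symmetrization trick $\delta\mapsto\delta\delta^\dag$ is the essential move, and two-sidedness of $\mathfrak{n}$ is precisely what allows this combination to stay inside the ideal.
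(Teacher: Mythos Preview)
Your proposal is correct and follows essentially the same strategy as the paper: the eigenvalue isomorphism into $M_N(\bbc[n])$ for the PI property, and the adjoint identity $\langle p\cdot\delta,\,p\cdot\delta\rangle_w = \langle p,\,p\cdot\delta\delta^\dag\rangle_w$ together with positivity of the pairing for the other two claims. The only cosmetic differences are that the paper first isolates the implication ``$\delta\neq 0\Rightarrow\delta\delta^\dag\neq 0$'' as a standalone claim and then handles nilpotency via a least-$m$ argument with an even/odd case split, whereas you fold that implication into the semiprimality step and use dyadic iteration $\eta^{2^k}=0\Rightarrow\eta^{2^{k-1}}=0\Rightarrow\cdots$; both routes are equivalent.
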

\begin{proof}
Let $\{p(x,n)\}$ be a sequence of orthogonal polynomials for $w$.  We claim that if $\delta\in D(w)$ is a nonzero element, then $\delta\delta^\dag$ is also nonzero.  To see this, suppose that $\delta\neq 0$.  Then there exists an integer $n\geq 0$ such that $p(x,n)\cdot\delta\neq 0$.  It follows that
$$0\neq \langle p_n\cdot \delta,p_n\cdot \delta\rangle_w = \langle p_n\cdot\delta\delta^\dag,p_n\rangle.$$
Hence we have that $p_n\cdot\delta\delta^\dag$ is nonzero, and therefore $\delta\delta^\dag\neq 0$.

Next suppose that $\eta\in D(w)$ is a nonzero, nilpotent, $w$-symmetric element.  Then there exists a least integer $m>0$ satisfying $\eta^m = 0$.  Clearly $m>1$.  If $m$ is even, then we may write
$$0 = \eta^m = \eta^{m/2}\eta^{m/2} = \eta^{m/2}(\eta^{m/2})^\dag.$$
By the result of the previous paragraph, this means $\eta^{m/2}=0$, and since $0 < m/2 < m$ this contradicts the minimality of $m$.  Therefore $m$ must be odd.  However then $\eta^{m+1}=0$ and therefore by the same argument $\eta^{(m+1)/2} = 0$.  Since $m>1$ we have that $0 < (m+1)/2 < m$ so this again contradicts the minimality of $m$.  Since $m$ is neither even nor odd, this is a contradiction.  We conclude that nonzero, $w$-symmetric elements of $D(w)$ cannot be nilpotent.

Lastly, suppose that $\sheaf I$ is nontrivial nilpotent two-sided ideal of $D(w)$.  Since $\sheaf I$ is nontrivial, we may choose $\delta\in\sheaf I$ with $\delta\neq 0$.  Therefore $\delta\delta^\dag\in I$ is also nonzero.  However, since $I$ is nilpotent $\delta\delta^\dag$ should also be nilpotent, contradicting the result of the previous paragraph.  Hence $D(w)$ has no nontrivial, nilpotent, two-sided ideals.  This shows that $D(w)$ is semiprime.  Lastly, the fact that the eigenvalue homomorphism embeds $D(w)$ in $M_N(\bbc[n])$, combined with the fact that a matrix algebra is a PI-algebra shows that $D(w)$ is a PI-algebra.
\end{proof}

We are now ready to state and prove our general result on the structure of $D(w)$.
\begin{thm}
Suppose that $D(w)$ contains a differential operator of positive order with nonsingular leading coefficient.  Then the algebra $D(w)$ is finitely generated as a module over its center $Z(w)$ and $Z(w)$ is a reduced algebra of Krull dimension $1$.
\end{thm}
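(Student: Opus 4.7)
The strategy is to leverage the eigenvalue embedding $\Lambda: D(w) \hookrightarrow M_N(\bbc[n])$ introduced earlier together with the semiprime PI-algebra structure of $D(w)$ established in the preceding lemma, and then to invoke the module-finiteness results of Sarraille and Rowen referenced just before the theorem. The proof splits naturally into three claims: reducedness of $Z(w)$, module-finiteness of $D(w)$ over $Z(w)$, and Krull dimension one.

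Reducedness is immediate: if $z \in Z(w)$ were nilpotent, then $zD(w)$ would be a nilpotent two-sided ideal of $D(w)$, contradicting semiprimeness from the preceding lemma. For the next two parts, I will extract a well-behaved non-constant central element from the hypothesis. Given $\delta_0 \in D(w)$ of positive order with nonsingular leading coefficient, I first claim that $\Lambda(\delta_0) \in M_N(\bbc[n])$ is non-constant in $n$: otherwise $\delta_0 - \Lambda(\delta_0)I$ would annihilate every $p(x,n)$, hence annihilate the basis $\{p(x,n)\}$ of $M_N(\bbc[x])$, forcing $\delta_0 \in M_N(\bbc)$ and contradicting positive order. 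The nonsingular leading coefficient hypothesis then translates, via the computation of $\Lambda(\delta_0)$ in terms of the top $x$-coefficient of the top $\partial$-coefficient of $\delta_0$, to $\det \Lambda(\delta_0) \in \bbc[n]$ being a polynomial of positive degree. Applying Cayley--Hamilton to $\Lambda(\delta_0)$ inside the matrix ring $M_N(\bbc[n])$, the scalar matrix $\det(\Lambda(\delta_0))\,I$ is expressible as a $\bbc[n]$-polynomial in $\Lambda(\delta_0)$; invoking Cayley--Hamilton again for powers of $\delta_0$ themselves (which all lie in $E(w)$) produces an element of $E(w) \cap \bbc[n]I$ of positive degree in $n$, which pulls back to a non-constant element $z_0 \in Z(w)$.

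For module-finiteness, this central element makes $D(w)$ integral over $\bbc[z_0]$, and in particular bounds the size of $D(w)$ within $M_N(\bbc[n])$. Since $D(w)$ is semiprime PI and $Z(w)$ is Noetherian (being a subring of the Noetherian ring $\bbc[n]I$ via $\Lambda$), Sarraille's theorem together with Rowen's structure results for semiprime PI-rings yield that $D(w)$ is finitely generated as a module over its center $Z(w)$. Finally, the composite of $\Lambda$ with the projection to $Z(M_N(\bbc[n])) = \bbc[n]I \cong \bbc[n]$ embeds $Z(w)$ into $\bbc[n]$: indeed, by Posner's theorem applied to each prime quotient of $D(w)$ (using semiprimeness to reconstruct $Z(w)$ as a subring of a product of such quotients' centers, each of which embeds into $\bbc(n)$), the center $Z(w)$ sits inside a one-dimensional affine algebra and contains the transcendental element $z_0$, so $Z(w)$ is a reduced affine $\bbc$-algebra of Krull dimension exactly one.

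The principal obstacle is the bookkeeping required to certify that the Cayley--Hamilton coefficients actually land in $E(w)$ rather than merely in the ambient matrix ring, and to verify the precise hypotheses (affineness of $D(w)$ and matching PI-degrees) needed to invoke the Sarraille--Rowen theorems; the positive-order/nonsingular-leading-coefficient hypothesis is precisely what controls the growth of $D(w)$ inside $M_N(\bbc[n])$ and ensures that the standard PI-theoretic machinery applies.
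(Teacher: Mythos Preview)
Your reducedness argument is correct and in fact cleaner than the paper's: a nilpotent central element generates a nilpotent two-sided ideal, contradicting semiprimeness directly, whereas the paper routes through the $\dag$-involution and the lemma on $w$-symmetric nilpotents.

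However, the construction of a non-constant central element $z_0$ via Cayley--Hamilton has a genuine gap, not merely a bookkeeping issue as your final paragraph suggests. Cayley--Hamilton for $\Lambda(\delta_0)\in M_N(\bbc[n])$ expresses $\det(\Lambda(\delta_0))\,I$ as a polynomial in $\Lambda(\delta_0)$ with coefficients in $\bbc[n]$, but those coefficients have no reason to lie in $E(w)$, and hence neither does $\det(\Lambda(\delta_0))\,I$ itself. Your follow-up about ``invoking Cayley--Hamilton again for powers of $\delta_0$'' does not repair this: the characteristic polynomials of the powers still have coefficients merely in $\bbc[n]$, and you have supplied no mechanism forcing a nonconstant scalar matrix into $E(w)$. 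Relatedly, your later claim that $\Lambda$ carries $Z(w)$ into $\bbc[n]I$ is unjustified: centrality in $E(w)$ does not imply centrality in the ambient $M_N(\bbc[n])$, and indeed the paper's \emph{next} theorem expends real effort to prove exactly this inclusion in the special case $N=2$. (Also, ``subring of a Noetherian ring is Noetherian'' is false in general; the conclusion does happen to hold for $\bbc$-subalgebras of $\bbc[n]$, but for a different reason.) Finally, ``nonsingular leading coefficient'' means $\det b_\ell(x)\not\equiv 0$ as a polynomial in $x$; it does not force the top $x$-coefficient of $b_\ell$ to be invertible, so your inference that $\det\Lambda(\delta_0)$ has positive degree in $n$ is also not justified.

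The paper sidesteps all of this by never producing a central element. It bounds the Krull dimension of $D(w)$ above by the GK-dimension of $M_N(\bbc[n])$, which is $1$ via Morita equivalence with $\bbc[n]$. For the lower bound it passes to a $w$-symmetric operator of positive order with nonsingular leading coefficient; its powers then have strictly increasing order, so it is transcendental over $\bbc$ and the Krull dimension is exactly $1$. Sarraille's theorem (semiprime PI of Krull dimension one is module-finite over its center) is then invoked directly, and the Krull dimension of $Z(w)$ is read off from that of $D(w)$. No embedding of $Z(w)$ into $\bbc[n]$ is needed at this stage.
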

\begin{proof}
The Krull dimension of $D(w)$ is bounded by the GK-dimension of $D(w)$.  Consider the image $E(w)$ of $D(w)$ under the eigenvalue homomorphism $\Sigma :D(w)\cong E(w)\subseteq M_N(\bbc[n])$.  The Krull dimension of $E(w)$ is bounded by the GK-dimension of $E(w)$ as a graded vector space, graded by degree in the variable $n$.  Since $E(w)$ is a graded subalgebra of $M_N(\bbc[n])$, the GK-dimension of $E(w)$ is further bounded by the GK-dimension of $M_N(\bbc[n])$.  However, this latter algebra is Morita equivalent to $\bbc[n]$ and therefore has the same GK-dimension as $\bbc[n]$.  Since the GK-dimension and Krull dimension of a commutative ring agree, this means that the GK-dimension of $M_N(\bbc[n])$ is one.  Therefore the Krull dimension of $D(w)$ is at most one.  Since $D(w)$ contains a differential operator of order at least one, it must contain a $w$-symmetric differential operator $\delta$ of positive order $d$ with nonsingular leading coefficient.  Then $\delta^n$ has order $nd$ for each integer $n>0$, and it follows that $\delta$ is transcendental over $\bbc$.  Hence the Krull dimension of $D(w)$ is at least $1$.  We conclude that the Krull dimension of $D(w)$ is exactly $1$.  Combining this with the fact that $D(w)$ is a semi-prime PI-algebra, the main result of \cite{sarraille1982module} tells us that $D(w)$ is finitely generated as a module over its center.  Since $D(w)$ has Krull dimension $1$, it follows that $Z(w)$ also has Krull dimension $1$.

To show that $Z(w)$ is reduced, suppose that $\delta\in Z(w)$.  Then $\delta^\dag\in Z(w)$.  To see this, suppose that $\eta\in Z(w)$.  Then $\eta^\dag\delta = \delta\eta^\dag$ conjugating everything we find $\delta^\dag\eta = \eta\delta^\dag$.  Since $\eta\in D(w)$ was arbitrary, this shows that $\eta\in Z(w)$.  It follows that $\delta\delta^\dag\in Z(w)$, and by our previous lemma $\delta\delta^\dag$ is not nilpotent.  Since $(\delta\delta^\dag)^m = \delta^m(\delta^\dag)^m$, it follows that $\delta^m$ is also not nilpotent.  Since $\delta\in Z(w)$ was arbitrary, this shows that $Z(w)$ is reduced.
\end{proof}

In the specific case that $N=2$, we can say even more about the center $Z(w)$ of $D(w)$, namely that it is rational.
\begin{thm}
Suppose that $D(w)$ contains a differential operator of positive order with nonsingular leading coefficient.  If $D(w)$ is noncommutative and $N = 2$, then the center $Z(w)$ of $D(w)$ has a spectrum isomorphic to a rational curve.
\end{thm}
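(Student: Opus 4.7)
The plan is to show that the eigenvalue homomorphism $\Lambda$ restricts to an embedding of $Z(w)$ into the scalar polynomial matrices $\bbc[n]\cdot I$, after which L\"uroth's theorem delivers rationality. By the previous theorem, $Z(w)$ is a reduced algebra of Krull dimension one and $D(w)$ is finitely generated over it. Since $N=2$, the embedding $\Lambda\colon D(w)\hookrightarrow M_2(\bbc[n])$ makes $D(w)$ a PI-algebra of PI-degree at most two; as $D(w)$ is noncommutative, the PI-degree is exactly two.

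Set $E(w):=\Lambda(D(w))$ and let $\bar{E}:=\bbc(n)\cdot E(w)$ be the $\bbc(n)$-subalgebra of $M_2(\bbc(n))$ it generates. The central claim is that $\bar{E}=M_2(\bbc(n))$. The $\bbc(n)$-dimension of $\bar{E}$ lies in $\{1,2,3,4\}$. Dimensions one and two are excluded because every such subalgebra of $M_2(\bbc(n))$ is commutative, whereas $\bar{E}\supseteq E(w)$ inherits noncommutativity from $D(w)$. To exclude the three-dimensional case, extend scalars to $\overline{\bbc(n)}$. A short module-theoretic argument using Burnside's density theorem (or a direct analysis of the action on $\overline{\bbc(n)}^2$) shows that any three-dimensional $\overline{\bbc(n)}$-subalgebra of $M_2(\overline{\bbc(n)})$ containing $I$ is conjugate to the algebra of upper-triangular matrices, and in particular has a nonzero Jacobson radical (the strictly upper-triangular part). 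Since $\bbc(n)$ has characteristic zero and is perfect, the Jacobson radical is compatible with base change, giving a nonzero nilpotent two-sided ideal $\mathcal{N}$ of $\bar{E}$.

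The final step of the exclusion is the observation that $\mathcal{N}\cap E(w)$ is a nonzero nilpotent two-sided ideal of $E(w)$: given a nonzero $\alpha\in\mathcal{N}$, write $\alpha=(1/g(n))e$ with $e\in E(w)$ and $g(n)\in\bbc[n]$; then $e=g(n)\alpha\in\mathcal{N}$ because $\mathcal{N}$ is $\bbc(n)$-stable, and $(\mathcal{N}\cap E(w))^k\subseteq \mathcal{N}^k=0$. This contradicts the semiprimeness of $D(w)$ established earlier via the $\dag$-involution. Hence $\bar{E}=M_2(\bbc(n))$.

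Given this, for any $z\in Z(w)$ the image $\Lambda(z)$ commutes with all of $\bar{E}=M_2(\bbc(n))$, so $\Lambda(z)\in Z(M_2(\bbc(n)))=\bbc(n)\cdot I$; intersecting with $M_2(\bbc[n])$ forces $\Lambda(z)\in\bbc[n]\cdot I$. Thus $\Lambda$ restricts to an injection $Z(w)\hookrightarrow \bbc[n]$, so $Z(w)$ is a domain and $\mathrm{Frac}(Z(w))$ is a subfield of $\bbc(n)$ containing $\bbc$ of transcendence degree one (matching the Krull dimension). L\"uroth's theorem gives $\mathrm{Frac}(Z(w))\cong \bbc(t)$, so $\mathrm{Spec}(Z(w))$ is an irreducible affine curve with rational function field, i.e., a rational curve. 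The main obstacle is the exclusion of the three-dimensional case, which requires combining the classification of small subalgebras of $M_2$ over an algebraically closed field, the descent of the Jacobson radical under perfect base change, and the semiprimeness of $D(w)$ coming from the $\dag$-involution.
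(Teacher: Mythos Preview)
Your overall strategy matches the paper's: embed $Z(w)$ into $\bbc[n]\cdot I$ via $\Lambda$ and then invoke L\"uroth. The paper reaches the key step (that $\bar E=M_2(\bbc(n))$) more directly. It picks two noncommuting $w$-symmetric elements $\delta_1,\delta_2\in D(w)$, observes that $\theta=i(\delta_1\delta_2-\delta_2\delta_1)$ is nonzero and $w$-symmetric, hence not nilpotent by the preceding lemma. The image $t(n)=\Lambda(\theta)$ is trace-free and non-nilpotent, so $\det t(n)\not\equiv 0$; therefore $d_1(n),d_2(n)$ generate $M_2(\bbc)$ for all but finitely many $n$, forcing $\Lambda(Z(w))\subseteq\bbc[n]\cdot I$.

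Your more structural route has a genuine gap in the three-dimensional exclusion. You assert that a nonzero $\alpha\in\mathcal N$ can be written as $(1/g(n))e$ with $e\in E(w)$. But $\bar E=\bbc(n)\cdot E(w)$ is only the $\bbc(n)$-linear span of $E(w)$; clearing a common denominator in a finite sum $\alpha=\sum f_i e_i$ yields $g(n)\alpha\in\bbc[n]\cdot E(w)$, not $E(w)$. You have not shown that $E(w)$ is a $\bbc[n]$-module (equivalently that $nI\in E(w)$), and nothing in the setup guarantees this. Consequently the argument that $\mathcal N\cap E(w)\neq 0$ is incomplete.

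The gap is easy to close, and the fix is essentially the paper's idea one level down: if $\dim_{\bbc(n)}\bar E=3$, then $\bar E/\mathcal N$ is two-dimensional over $\bbc(n)$ and hence commutative, so every commutator of elements of $E(w)$ already lies in $\mathcal N$. Since $E(w)$ is noncommutative, some such commutator is nonzero, giving $0\neq[e_1,e_2]\in\mathcal N\cap E(w)$ and the desired contradiction with semiprimeness. In hindsight, the subalgebra classification, base change, and radical-descent machinery are avoidable: the single observation that a nonzero commutator in $E(w)$ is non-nilpotent (via the $\dag$-involution) already rules out $\bar E$ being contained in a Borel and forces $\bar E=M_2(\bbc(n))$.
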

\begin{proof}
The previous theorem tells us that the spectrum of $Z(w)$ is a reduced (affine) curve.  To prove that $Z(w)$ is rational, L\"{u}roth's Theorem tells us that it suffices to find a ring monomorphism $Z(w)\hookrightarrow\bbc[n]$.  We will do so by using the eigenvalue homomorphism $\Sigma: D(w)\cong E(w)\subseteq M_N(\bbc[n])$.  Since $D(w)\neq Z(w)$ we may choose operators $\delta_1,\delta_2\in D(w)$ such that $\delta_1\delta_2\neq\delta_2\delta_1$.  Moreover we may choose $\delta_1$ and $\delta_2$ to be $w$-symmetric.  Then $\theta := i(\delta_1\delta_2-\delta_2\delta_1)$ is a nonzero $w$-symmetric differential operator and hence cannot be nilpotent.  Setting $d_i(n) := \Lambda(\delta_i)$ and $t(n) := \Lambda(\theta)$, we see that $t(n)= i(d_1(n)d_2(n) - d_2(n)d_1(n))$ is a trace-free, non-nilpotent matrix in $M_N(\bbc[n])$.  Since $t(n)$ is trace-free, $t(n)^2 = \det(t(n))$.  Thus since $t(n)$ is not nilpotent, $\det(t(n)))$ is not identically zero.  Thus for all but finitely many values of $n$ $d_1(n)d_2(n)-d_2(n)d_1(n)$ is a nonsingular matrix.  Therefore $d_1(n),d_2(n)$ generates the full matrix ring $M_2(\bbc)$ for all but finitely many values of $n$ by \cite{aslaksen2009generators}.  It follows that anything in $\Lambda(Z(w))$ must commute with all of $M_2(\bbc)$, and therefore that $\Lambda(Z(w))\subseteq \bbc[n]I$.  Hence $Z(w)$ has rational spectrum.
\end{proof}

\section{Conclusions}
The goal of this paper is to provide some insight into the rich algebraic structure of algebras of matrix differential operators associated to weight matrices.  We have demonstrated a method of constructing numerous examples of solutions to Bochner's problem for matrix differential operators from known solutions.  Using our method, we have also demonstrated how one may explicitly calculate the associated sequence of monic orthogonal polynomials, as well as the corresponding algebra of differential operators.

We've also included some general results regarding the algebraic structure of $D(w)$.  There are many interesting unanswered questions in this vein that an algebraically inclined individual could explore.  Examples include
\begin{itemize}
\item  Is $D(w)$ equal to the centrilizer in $\mweyl$ of one of its elements?
\item  Is $D(w)$ isomorphic to the endomorphism ring of a module over its center?
\item  Is $D(w)$ maximal among finitely generated algebras over $Z(w)$ with center $Z(w)$?
\item  When is the center of $D(w)$ rational?
\item  Is there a stacky picture we can associate to $D(w)$, akin to what we do for hereditary orders?
\end{itemize}
We hope to address some of these questions in future work.

\section*{Acknowledgement}
During the course of this paper, the author had numerous helpful discussions with Max Lieblich and S. Paul Smith.  The author would especially like to thank S. Paul Smith for bringing the papers \cite{tirao2011} and \cite{castro2006} to the authors attention, and for posing the original question of simplifying the results therein.  The author would like to thank Erik Koelink and Pablo Roman for pointing out several errors in an earlier version of this paper, and for suggesting the application of the theory developed therein to the $2\times 2$ Gegenbauer weight matrix studied by Ignacio Zurri\'{a}n in \cite{zurrian2015}\cite{zurrian2016algebra}.  The author would like to thank F. Alberto Gr\"{u}nbaum for his encouragement and remarks on an earlier draft.  The author would also like to thank the work of two anonymous referees, whose remarks helped greatly improved a previous version of this paper.

\bibliographystyle{plain}
\bibliography{omp}

\end{document}